\newtheorem{theorem}{Theorem}
\newtheorem{corollary}[theorem]{Corollary}
\newtheorem{definition}[theorem]{Definition}
\newtheorem{example}[theorem]{Example}
\newtheorem{lemma}[theorem]{Lemma}
\newtheorem{proposition}[theorem]{Proposition}
\newtheorem{remark}[theorem]{Remark}
\begin{document}

\title[The structure of completely meet irreducible congruences...]{The structure of completely meet irreducible congruences in strongly Fregean algebras}
\author{Katarzyna S\l omczy\'{n}ska}
\address{Institute of Mathematics, Pedagogical University \\ Podchor\k{a}\.{z}ych 2, 30-084 Krak\'{o}w}
\email{kslomcz@up.krakow.pl}

\begin{abstract}
{A strongly Fregean algebra is an algebra such that the class of its homomorphic images is Fregean and the variety generated by this algebra is congruence modular. To understand the structure of these algebras we study the prime intervals projectivity relation in the posets of their completely meet irreducible congruences and show that its cosets have natural structure of Boolean group. In particular, this approach allows us to represent congruences and elements of such algebras as the subsets of upward closed subsets of these posets with some special properties.}
\end{abstract}

\maketitle

\section{Introduction}

The word Fregean comes from Frege's idea that sentences should denote their
logical values. This idea was an inspiration for Pigozzi \cite{Pig91}, who
transferred it to the field of universal algebra, see \cite{IdzSloWro97},
\cite{Fonetal03}, \cite{CzePig04} ,\cite{CzePig04A} and \cite{IdzSloWro09}%
\ for thorough historical discussion. A \textsl{Fregean algebra} is as an
algebra $\mathbf{A}$ with a distinguished constant term $1$ satisfying two
axioms: $\Theta_{\mathbf{A}}\left(  1,a\right)  =\Theta_{\mathbf{A}}\left(
1,b\right)  $ implies $a=b$ for $a,b\in A$ (congruence orderability), and
$1/\alpha=1/\beta$ implies $\alpha=\beta$ for $\alpha,\beta\in\mathsf{Con}%
\left(  \mathbf{A}\right)  $ ($1$-regularity). A class of algebras of the same
type is called Fregean if all algebras in this class are Fregean with respect
to the common constant term $1$. Many natural examples of \textsl{Fregean
varieties}, like Boolean algebras, Heyting algebras, Brouwerian semilattices,
Boolean groups, equivalential algebras (i.e., the equivalential subreducts of
Heyting algebras), equivalential algebras with negation, Hilbert algebras or
Hilbert algebras with supremum, come from the algebraization of fragments of
classical or intuitionistic logics. Fregean varieties are congruence modular,
but not necessarily congruence permutable: Hilbert algebras can serve as an
example here. To see that this distinction is important, note that the
structure of congruence permutable Fregean varieties is quite well understood,
in particular, it was proved in \cite{IdzSloWro09} that every congruence
permutable Fregean variety consists of algebras that are expansions of
equivalential algebras. On the other hand, the structure of Fregean algebras
in general remains elusive.

The main technical tool used in analyzing the structure of Fregean algebras
was \textsl{prime intervals projectivity} (\textsl{PIP}) relation. (Note,
however, that in the congruence distributive case this relation is trivial.)
In \cite{Slo96} and \cite{IdzSloWro09} we used this relation in the set of
\textsl{join} irreducible elements of congruence lattices of finite algebras
from congruence permutable Fregean varieties to understand their structure.
However, in a series of papers \cite{Slo05}, \cite{Slo08} and \cite{Slo17} we
apply the same relation, but this time in the set of completely \textsl{meet}
irreducible elements of congruence lattice, in particular, to construct free
algebras in the varieties under consideration. This perspective is, in a
sense, dual to the former, though substantially different, since the posets of
join and meet irreducible elements of the congruence lattice need not to be
isomorphic, and the latter approach can be applied also in the infinite case,
in contrast to the former one.

In the present paper we show that one can apply this technique to a broad
class of Fregean algebras also out of the congruence permutability realm. This
is surprising, since we do not know almost nothing about the language of
algebra in this situation, in contrast to the congruence permutable case, when
we are at least sure that it contains equivalence operation. Namely, we shall
consider \textsl{strongly Fregean algebras}, i.e., the algebras such that the
classes of their homomorphic images are Fregean and the varieties generated by
these algebras are congruence modular. This class lies in between Fregean
algebras and Fregean varieties, as every algebra from a Fregean variety must
be strongly Fregean. We know from \cite{IdzSloWro09}\ that strongly Fregean
algebras fulfill the (SC1) condition isolated in \cite{IdzSlo01}. This
condition simplifies studying the PIP relation in the poset of completely meet
irreducible congruences significantly, in particular, it allows us to
introduce the structure of Boolean group into the PIP equivalence classes
(Theorem \ref{Boolean}) and to comprehend their location in the congruence
lattice (Theorem \ref{location}). Moreover, for finite strongly Fregean
algebras we get a formula for the length of the congruence lattice as the sum
of dimensions of these classes treated as vector spaces over the field
$\mathbb{Z}_{2}$ (Theorem~\ref{dimension}).

In the set of upward closed subsets of the poset of completely meet
irreducible congruences of a strongly Fregean algebra $\mathbf{A}$ we can
distinguish two subsets $\mathcal{H}\left(  \mathbf{A}\right)  $ and
$\mathcal{S(}\mathbf{A})$. The larger subset $\mathcal{S(}\mathbf{A})$
consists of sets that intersected with any PIP equivalence class form its
subgroup, whereas the smaller subset $\mathcal{H}\left(  \mathbf{A}\right)  $
comprises the sets $Z$ such that for each PIP equivalence class such that all
the elements larger than the elements of this class are contained in $Z$, the
intersection of $Z$ with this class is either a maximal subgroup of this class
or is equal to this class. The natural map that sends a congruence into the
set of all larger completely meet irreducible congruences gives us an
embedding of $\mathsf{Con}\left(  \mathbf{A}\right)  $ into $\mathcal{S(}%
\mathbf{A})$ that is surjective for finite algebras (Theorem \ref{S(A)}). The
same map sends principle congruences into $\mathcal{H}\left(  \mathbf{A}%
\right)  $ (Proposition \ref{hyperplane}), closed under the natural
equivalence operation (Theorem \ref{H(A)eq}). Much more can be said if
$\mathbf{A}$ is additionally endowed with a principle congruence term (or,
which is here the same, a Malcev term) and, in consequence, its reduct is an
equivalential algebra. Then the natural embedding preserves equivalence
operation (Proposition \ref{H(A)}). Moreover, we show that a finite congruence
orderable algebra with a Malcev term generates a Fregean variety, and in this
case there is a one-to-one correspondence between $A$ and $\mathcal{H}\left(
\mathbf{A}\right)  $ preserving equivalence operation (Theorem \ref{main}).

\section{Prime Intervals Projectivity (PIP) Relation}

Let $\mathbf{A}$ be an algebra such that its congruence lattice $\mathsf{Con}%
\left(  \mathbf{A}\right)  $ is modular. We denote by $\mathsf{Cm}\left(
\mathbf{A}\right)  $ the set of all completely meet irreducible congruences of
$\mathbf{A}$. For each $\eta$ of $\mathsf{Cm}\left(  \mathbf{A}\right)  $
there is a unique congruence $\eta^{+}\in\mathsf{Con}\left(  \mathbf{A}%
\right)  $ such that $\alpha\geq\eta^{+}$ whenever $\alpha>\eta$ for
$\alpha\in\mathsf{Con}\left(  \mathbf{A}\right)  $. In \cite{IdzSlo01} we
considered an equivalence relation \textsl{PIP} (\textsl{prime intervals
projectivity}, denoted by $\sim$) on $\mathsf{Cm}\left(  \mathbf{A}\right)  $
by putting for \linebreak $\varphi,\psi\in\mathsf{Cm}\left(  \mathbf{A}\right)  $:%
\begin{align*}
\varphi &  \sim\psi  \thinspace \thinspace \thinspace   \text{     if and only if     } \thinspace
\text{the prime intervals }I\left[  \varphi,\varphi^{+}\right]  \text{ and
}I\left[  \psi,\psi^{+}\right]  \text{ are projective,}%
\end{align*}
which has been studied for many years in the theory of modular lattices
\cite{Aic18}.

Observe that whenever $I[\alpha_{1},\beta_{1}]\nearrow I[\alpha_{2},\beta
_{2}]\searrow I[\alpha_{3},\beta_{3}]$ holds for three prime intervals in
$\mathsf{Con}(\mathbf{A})$, then there is $\mu\in\mathsf{Cm}(\mathbf{A})$ with
$I[\alpha_{1},\beta_{1}]\nearrow I[\mu,\mu^{+}]\searrow I[\alpha_{3},\beta
_{3}].$ Indeed, it is enough to pick a completely meet irreducible congruence
$\mu\geq\alpha_{2}$ with $\mu\ngeqslant\beta_{2}$. Hence for $\varphi,\psi
\in\mathsf{Cm}(\mathbf{A})$, $\varphi\sim\psi$ there exists $\alpha_{1}%
,\ldots,\alpha_{n},\beta_{1},\ldots,\beta_{n}\in\mathsf{Con}(\mathbf{A})$ and
$\mu_{1},\ldots,\mu_{n-1}\in\mathsf{Cm}(\mathbf{A})$ such that%
\begin{align}
I[\varphi,\varphi^{+}]  &  \searrow I[\alpha_{1},\beta_{1}]\nearrow I[\mu
_{1},\mu_{1}^{+}]\searrow\ldots\nonumber\\
&  \nearrow I[\mu_{n-1},\mu_{n-1}^{+}]\searrow I[\alpha_{n},\beta_{n}]\nearrow
I[\psi,\psi^{+}]\text{.} \tag{*}%
\end{align}
(in particular, for $n=1$, we put $I[\varphi,\varphi^{+}]\searrow I[\alpha
_{1},\beta_{1}]\nearrow I[\psi,\psi^{+}]$).
\smallskip

In the sequel the following theorem from \cite[Lemma 22]{IdzSlo01} will be useful.

\begin{theorem}
\label{ddecomposition}Let $\mathbf{A}$ be an algebra with modular congruence
lattice. Then for all $\alpha,\beta\in\mathsf{Con}\left(  \mathbf{A}\right)  $ and
$\eta\in\mathsf{Cm}\left(  \mathbf{A}\right)  $ with $\alpha\wedge\beta
\leq\eta$ there are $\mu_{1},\mu_{2}\in\eta/\!\sim\cup\left\{  \mathbf{1}%
_{\mathbf{A}}\right\}  $ such that $\alpha\leq\mu_{1},\beta\leq\mu_{2}$ and
$\mu_{1}\wedge\mu_{2}\leq\eta$, where $\mathbf{1}_{\mathbf{A}}$ denotes the
largest element in $\mathsf{Con}\left(  \mathbf{A}\right)  $.
\end{theorem}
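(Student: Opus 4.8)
The plan is to treat the degenerate configurations directly and then to focus on the case $\alpha\not\le\eta$, $\beta\not\le\eta$. If $\beta\le\eta$ we may take $\mu_2=\eta\in\eta/\!\sim$ and $\mu_1=\mathbf{1}_{\mathbf A}$, for then $\mu_1\wedge\mu_2=\eta\le\eta$ and $\alpha\le\mathbf{1}_{\mathbf A}$; the case $\alpha\le\eta$ is symmetric, and if both hold we take $\mu_1=\mu_2=\eta$. So assume $\alpha,\beta\not\le\eta$. Here $\mathbf{1}_{\mathbf A}$ is of no use: choosing $\mu_1=\mathbf{1}_{\mathbf A}$ would force $\mu_2\le\eta$, contradicting $\mu_2\ge\beta\not\le\eta$. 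Thus both $\mu_1$ and $\mu_2$ must be produced inside the class $\eta/\!\sim$, and this is where the argument has content. It is convenient to first replace $\mathsf{Con}(\mathbf A)$ by the interval $[\alpha\wedge\beta,\mathbf{1}_{\mathbf A}]$: it is modular, $\eta$ is still completely meet irreducible in it with the same cover $\eta^+$, and its completely meet irreducible elements are exactly those of $\mathsf{Con}(\mathbf A)$ lying above $\alpha\wedge\beta$; so we may as well assume $\alpha\wedge\beta$ is the least congruence $\mathbf{0}_{\mathbf A}$.

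Next I would push the prime interval $I[\eta,\eta^+]$ down and then up toward $\alpha$. Since $\beta\not\le\eta$ we have $\eta\vee\beta\ge\eta^+$, so by modularity $y\mapsto y\wedge\beta$ is an isomorphism of $I[\eta,\eta^+]$ onto $I[\eta\wedge\beta,\eta^+\wedge\beta]$, that is $I[\eta,\eta^+]\searrow I[\eta\wedge\beta,\eta^+\wedge\beta]$. Joining with $\alpha$ gives the transposition $I[\eta\wedge\beta,\eta^+\wedge\beta]\nearrow I[(\eta\wedge\beta)\vee\alpha,(\eta^+\wedge\beta)\vee\alpha]$, which is an isomorphism exactly when $(\eta^+\wedge\beta)\wedge\alpha\le\eta\wedge\beta$. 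This is the single point at which the hypothesis enters, and it enters decisively: $(\eta^+\wedge\beta)\wedge\alpha\le\alpha\wedge\beta\le\eta$ and $\le\beta$, whence $\le\eta\wedge\beta$. Therefore $I[(\eta\wedge\beta)\vee\alpha,(\eta^+\wedge\beta)\vee\alpha]$ is a prime interval, projective to $I[\eta,\eta^+]$, whose lower endpoint lies above $\alpha$. Interchanging the roles of $\alpha$ and $\beta$ yields a prime interval $I[(\eta\wedge\alpha)\vee\beta,(\eta^+\wedge\alpha)\vee\beta]$, projective to $I[\eta,\eta^+]$, with lower endpoint above $\beta$.

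Now I would invoke the observation preceding the statement. Applied to the first prime interval it lets me pick $\mu_1\in\mathsf{Cm}(\mathbf A)$ with $\mu_1\ge(\eta\wedge\beta)\vee\alpha$ and $\mu_1\not\ge(\eta^+\wedge\beta)\vee\alpha$; then $I[(\eta\wedge\beta)\vee\alpha,(\eta^+\wedge\beta)\vee\alpha]\nearrow I[\mu_1,\mu_1^+]$ with $\mu_1\wedge((\eta^+\wedge\beta)\vee\alpha)=(\eta\wedge\beta)\vee\alpha$, and chaining the three transpositions gives $\mu_1\sim\eta$, while $\mu_1\ge\alpha$ by construction. Symmetrically I obtain $\mu_2\in\mathsf{Cm}(\mathbf A)$ with $\mu_2\ge\beta$ and $\mu_2\sim\eta$. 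Thus $\mu_1,\mu_2\in\eta/\!\sim$ with $\alpha\le\mu_1$ and $\beta\le\mu_2$, and everything except the meet condition is in place.

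The delicate and, I expect, hardest step is to secure $\mu_1\wedge\mu_2\le\eta$. The observation pins down $\mu_1$ and $\mu_2$ only up to the choice of a completely meet irreducible lying over the bottom and off the top of each prime interval, and an uncoordinated choice genuinely can fail: already in the subgroup lattice of $\mathbb{Z}_2^{3}$ one finds admissible pairs whose meet exceeds $\eta$ alongside admissible pairs whose meet lies below $\eta$. The freedom in the observation must therefore be spent to make $I[\mu_1,\mu_1^+]$ and $I[\mu_2,\mu_2^+]$ reach $I[\eta,\eta^+]$ through a common transposition --- informally, to make $\mu_1$ and $\mu_2$ sever the prime quotient $I[\eta,\eta^+]$ along the same element of $\eta$, so that $\mu_1\wedge\mu_2$ collapses into $\eta$. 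Concretely, the plan is to fix $\mu_2$ first and then to choose $\mu_1$ over $I[(\eta\wedge\beta)\vee\alpha,(\eta^+\wedge\beta)\vee\alpha]$ subject to the additional demand that $\mu_1$ omit the join that would carry $\mu_1\wedge\mu_2$ above $\eta$; the inequality $\mu_1\wedge\mu_2\le\eta$ would then be checked by the modular law from the two identities $\mu_1\wedge((\eta^+\wedge\beta)\vee\alpha)=(\eta\wedge\beta)\vee\alpha$ and $\mu_2\wedge((\eta^+\wedge\alpha)\vee\beta)=(\eta\wedge\alpha)\vee\beta$ together with $\alpha\wedge\beta\le\eta$. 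Showing that an admissible $\mu_1$ survives this extra constraint is the crux; the preliminary reduction to $\alpha\wedge\beta=\mathbf{0}_{\mathbf A}$ should make this final bookkeeping as light as possible.
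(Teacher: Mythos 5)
First, a bookkeeping point: the paper contains no proof of this statement at all — it is imported verbatim from \cite[Lemma 22]{IdzSlo01} — so your argument has to be judged on its own, not against an internal proof.

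Judged that way, it has a genuine gap, located exactly where you say it is. Everything up to the production of $\mu_{1}$ and $\mu_{2}$ is correct: the degenerate cases, the harmless reduction to the interval $[\alpha\wedge\beta,\mathbf{1}_{\mathbf{A}}]$, the transpositions $I[\eta,\eta^{+}]\searrow I[\eta\wedge\beta,\eta^{+}\wedge\beta]\nearrow I[(\eta\wedge\beta)\vee\alpha,(\eta^{+}\wedge\beta)\vee\alpha]$ (with $(\eta^{+}\wedge\beta)\wedge\alpha\le\eta\wedge\beta$ being precisely where $\alpha\wedge\beta\le\eta$ enters), and the extraction of completely meet irreducible congruences over the resulting prime intervals. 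Your observation that uncoordinated choices of $\mu_{1},\mu_{2}$ can genuinely violate $\mu_{1}\wedge\mu_{2}\le\eta$ is also accurate. But that inequality is the entire content of the lemma (it is what powers Proposition \ref{distributive} and Proposition \ref{baslem}), and your final paragraph does not prove it: it announces a strategy ("choose $\mu_{1}$ to omit the join that would carry $\mu_{1}\wedge\mu_{2}$ above $\eta$") without defining the constraint, without showing an admissible $\mu_{1}$ satisfying it exists, and while explicitly labelling this step as the unresolved crux. A proof whose essential step is declared open is not a proof.

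For the record, your plan can be completed, and the completion shows concretely what was missing: after fixing $\mu_{2}$, the second transposition must run through $\mu_{2}$ itself rather than through $\beta$. Fix $\mu_{2}\in\mathsf{Cm}(\mathbf{A})$ over $(\eta\wedge\alpha)\vee\beta$ avoiding $(\eta^{+}\wedge\alpha)\vee\beta$; since that interval is prime, $\mu_{2}\wedge((\eta^{+}\wedge\alpha)\vee\beta)=(\eta\wedge\alpha)\vee\beta$, and hence, by modularity and $\alpha\wedge\beta\le\eta$, one gets the key inequality $\alpha\wedge\mu_{2}\wedge\eta^{+}\le\alpha\wedge((\eta\wedge\alpha)\vee\beta)=(\eta\wedge\alpha)\vee(\alpha\wedge\beta)\le\eta$. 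Since $\mu_{2}\ge\beta\not\le\eta$ we have $I[\eta,\eta^{+}]\searrow I[\eta\wedge\mu_{2},\eta^{+}\wedge\mu_{2}]$, and the key inequality is exactly what legitimizes the up-transposition $I[\eta\wedge\mu_{2},\eta^{+}\wedge\mu_{2}]\nearrow I[(\eta\wedge\mu_{2})\vee\alpha,(\eta^{+}\wedge\mu_{2})\vee\alpha]$. Now choose $\mu_{1}\in\mathsf{Cm}(\mathbf{A})$ over the bottom of this prime interval avoiding its top; then $\mu_{1}\sim\eta$, $\alpha\le\mu_{1}$, and $\mu_{1}\wedge\mu_{2}\wedge\eta^{+}\le((\eta\wedge\mu_{2})\vee\alpha)\wedge(\mu_{2}\wedge\eta^{+})=(\eta\wedge\mu_{2})\vee(\alpha\wedge\mu_{2}\wedge\eta^{+})\le\eta$, using modularity again. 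Finally, $\mu_{1}\wedge\mu_{2}\wedge\eta^{+}\le\eta$ upgrades to $\mu_{1}\wedge\mu_{2}\le\eta$: otherwise $\eta\vee(\mu_{1}\wedge\mu_{2})\ge\eta^{+}$ since $\eta$ is completely meet irreducible, whence $\eta^{+}=\eta^{+}\wedge(\eta\vee(\mu_{1}\wedge\mu_{2}))=\eta\vee(\eta^{+}\wedge\mu_{1}\wedge\mu_{2})\le\eta$, a contradiction. This coordination of $\mu_{1}$ with the already-chosen $\mu_{2}$, plus the final modular upgrade, is the content your proposal lacks.
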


Applying this theorem, we see that the relation PIP can be used to
characterize distributivity property of the congruence lattice of $\mathbf{A}$.

\begin{proposition}
\label{distributive}The following conditions are equivalent for an algebra
$\mathbf{A}$ with modular congruence lattice:

\begin{enumerate}
\item[(i)] $\mathsf{Con}\left(  \mathbf{A}\right)  $ is distributive;

\item[(ii)] $\varphi/\!\sim\,=\left\{  \varphi\right\}  $ for every
$\varphi\in\mathsf{Cm}\left(  \mathbf{A}\right)  $.
\end{enumerate}
\end{proposition}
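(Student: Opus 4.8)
The plan is to route both implications through the observation that, in this setting, distributivity of $\mathsf{Con}(\mathbf{A})$ is equivalent to \emph{meet-primeness} of every completely meet irreducible congruence, i.e. the condition that $\alpha\wedge\beta\leq\eta$ forces $\alpha\leq\eta$ or $\beta\leq\eta$ for each $\eta\in\mathsf{Cm}(\mathbf{A})$. I will use freely the Birkhoff-type fact that $\mathsf{Con}(\mathbf{A})$, being an algebraic lattice, has every element equal to the meet of the completely meet irreducible congruences above it; this is what lets the argument avoid any finiteness hypothesis. The equivalence ``distributive iff all $\eta\in\mathsf{Cm}(\mathbf{A})$ are meet-prime'' is the linking lemma. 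Its forward half is the standard computation $\eta=\eta\vee(\alpha\wedge\beta)=(\eta\vee\alpha)\wedge(\eta\vee\beta)$ together with meet-irreducibility of $\eta$; its backward half follows since, for any $\alpha,\beta,\gamma$ and any completely meet irreducible $\eta\geq(\alpha\wedge\beta)\vee(\alpha\wedge\gamma)$, meet-primeness forces $\alpha\wedge(\beta\vee\gamma)\leq\eta$, whence intersecting over all such $\eta$ yields the distributive inequality.

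For (ii)$\Rightarrow$(i) I would fix $\eta\in\mathsf{Cm}(\mathbf{A})$ and $\alpha,\beta$ with $\alpha\wedge\beta\leq\eta$, and apply Theorem \ref{ddecomposition} to obtain $\mu_{1},\mu_{2}\in\eta/\!\sim\cup\{\mathbf{1}_{\mathbf{A}}\}$ with $\alpha\leq\mu_{1}$, $\beta\leq\mu_{2}$ and $\mu_{1}\wedge\mu_{2}\leq\eta$. By (ii) we have $\eta/\!\sim=\{\eta\}$, so $\mu_{1},\mu_{2}\in\{\eta,\mathbf{1}_{\mathbf{A}}\}$. Since $\eta<\mathbf{1}_{\mathbf{A}}$ (a completely meet irreducible congruence has a strict cover $\eta^{+}$), the case $\mu_{1}=\mu_{2}=\mathbf{1}_{\mathbf{A}}$ is excluded, as it would give $\mathbf{1}_{\mathbf{A}}=\mu_{1}\wedge\mu_{2}\leq\eta$. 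Hence some $\mu_{i}$ equals $\eta$, giving $\alpha\leq\eta$ or $\beta\leq\eta$; thus $\eta$ is meet-prime and (i) follows from the linking lemma.

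For (i)$\Rightarrow$(ii) I would assume distributivity, so every completely meet irreducible congruence is meet-prime, and take $\varphi\sim\psi$ witnessed by a chain of transpositions as in $(*)$. The crux is the one-step claim: whenever $I[\mu,\mu^{+}]\searrow I[\alpha,\beta]\nearrow I[\nu,\nu^{+}]$ with $\mu,\nu\in\mathsf{Cm}(\mathbf{A})$, one has $\mu=\nu$. Reading off the two transpositions gives $\alpha=\beta\wedge\mu=\beta\wedge\nu$, $\mu^{+}=\beta\vee\mu$ and $\nu^{+}=\beta\vee\nu$. From $\beta\wedge\nu=\alpha\leq\mu$ and meet-primeness of $\mu$ we get $\beta\leq\mu$ or $\nu\leq\mu$; the former is impossible, as it would force $\mu^{+}=\beta\vee\mu=\mu$, so $\nu\leq\mu$, and symmetrically $\mu\leq\nu$, whence $\mu=\nu$. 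Applying this successively along $(*)$ to the pairs $(\varphi,\mu_{1}),(\mu_{1},\mu_{2}),\dots,(\mu_{n-1},\psi)$ collapses the chain to $\varphi=\mu_{1}=\dots=\mu_{n-1}=\psi$, so $\varphi/\!\sim=\{\varphi\}$.

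I expect the substantive content to be light and the only delicate points to be bookkeeping: reading each transposition with the correct orientation so that $\beta\leq\mu$ genuinely contradicts $\mu<\mu^{+}$, and invoking the algebraic-lattice decomposition into completely meet irreducibles at the one place where meet-primeness must be promoted to the full distributive law. Neither step requires the $M_{3}$--$N_{5}$ machinery, so modularity enters only through its role in Theorem \ref{ddecomposition} and in guaranteeing that transposed prime intervals remain prime.
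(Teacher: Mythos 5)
Your proof is correct and takes essentially the same route as the paper's: your (ii)$\Rightarrow$(i) direction is the paper's argument verbatim (Theorem \ref{ddecomposition} plus the Birkhoff meet-decomposition over completely meet irreducibles), and your one-step collapse of $I[\mu,\mu^{+}]\searrow I[\alpha,\beta]\nearrow I[\nu,\nu^{+}]$ applied along the chain $(*)$ is exactly the paper's (i)$\Rightarrow$(ii) argument. The only difference is cosmetic: you factor the distributivity step through meet-primeness of each $\eta\in\mathsf{Cm}(\mathbf{A})$, where the paper computes $\nu=(\beta\vee\nu)\wedge(\mu\vee\nu)=\nu^{+}\wedge(\mu\vee\nu)$ directly and then invokes the unique cover $\nu^{+}$.
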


\begin{proof}
$\left(  i\right)  \Rightarrow\left(  ii\right)  $ Let $\psi\in\varphi
/\!\!\sim$. Observe that for $\nu,\mu\in\mathsf{Cm}\left(  \mathbf{A}\right)
$, $\alpha,\beta\in\mathsf{Con}\left(  \mathbf{A}\right)  $ from $I[\nu
,\nu^{+}]\searrow I[\alpha,\beta]\nearrow I[\mu,\mu^{+}]$ it follows that
$\nu=\mu$. Indeed, we have $\beta\wedge\mu=\alpha\leq\nu$, and, by (i),
$\nu=(\beta\vee\nu)\wedge(\mu\vee\nu)=\nu^{+}\wedge(\mu\vee\nu)$. Hence
$\mu\vee\nu=\nu$ and, analogously, $\mu\vee\nu=\mu$, as desired. Taking this
observation into account, we get from (*) the equalities $\varphi=\mu
_{1}=\ldots=\mu_{n-1}=\psi$.

$(ii)\Rightarrow(i)$ Let $\alpha,\beta,\gamma\in\mathsf{Con}\left(
\mathbf{A}\right)  $. To show that $\alpha\wedge\left(  \beta\vee
\gamma\right)  =\left(  \alpha\wedge\beta\right)  \vee\left(  \alpha
\wedge\gamma\right)  $ it suffices to prove that $\left(  \alpha\wedge
\beta\right)  \vee\left(  \alpha\wedge\gamma\right)  \leq\mu$ implies
$\alpha\wedge\left(  \beta\vee\gamma\right)  \leq\mu$ for every $\mu
\in\mathsf{Cm}\left(  \mathbf{A}\right)  $. If $\mu\in\mathsf{Cm}\left(
\mathbf{A}\right)  $ and $\left(  \alpha\wedge\beta\right)  \vee\left(
\alpha\wedge\gamma\right)  \leq\mu$, then $\alpha\wedge\beta,\alpha
\wedge\gamma\leq\mu$. From $\mu/\!\sim\,=\left\{  \mu\right\}  $ and from
Theorem \ref{ddecomposition} we deduce that either $\alpha\leq\mu$ or
$\beta\leq\mu$ and either $\alpha\leq\mu$ or $\gamma\leq\mu$. Hence either
$\alpha\leq\mu$ or $\beta\vee\gamma\leq\mu$, and so $\alpha\wedge\left(
\beta\vee\gamma\right)  \leq\mu$.
\end{proof}

From now on in this section we strengthen our assumption on $\mathbf{A}$ by
requiring that $\mathbf{A}$ belongs to a congruence modular variety. In
particular, this means that we can apply the commutator theory of Freese and
McKenzie \cite{FreMcK87} to study the properties of $\mathbf{A}$. The
definition of the relation PIP simplifies considerably in the case of algebras
satisfying the condition (SC1) that was isolated and studied by Idziak and
S\l omczy\'{n}ska in \cite{IdzSlo01}.

\begin{definition}
An algebra $\mathbf{A}$ from a congruence modular variety fulfills
the \emph{condition $\mathrm{(SC1)}$} if and only if for every $\varphi\in
\mathsf{Cm}\left(  \mathbf{A}\right)  $ the centralizer $(\varphi:\varphi
^{+})\leq\varphi^{+}$.
\end{definition}

Note that this condition implies the condition (C1) from \cite{FreMcK87}
described by a commutator identity: $[\alpha,\beta]=([\alpha,\alpha
]\wedge\beta)\vee([\beta,\beta]\wedge\alpha)$ for every $\alpha,\beta
\in\mathsf{Con}\left(  \mathbf{A}\right)  $, see \cite[Theorem 15]{IdzSlo01}.
We will see further that this class covers many interesting cases, in
particular Fregean algebras that play an important role in the algebraization
of intuitionistic logic and its fragments \cite{IdzSloWro09}. From the
definition of \textrm{(SC1)} we get the following lemma that will be
frequently used throughout the paper, see also \cite[Lemma 21]{IdzSlo01}.

\begin{lemma}
\label{centralmon}Let $\mathbf{A}$ be an algebra from a congruence modular
variety and $\varphi\in\mathsf{Cm}\left(  \mathbf{A}\right)  $. Then

\begin{enumerate}
\item if $\varphi^{+}$ is not Abelian over $\varphi$, then $\left(
\varphi:\varphi^{+}\right)  =\varphi$, and $\varphi/\!\!\sim\,=\left\{
\varphi\right\}  $.

\item if $\mathbf{A}$ fulfills \textrm{(SC1)} and $\varphi^{+}$ is Abelian
over $\varphi$, then

\begin{enumerate}
\item $\left(  \varphi:\varphi^{+}\right)  =\varphi^{+}$, and

\item $\psi^{+}$ is Abelian over $\psi$ and $\psi^{+}=\varphi^{+}$ for every
$\psi\in\varphi\!/\!\!\sim$.
\end{enumerate}
\end{enumerate}
\end{lemma}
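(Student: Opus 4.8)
The plan is to analyze the commutator $[\varphi^{+},\varphi^{+}]$ using the centralizer condition, and then propagate the conclusions along a PIP-sequence of the form (*). For part (1), the hypothesis that $\varphi^{+}$ is not Abelian over $\varphi$ means $[\varphi^{+},\varphi^{+}]\nleq\varphi$. First I would show $(\varphi:\varphi^{+})=\varphi$. Since $\varphi$ is completely meet irreducible, any congruence strictly above $\varphi$ dominates $\varphi^{+}$; as $(\varphi:\varphi^{+})\geq\varphi$ always holds, it suffices to rule out $(\varphi:\varphi^{+})\geq\varphi^{+}$. But $(\varphi:\varphi^{+})\geq\varphi^{+}$ would say precisely that $\varphi^{+}$ centralizes itself modulo $\varphi$, i.e. $[\varphi^{+},\varphi^{+}]\leq\varphi$, contradicting non-Abelianness. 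Hence $(\varphi:\varphi^{+})=\varphi$. For the second assertion, $\varphi/\!\!\sim\,=\{\varphi\}$, I would take any $\psi$ with $\varphi\sim\psi$ and use the sequence (*). The key local fact is that one prime-interval transposition $I[\nu,\nu^{+}]\searrow I[\alpha,\beta]\nearrow I[\mu,\mu^{+}]$ forces a relation between the commutators (Abelian-ness transposes across perspective intervals, since the commutator behaves well under the join/meet operations defining $\nearrow,\searrow$). Because $\varphi^{+}$ is non-Abelian over $\varphi$, the same must hold at the other end of each transposition step, so each intermediate $\mu_i$ also has non-Abelian cover, which together with the centralizer computation pins the interval uniquely.

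For part (2), assume (SC1) and that $\varphi^{+}$ is Abelian over $\varphi$, i.e. $[\varphi^{+},\varphi^{+}]\leq\varphi$. For (2a), the inequality $(\varphi:\varphi^{+})\leq\varphi^{+}$ is exactly the content of (SC1), so I only need the reverse inequality $(\varphi:\varphi^{+})\geq\varphi^{+}$. But $[\varphi^{+},\varphi^{+}]\leq\varphi$ says $\varphi^{+}$ centralizes $\varphi^{+}$ modulo $\varphi$, which is precisely $\varphi^{+}\leq(\varphi:\varphi^{+})$ by the definition of the centralizer as the largest congruence centralizing $\varphi^{+}$ modulo $\varphi$. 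Combining the two gives $(\varphi:\varphi^{+})=\varphi^{+}$, establishing (2a).

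The heart of the lemma is (2b). I would argue by induction along the length of the sequence (*), so it suffices to treat a single double transposition: given $I[\varphi,\varphi^{+}]\searrow I[\alpha,\beta]\nearrow I[\psi,\psi^{+}]$ (or the reverse), with $\varphi^{+}$ Abelian over $\varphi$, I must show $\psi^{+}$ is Abelian over $\psi$ and $\psi^{+}=\varphi^{+}$. The plan is to use that the commutator is monotone and interacts with the transposition relations: from $\varphi\searrow$ onto $[\alpha,\beta]$ we have $\beta=\varphi^{+}\wedge(\text{something})$ and $\alpha=\varphi\wedge\beta$-type identities, forcing $[\beta,\beta]\leq\alpha$, and then transposing up to $[\psi,\psi^{+}]$ preserves Abelian-ness. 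The equality $\psi^{+}=\varphi^{+}$ is where I expect the real obstacle: one must show not merely that the covers are projective but that the larger congruence is literally the same. I anticipate using (2a) applied to $\psi$: once $\psi^{+}$ is known Abelian over $\psi$, we get $(\psi:\psi^{+})=\psi^{+}$, and then the centralizer identity forces $\varphi^{+}$ and $\psi^{+}$ to coincide because both equal the centralizer of the common Abelian prime quotient shared through $[\alpha,\beta]$. Concretely, I would show $\varphi^{+}\vee\psi=(\alpha:\beta)$-type expressions collapse, using modularity to transfer the covering relation. The delicate point is that projectivity of prime intervals in a merely modular (not distributive) lattice does not by itself identify the top congruences; it is the centralizer rigidity supplied by (SC1), through (2a), that upgrades projectivity to genuine equality, and checking that this upgrade survives each step of (*) is the step I expect to require the most care.
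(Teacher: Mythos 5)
Your proposal is correct and takes essentially the same route as the paper's proof: both reduce the lemma to the Freese--McKenzie facts that projectivity of prime quotients preserves abelianity and the centralizer itself, combined with complete meet irreducibility in part (1) and \textrm{(SC1)} in part (2a), so that $\psi=\varphi$ (resp.\ $\psi^{+}=\varphi^{+}$) is read off from the equality of centralizers. The one step you should state explicitly, as the paper does by citing \cite{FreMcK87}, is that projective prime quotients have \emph{equal} centralizers, $(\varphi:\varphi^{+})=(\psi:\psi^{+})$; in part (1) the non-Abelianness of each intermediate cover alone does not ``pin the interval,'' it is this invariance, together with $(\mu_{i}:\mu_{i}^{+})=\mu_{i}$, that forces all the $\mu_{i}$ to coincide with $\varphi$.
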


\begin{proof}
We have $\varphi\leq\left(  \varphi:\varphi^{+}\right)  $. Note that
$\varphi^{+}$ is Abelian over $\varphi$ if and only if $\left(  \varphi
:\varphi^{+}\right)  \geq\varphi^{+}$. Since projectivity preserves
centralizers and abelianity \cite{FreMcK87}, for $\psi\in\mathsf{Cm}\left(
\mathbf{A}\right)  $, $\psi\sim\varphi$, we deduce that $(\psi:\psi
^{+})=(\varphi:\varphi^{+})$ and $\psi^{+}$ is Abelian over $\psi$ if and only
if $\varphi^{+}$ is Abelian over $\varphi$. This implies (1). If $\mathbf{A}$
fulfills \textrm{(SC1)} and $\varphi^{+}$ is Abelian over $\varphi$, we get
$\psi^{+}=(\psi:\psi^{+})=(\varphi:\varphi^{+})=\varphi^{+}$, and (2) follows.
\end{proof}

Assume that an algebra $\mathbf{A}$ from a congruence modular variety
satisfies \textrm{(SC1)}.

\begin{definition}
For $U\in\mathsf{Cm}\left(  \mathbf{A}\right)  /\!\!\sim$ we put
$0_{U}:=\bigwedge U$. For $S\subset U$, set $\overline{S}:=S\cup\left\{
\eta^{+}\right\}  $, where $\eta\in U$. (It follows from Lemma
\ref{centralmon}.2 that the choice of $\eta$ is irrelevant.)
\end{definition}

In this situation, we can easily characterized these congruences $\alpha$ for
which $\left[  \alpha,\alpha\right]  \leq0_{U}$.

\begin{proposition}
\label{abeovezer}Suppose that an algebra $\mathbf{A}$ from a congruence
modular variety satisfies \textrm{(SC1)}. Let $U=\eta/\!\!\sim\;\in
\mathsf{Cm}\left(  \mathbf{A}\right)  /\!\!\sim$. For $\alpha\in
\mathsf{Con}\left(  \mathbf{A}\right)  $ we have:

1. if $\left[  \alpha,\alpha\right]  \leq0_{U}$, then $\alpha\leq\eta^{+}$;

2. if $\eta^{+}$ is Abelian over $\eta$, then $\left[  \alpha,\alpha\right]
\leq0_{U}$ iff $\alpha\leq\eta^{+}$.
\end{proposition}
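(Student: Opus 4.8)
The plan is to reduce both parts to commutator arithmetic governed by the centralizer $(0_U:\eta^+)$, using the dichotomy of Lemma \ref{centralmon} to handle the two possible behaviours of $U=\eta/\!\!\sim$. I would begin by recording the trivial but useful inclusions $0_U=\bigwedge U\leq\eta\leq\eta^+$, and then prove Part~1 by splitting on whether $\eta^+$ is Abelian over $\eta$.

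If $\eta^+$ is \emph{not} Abelian over $\eta$, Lemma \ref{centralmon}.1 collapses $U$ to $\{\eta\}$, so $0_U=\eta$ and the hypothesis becomes $[\alpha,\alpha]\leq\eta$. Here I would argue by contradiction: assuming $\alpha\not\leq\eta^+$ forces $\alpha\not\leq\eta$, whence $\alpha\vee\eta>\eta$ and so $\alpha\vee\eta\geq\eta^+$ by the minimality of $\eta^+$ over $\eta$. Expanding $[\alpha\vee\eta,\alpha\vee\eta]$ through join-distributivity of the commutator and bounding each of $[\alpha,\alpha]$, $[\alpha,\eta]$, $[\eta,\eta]$ by $\eta$ yields $[\eta^+,\eta^+]\leq[\alpha\vee\eta,\alpha\vee\eta]\leq\eta$ by monotonicity, i.e.\ $\eta^+$ is Abelian over $\eta$ after all --- a contradiction, so $\alpha\leq\eta^+$.

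If $\eta^+$ \emph{is} Abelian over $\eta$, I would first extract from Lemma \ref{centralmon}.2 that every $\psi\in U$ has $\psi^+=\eta^+$ and $(\psi:\psi^+)=\psi^+$. This gives two facts: $[\eta^+,\eta^+]=[\psi^+,\psi^+]\leq\psi$ for all $\psi$, hence $[\eta^+,\eta^+]\leq\bigwedge U=0_U$; and, since the centralizer distributes over meets in its first argument, $(0_U:\eta^+)=\bigwedge_{\psi\in U}(\psi:\eta^+)=\bigwedge_{\psi\in U}\psi^+=\eta^+$. Now I would invoke \textrm{(C1)} --- available because \textrm{(SC1)} implies it --- to linearize $[\alpha,\eta^+]=([\alpha,\alpha]\wedge\eta^+)\vee([\eta^+,\eta^+]\wedge\alpha)$; since $[\alpha,\alpha]\leq 0_U\leq\eta^+$ and $[\eta^+,\eta^+]\leq 0_U$, both joinands sit below $0_U$, so $[\alpha,\eta^+]\leq 0_U$ and therefore $\alpha\leq(0_U:\eta^+)=\eta^+$, finishing Part~1.

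Part~2 then needs almost nothing new: its forward direction is precisely Part~1 under the Abelian hypothesis, and its converse is immediate, since $\alpha\leq\eta^+$ gives $[\alpha,\alpha]\leq[\eta^+,\eta^+]\leq 0_U$ by monotonicity and the bound already obtained. I expect the genuine crux to be the identity $(0_U:\eta^+)=\eta^+$ in the Abelian case: it is what turns the ``diagonal'' hypothesis $[\alpha,\alpha]\leq 0_U$ into a usable centralizer inequality, and it is exactly here that \textrm{(SC1)} (through Lemma \ref{centralmon}.2) and the meet-distributivity of centralizers are indispensable, the latter application of \textrm{(C1)} being the other point where the structure of the variety, rather than bare modularity, is used.
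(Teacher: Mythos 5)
Your proof is correct, but it is organized quite differently from the paper's. The paper proves Part 1 by a single uniform argument with no case split on abelianity: assuming $\alpha \nleq \eta^{+}$, condition (SC1) (i.e.\ $(\eta:\eta^{+})\leq\eta^{+}$) forces $[\eta^{+},\alpha\vee\eta]\nleq\eta$; since $\eta\prec\eta^{+}$ this yields $\eta^{+}=\eta\vee[\eta^{+},\alpha\vee\eta]$, and then join-additivity of the commutator together with $[\alpha,\alpha]\leq 0_{U}\leq\eta$ collapses the right-hand side to $\eta$, a contradiction. You instead invoke the dichotomy of Lemma \ref{centralmon}: in the non-Abelian case (where $U=\{\eta\}$) your computation is essentially the contrapositive of the same commutator estimate, but in the Abelian case you bring in two tools the paper never uses for this proposition --- the identity (C1) (legitimately available, since the paper records that (SC1) implies (C1)) and the meet-distributivity of the centralizer in its modulus, giving $(0_{U}:\eta^{+})=\bigwedge_{\psi\in U}(\psi:\eta^{+})=\eta^{+}$ and hence $\alpha\leq(0_{U}:\eta^{+})=\eta^{+}$. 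Both routes are sound: the paper's is more economical, needing only (SC1) plus monotonicity, join-additivity, and the covering $\eta\prec\eta^{+}$; yours is heavier but isolates the structurally appealing fact $(0_{U}:\eta^{+})=\eta^{+}$ and makes explicit the bound $[\eta^{+},\eta^{+}]\leq 0_{U}$, which is precisely what Part 2 needs --- and on Part 2 itself your argument and the paper's coincide.
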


\begin{proof}
(1) Let $\left[  \alpha,\alpha\right]  \leq0_{U}$. Applying the definition of
\textrm{(SC1)}, we get $\left(  \eta:\eta^{+}\right)  \leq\eta^{+}$. Suppose
that $\alpha\nleqslant\eta^{+}$. Hence $\left[  \eta^{+},\alpha\vee
\eta\right]  \nleqslant\eta$. Then $\eta^{+}=\eta\vee\left[  \eta^{+}%
,\alpha\vee\eta\right]  \leq\eta\vee\left[  \alpha\vee\eta,\alpha\vee
\eta\right]  =\eta\vee\left[  \alpha,\alpha\right]  =\eta$, a contradiction.

(2) Now, let $\eta^{+}$ is Abelian over $\eta$ and $\alpha\leq\eta^{+}$. For
$\psi\in\eta/\!\!\sim$, it follows from Lemma \ref{centralmon}.2, that
$\psi^{+}=\eta^{+}$ is Abelian over $\psi$. In consequence, $\left[  \eta
^{+},\eta^{+}\right]  \leq\psi$. Thus $\left[  \alpha,\alpha\right]
\leq\left[  \eta^{+},\eta^{+}\right]  \leq0_{U}$.
\end{proof}

Observe that from Lemma \ref{centralmon}.2 it follows immediately that two
different congruences $\varphi,\psi\in\mathsf{Cm}\left(  \mathbf{A}\right)  $
such that $\varphi\sim\psi$ fulfills $\varphi^{+}=\psi^{+}$ and so are
incomparable. This observation allows us to obtain the projectivity of
completely meet irreducible congruences in (*) in short chains.

\begin{lemma}
\label{triplearrow}Suppose that an algebra $\mathbf{A}$ from a congruence
modular variety satisfies \textrm{(SC1)}. Then for all $\varphi,\psi
\in\mathsf{Cm}(\mathbf{A})$ such that $\varphi\sim\psi$ there exist
$\alpha,\beta\in\mathsf{Con}(\mathbf{A})$ with $I[\varphi,\varphi^{+}]\searrow
I[\alpha,\beta]\nearrow I[\psi,\psi^{+}]$.
\end{lemma}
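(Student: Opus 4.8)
The plan is to collapse the whole chain (*) into a single valley by exhibiting one interval $[\alpha,\beta]$ that transposes up onto \emph{both} $I[\varphi,\varphi^{+}]$ and $I[\psi,\psi^{+}]$. Concretely, I would take $\alpha=\varphi\wedge\psi$ and let $\beta$ be a common complement of $\varphi$ and $\psi$ inside the interval $[\varphi\wedge\psi,\varphi^{+}]$; so the whole task reduces to producing such a complement.

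First I dispose of the case $\varphi=\psi$ (take $\beta=\varphi^{+}$, $\alpha=\varphi$). Assuming $\varphi\neq\psi$, since $\psi\in\varphi/\!\!\sim$ is distinct from $\varphi$, Lemma \ref{centralmon}.1 rules out the non-Abelian case, so $\varphi^{+}$ is Abelian over $\varphi$ and Lemma \ref{centralmon}.2 gives $\varphi^{+}=\psi^{+}=:\theta$ with $\theta$ Abelian over both $\varphi$ and $\psi$; as observed before the lemma, $\varphi,\psi$ are then incomparable and each is covered by $\theta$. Put $\delta:=\varphi\wedge\psi$. Modularity gives $I[\delta,\varphi]\nearrow I[\psi,\theta]$, so $\varphi$ (and symmetrically $\psi$) covers $\delta$; hence $[\delta,\theta]$ is a modular interval of height $2$ whose atoms include the two distinct elements $\varphi,\psi$.

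Next I would show that $[\delta,\theta]$ is abelian. With $U=\varphi/\!\!\sim$ we have $0_{U}=\bigwedge U\le\delta$, and since $\theta$ is Abelian over $\varphi$, Proposition \ref{abeovezer}.2 applied with $\alpha=\theta$ yields $[\theta,\theta]\le0_{U}\le\delta$. Thus $[\delta,\theta]$ is an abelian interval, and by the commutator theory of \cite{FreMcK87} it is isomorphic to the submodule lattice of a module $M$ of length $2$ over the ring attached to this abelian quotient, the atoms of $[\delta,\theta]$ corresponding to the simple submodules of $M$. Because $\varphi\neq\psi$ give two distinct simple submodules, $M$ is not uniserial, so $M\cong(\varphi/\delta)\oplus(\psi/\delta)$ and therefore $\theta/\varphi\cong\psi/\delta$ and $\theta/\psi\cong\varphi/\delta$ as modules.

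The crucial step, which I expect to be the main obstacle, is to produce a \emph{third} atom of $[\delta,\theta]$, equivalently to show $\theta/\varphi\cong\theta/\psi$. This is exactly where $\varphi\sim\psi$ must enter through (*): projectivity of prime intervals preserves the isomorphism type of abelian prime quotients (again by \cite{FreMcK87}), so $\theta/\varphi\cong\theta/\psi$, and combined with the previous paragraph this forces $\varphi/\delta\cong\psi/\delta=:S$, whence $M\cong S\oplus S$. A module $S^{2}$ over the division ring $\mathrm{End}(S)$ has at least three simple submodules (the two axes and a diagonal), so $[\delta,\theta]$ has an atom $\beta$ distinct from $\varphi$ and $\psi$. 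In a height-$2$ modular lattice such a $\beta$ satisfies $\beta\wedge\varphi=\delta=\beta\wedge\psi$ and $\beta\vee\varphi=\theta=\beta\vee\psi$, so with $\alpha=\delta$ we obtain precisely $I[\varphi,\varphi^{+}]\searrow I[\alpha,\beta]\nearrow I[\psi,\psi^{+}]$. The points requiring the most care are the module interpretation of the abelian interval and the transport of the module-isomorphism type under projectivity, for without the latter one cannot exclude the distributive configuration $[\delta,\theta]\cong\mathbf{2}^{2}$, in which no common complement — and hence no single valley — exists.
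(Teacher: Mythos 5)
Your reduction is fine as far as it goes: using Lemma \ref{centralmon} to get $\varphi^{+}=\psi^{+}=\theta$ with $\theta$ Abelian over both, and Proposition \ref{abeovezer} to get $[\theta,\theta]\leq 0_{U}\leq\delta$, correctly reduces the lemma to producing a third atom of the height-$2$ Abelian interval $[\delta,\theta]$ (and this reformulation is indeed equivalent to the statement). The gap is in the two steps you attribute to \cite{FreMcK87}; neither is a theorem of that book, and together they carry the entire weight of the proof. The fundamental theorem on Abelian algebras there (Herrmann's theorem) applies to \emph{Abelian algebras}, i.e.\ $[\mathbf{1}_{\mathbf{A}},\mathbf{1}_{\mathbf{A}}]=\mathbf{0}_{\mathbf{A}}$, and says such algebras are affine; it does \emph{not} say that an Abelian interval $I[\delta,\theta]$ (meaning $[\theta,\theta]\leq\delta$) in the congruence lattice of an arbitrary member of a congruence modular variety is the full submodule lattice of a single module. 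What the difference-term machinery actually provides is a module structure on each $\theta/\delta$-class of $\mathbf{A}/\delta$ \emph{separately}; a congruence in $I[\delta,\theta]$ is a compatible system of submodules, one per class, closed under all unary polynomials, so $I[\delta,\theta]$ is only a subdirect product of the class-wise submodule lattices. Consequently, even granting that one class decomposes as $S_{1}\oplus S_{2}$ with $S_{1}\cong S_{2}$, the diagonal of an abstract module isomorphism is a submodule of \emph{one} class; to conclude that it comes from a congruence of $\mathbf{A}$ (your third atom) you must realize that isomorphism compatibly on every class by polynomials --- and that is precisely what needs to be proven, not something module theory hands you. The group-theoretic template you are following (chief factors and the Jordan--H\"{o}lder theorem with operators) works because in groups congruences are determined by the class of the identity, so the interval below an Abelian normal subgroup really is one $G$-module's submodule lattice; the present lemma is asserted for arbitrary algebras with (SC1), with no $1$-regularity and no finiteness.

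The same objection hits your ``crucial step'': \cite{FreMcK87} proves that projectivity preserves centralizers and abelianity (this is exactly what Lemma \ref{centralmon} uses), but not that it preserves ``the isomorphism type of the associated simple module''; without the coordinatization above this phrase does not even have a well-defined meaning, and with it, transporting an isomorphism along the chain (*), which wanders through remote parts of the lattice, again runs into the cross-class compatibility problem. By contrast, the paper's proof is purely lattice-theoretic: it inducts on the number of up-arrows in (*) to reduce to a chain with two valleys, uses (SC1) only through Lemma \ref{centralmon} to align the tops $\varphi^{+}=\mu^{+}=\psi^{+}$, and then exhibits the valley explicitly as $\alpha:=(\varphi\wedge\alpha_{2})\vee(\psi\wedge\alpha_{1})$ and $\beta:=\mu\wedge((\varphi\wedge\beta_{2})\vee(\psi\wedge\beta_{1}))$, verifying the required transpositions by direct modularity computations. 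No module coordinatization is invoked anywhere, which is essential at this level of generality; the third-atom statement you are aiming for is only obtained afterwards (in the proof of Theorem \ref{Boolean}, where $\varphi\bullet\psi$ is shown to lie in $U$), as a \emph{consequence} of this lemma, so an argument that presupposes it would be circular in the paper's development.
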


\begin{proof}
Let $\varphi,\psi\in\mathsf{Cm}(\mathbf{A})$, $\varphi\sim\psi$. We assume
that $\varphi\neq\psi$, since otherwise the statement is obvious. Then, there
exist $\alpha_{1},\ldots,\alpha_{n},\beta_{1},\ldots,\beta_{n}\in
\mathsf{Con}(\mathbf{A})$ and $\mu_{1},\ldots,\mu_{n-1}\in\mathsf{Cm}%
(\mathbf{A})$ fulfilling (*). To reduce this chain to a short one we may
induct on $n$, the number of up-arrows. Actually, it suffices to consider the
case $n=2$. Therefore suppose that
\[
I[\varphi,\varphi^{+}]\searrow I[\alpha_{1},\beta_{1}]\nearrow I[\mu,\mu
^{+}]\searrow I[\alpha_{2},\beta_{2}]\nearrow I[\psi,\psi^{+}]
\]
holds for some $\alpha_{1},\beta_{1},\alpha_{2},\beta_{2}\in\mathsf{Con}%
(\mathbf{A})$ and $\mu\in\mathsf{Cm}(\mathbf{A})$.

In consequence, it follows from Lemma~\ref{centralmon}, that (1) $\varphi
^{+}=\mu^{+}=\psi^{+}$. Then we may assume that (2) $\beta_{2}\leq\varphi
\vee\alpha_{2}$ and (3) $\beta_{1}\leq\psi\vee\alpha_{1}$, since otherwise we
would get either $I[\varphi,\varphi^{+}]\searrow I[\alpha_{2},\beta_{2}]$ or
$I[\alpha_{1},\beta_{1}]\nearrow I[\psi,\psi^{+}]$, respectively. Now, we will
show that the congruences%
\[
\alpha:=(\varphi\wedge\alpha_{2})\vee(\psi\wedge\alpha_{1})
\]
and%
\[
\beta:=\mu\wedge((\varphi\wedge\beta_{2})\vee(\psi\wedge\beta_{1}))
\]
witness our Lemma. Put $\gamma:=(\varphi\wedge\beta_{2})\vee(\psi\wedge
\beta_{1})$. Using modularity $(m)$ we get
\begin{gather*}
\varphi\vee\beta=\varphi\vee(\varphi\wedge\beta_{2})\vee(\mu\wedge
\gamma)\underset{(m)}{=}\varphi\vee\lbrack((\varphi\wedge\beta_{2})\vee
\mu)\wedge\gamma]\\
=\varphi\vee\lbrack((\varphi\wedge\beta_{2})\vee\alpha_{2}\vee\mu)\wedge
\gamma]\underset{(m)}{=}\varphi\vee\lbrack(((\varphi\vee\alpha_{2})\wedge
\beta_{2})\vee\mu)\wedge\gamma]\\
\underset{(2)}{=}\varphi\vee\lbrack(\beta_{2}\vee\mu)\wedge\gamma]=\varphi
\vee\lbrack\mu^{+}\wedge\gamma]\underset{(1)}{=}\varphi\vee\lbrack\varphi
^{+}\wedge\gamma]\\
\underset{(1)}{=}\varphi\vee\gamma=\varphi\vee(\psi\wedge\beta_{1}%
)=\varphi\vee\alpha_{1}\vee(\psi\wedge\beta_{1})\\
\underset{(m)}{=}\varphi\vee((\alpha_{1}\vee\psi)\wedge\beta_{1}%
)\underset{(3)}{=}\varphi\vee\beta_{1}=\varphi^{+}\text{,}%
\end{gather*}
and in the very same way $\psi\vee\beta=\psi^{+}$. Moreover
\begin{align*}
\varphi\wedge\beta &  =\mu\wedge\varphi\wedge((\varphi\wedge\beta_{2}%
)\vee(\psi\wedge\beta_{1}))\\
&  \underset{(m)}{=}\mu\wedge((\varphi\wedge\beta_{2})\vee(\varphi\wedge
\psi\wedge\beta_{1}))\\
&  =\mu\wedge((\varphi\wedge\beta_{2})\vee(\psi\wedge\alpha_{1}))\\
&  \underset{(m)}{=}(\mu\wedge\varphi\wedge\beta_{2})\vee(\psi\wedge\alpha
_{1})\\
&  =(\varphi\wedge\alpha_{2})\vee(\psi\wedge\alpha_{1})=\alpha,
\end{align*}
and analogously $\psi\wedge\beta=\alpha$, which completes the proof.
\end{proof}

The main focus of this work is to understand the behaviour of the relation PIP
for an algebra $\mathbf{A}$ from a congruence modular variety such that
$\mathrm{H}(\mathbf{A})$ is Fregean, see Sec. \ref{Fregean}. In particular, we
show that in this case each equivalence class $\eta/\!\!\sim$ in
$\mathsf{Cm}\left(  \mathbf{A}\right)  $, supplemented with a unit element
$\eta^{+}$, forms a Boolean group.

\section{PIP in Fregean algebras\label{Fregean}}

In the sequel we assume that the language of an algebra (or class of algebras
of the same type) we consider contains a distinguished constant term $1$.

\begin{definition}
We call an algebra $\mathbf{A}$ \emph{congruence orderable} (with respect to
$1$) if $\Theta_{\mathbf{A}}\left(  1,a\right)  =\Theta_{\mathbf{A}}\left(
1,b\right)  $ implies $a=b$ for $a,b\in A$, where $\Theta_{\mathbf{A}}\left(
c,d\right)  $ denotes the smallest congruence containing $\left(  c,d\right)
$. We call it \emph{Fregean} if, additionally, $1/\alpha=1/\beta$ implies
$\alpha=\beta$ for $\alpha,\beta\in\mathsf{Con}\left(  \mathbf{A}\right)  $
(i.e., $\mathbf{A}$ is $1$\emph{-regular}).

Thus, if an algebra is congruence orderable, then the formula $a\leq b$ iff
$\Theta_{\mathbf{A}}\left(  1,a\right)  \subset\Theta_{\mathbf{A}}\left(
1,b\right)  $ for $a,b\in A$ introduces a partial order in $A$, and, if it is
Fregean, then the congruences are uniquely determined by their $1$%
--equivalence classes. A class of algebras of the same type is called
\emph{Fregean} (\emph{congruence orderable}) if all algebras in this class are
Fregean (congruence orderable) with respect to the common constant term $1$.
\end{definition}

The following property of subdirectly irreducible congruence orderable
algebras is crucial for understanding the structure of Fregean algebras.

\begin{proposition}
\label{natord-mu}\cite[Lemma 2.1]{IdzSloWro09} If $\mu$ is the monolith of a
subdirectly irreducible congruence orderable (with respect to $1$) algebra
$\mathbf{A}$, then $|1/\mu|=2$ and all other $\mu$--cosets are one element.
\end{proposition}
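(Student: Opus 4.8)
The plan is to reduce both assertions to a single sharper claim: \emph{if $(a,b)\in\mu$ and $a\neq b$, then $1\in\{a,b\}$}. Granting this, the proposition follows immediately. Since $\mathbf{A}$ is subdirectly irreducible, $\mu$ is nontrivial, so such a pair exists; relabelling so that the element equal to $1$ is the first coordinate, we obtain $b\neq 1$ with $(1,b)\in\mu$, whence $|1/\mu|\geq 2$. For the reverse bound and the statement about the other cosets, observe that the claim says no two elements both different from $1$ can be $\mu$-related: thus any coset $c/\!\mu$ with two distinct elements $c,c'$ would force $1\in\{c,c'\}$, i.e. $c/\!\mu=1/\mu$, so every coset other than $1/\mu$ is a singleton; and if $1/\mu$ contained $1,e,e'$ all distinct, then $(e,e')\in\mu$ with $e,e'\neq 1$ would violate the claim, giving $|1/\mu|=2$.

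I would prove the claim as follows. Suppose for contradiction that $(a,b)\in\mu$ with $a\neq b$ and $a,b\neq 1$. Since $\mathbf{A}$ is subdirectly irreducible with monolith $\mu$, every nontrivial congruence contains $\mu$; as $a\neq 1$ and $b\neq 1$, the principal congruences $\Theta_{\mathbf{A}}(1,a)$ and $\Theta_{\mathbf{A}}(1,b)$ are nontrivial, so $\mu\leq\Theta_{\mathbf{A}}(1,a)$ and $\mu\leq\Theta_{\mathbf{A}}(1,b)$. Then $(a,b)\in\mu\leq\Theta_{\mathbf{A}}(1,a)$ together with $(1,a)\in\Theta_{\mathbf{A}}(1,a)$ gives, by transitivity, $(1,b)\in\Theta_{\mathbf{A}}(1,a)$, hence $\Theta_{\mathbf{A}}(1,b)\leq\Theta_{\mathbf{A}}(1,a)$; the symmetric computation with $a$ and $b$ interchanged yields $\Theta_{\mathbf{A}}(1,a)\leq\Theta_{\mathbf{A}}(1,b)$. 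Therefore $\Theta_{\mathbf{A}}(1,a)=\Theta_{\mathbf{A}}(1,b)$, and congruence orderability forces $a=b$, contradicting $a\neq b$.

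The one genuine idea is the middle step: the monolith, lying below every nontrivial principal congruence $\Theta_{\mathbf{A}}(1,a)$, lets us drag $1$ into the same $\Theta_{\mathbf{A}}(1,a)$-block as the given $\mu$-related pair, thereby converting the relation $(a,b)\in\mu$ into the equality $\Theta_{\mathbf{A}}(1,a)=\Theta_{\mathbf{A}}(1,b)$ on which congruence orderability can act. I expect no further obstacle; it is worth noting that the argument uses only congruence orderability (not full $1$-regularity) together with subdirect irreducibility, which are exactly the hypotheses available.
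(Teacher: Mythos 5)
Your proof is correct. The paper itself gives no argument for this statement --- it is quoted as \cite[Lemma 2.1]{IdzSloWro09} --- and your reduction to the claim that a nontrivial $\mu$-pair must contain $1$, proved by pushing the monolith below both principal congruences $\Theta_{\mathbf{A}}(1,a)$ and $\Theta_{\mathbf{A}}(1,b)$ and then invoking congruence orderability, is exactly the natural (and, in the cited source, essentially the standard) argument; your closing observation that only congruence orderability and subdirect irreducibility are needed, not $1$-regularity, is also accurate and matches the hypotheses as stated.
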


Hence, if $\mathbf{A}$ is a subdirectly irreducible congruence orderable
algebra, then there is the unique non-unit element in the unique non trivial
coset of the monolith of $\mathbf{A}$. Clearly, this element is the largest
non-unit element in $A$. If $\mathbf{A}$ is Fregean, then the reverse is true:
if there is the largest non-unit element in $A$, then $\mathbf{A}$ is
subdirectly irreducible.

Observe that for an algebra $\mathbf{A}$, it follows directly from the
definition of orderability that $\mathrm{H}(\mathbf{A})$, the class of all
homomorphic images (quotient algebras) of $\mathbf{A}$, is congruence
orderable if and only if for every $a,b\in A$ and $\alpha\in\mathsf{Con}%
\left(  \mathbf{A}\right)  $ we have $\alpha\vee\Theta_{\mathbf{A}}\left(
1,a\right)  =\alpha\vee\Theta_{\mathbf{A}}\left(  1,b\right)  $ iff
$(a,b)\in\alpha$ \cite[Lemma 3.3]{IdzSlo01}. What is more, we can characterize
this property in a simple way.

\begin{proposition}
\label{altern}The following conditions are equivalent:

\begin{enumerate}
\item $\mathrm{H}(\mathbf{A})$ is congruence orderable with respect to $1$;

\item for all $\varphi\in\mathsf{Cm}\left(  \mathbf{A}\right)  $ and $a,b\in
A$
\[
\left(  a,b\right)  \in\varphi^{+}\text{ implies }\left\{  \left(  a,b\right)
,\left(  1,a\right)  ,\left(  1,b\right)  \right\}  \cap\varphi\neq
\emptyset\text{.}%
\]

\end{enumerate}
\end{proposition}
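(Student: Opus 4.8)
The plan is to prove the equivalence by unwinding the characterization of orderability of $\mathrm{H}(\mathbf{A})$ already recalled just before the statement, namely that $\mathrm{H}(\mathbf{A})$ is congruence orderable if and only if for all $a,b\in A$ and all $\alpha\in\mathsf{Con}(\mathbf{A})$ we have $\alpha\vee\Theta_{\mathbf{A}}(1,a)=\alpha\vee\Theta_{\mathbf{A}}(1,b)$ iff $(a,b)\in\alpha$. Writing $\theta_x:=\Theta_{\mathbf{A}}(1,x)$, the nontrivial content is the forward implication: from equal joins, conclude $(a,b)\in\alpha$. My strategy is to test the condition one completely meet irreducible congruence at a time, using the fact that an arbitrary congruence is the meet of the completely meet irreducibles above it.

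First I would prove $(1)\Rightarrow(2)$. Fix $\varphi\in\mathsf{Cm}(\mathbf{A})$ and $a,b\in A$ with $(a,b)\in\varphi^{+}$, and suppose toward a contradiction that none of $(a,b),(1,a),(1,b)$ lies in $\varphi$. From $(1,a),(1,b)\notin\varphi$ I get $\theta_a\vee\varphi>\varphi$ and $\theta_b\vee\varphi>\varphi$; since $\varphi$ is completely meet irreducible with unique upper cover via $\varphi^{+}$, each of these joins is $\geq\varphi^{+}$, hence equal to $\varphi^{+}$, so $\varphi\vee\theta_a=\varphi\vee\theta_b$. The orderability characterization with $\alpha=\varphi$ then forces $(a,b)\in\varphi$, contradicting our assumption. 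This is the clean direction.

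For $(2)\Rightarrow(1)$ I would verify the join characterization directly. The implication $(a,b)\in\alpha\Rightarrow \alpha\vee\theta_a=\alpha\vee\theta_b$ is automatic, so I must show the converse. Assume $\alpha\vee\theta_a=\alpha\vee\theta_b$ and suppose $(a,b)\notin\alpha$. Then there is $\varphi\in\mathsf{Cm}(\mathbf{A})$ with $\alpha\leq\varphi$ and $(a,b)\notin\varphi$. I would like to apply (2) at $\varphi$, which requires $(a,b)\in\varphi^{+}$ and $(1,a),(1,b)\notin\varphi$; then (2) gives a member of $\{(a,b),(1,a),(1,b)\}$ in $\varphi$, a contradiction. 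The point to extract from $\alpha\vee\theta_a=\alpha\vee\theta_b$ is precisely that $a$ and $b$ behave identically relative to $\varphi$ modulo $1$: joining $\varphi$ to the assumed equality gives $\varphi\vee\theta_a=\varphi\vee\theta_b$, so $(1,a)\in\varphi$ iff $(1,b)\in\varphi$ (each side amounts to whether that join stays at $\varphi$).

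The main obstacle is the bookkeeping in this last step: handling the case $(1,a)\in\varphi$ (equivalently $(1,b)\in\varphi$), where one cannot invoke (2). In that case both $\theta_a,\theta_b\leq\varphi$, so $a$ and $b$ are each $1$ modulo $\varphi$ and hence $(a,b)\in\varphi$, directly contradicting the choice of $\varphi$. In the remaining case $(1,a),(1,b)\notin\varphi$, I need $(a,b)\in\varphi^{+}$ to license (2); since $\varphi\vee\theta_a=\varphi\vee\theta_b=\varphi^{+}$ forces $a,b$ congruent to $1$ modulo $\varphi^{+}$, we indeed have $(a,b)\in\varphi^{+}$, and (2) yields the contradiction. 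Throughout I would lean on $1$-regularity only implicitly and use modularity together with the covering property of $\varphi^{+}$ to keep the joins pinned at $\varphi$ or $\varphi^{+}$; making sure the two exhaustive cases genuinely cover every $\varphi$ is where care is required.
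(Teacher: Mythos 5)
Your plan founders on one recurring invalid inference: from $\varphi\vee\Theta_{\mathbf{A}}(1,x)>\varphi$ and $\varphi\in\mathsf{Cm}(\mathbf{A})$ you conclude that $\varphi\vee\Theta_{\mathbf{A}}(1,x)=\varphi^{+}$. What actually follows is only $\varphi\vee\Theta_{\mathbf{A}}(1,x)\geq\varphi^{+}$; the join can be strictly larger. Concretely, in the three-element Heyting chain $0<c<1$ the congruence $\Delta$ is completely meet irreducible with $\Delta^{+}$ collapsing exactly $\{c,1\}$, yet $\Delta\vee\Theta(1,0)=\mathbf{1}$, not $\Delta^{+}$. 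In your direction $(1)\Rightarrow(2)$ this flaw is visible in another way: you never use the hypothesis $(a,b)\in\varphi^{+}$, so if your argument were sound it would prove the unconditional statement that $\{(a,b),(1,a),(1,b)\}$ always meets $\varphi$, which is false (the same chain with $\varphi=\Delta$, $a=0$, $b=c$ is a counterexample). This direction is repairable along your lines, though: since $(1,a),(1,b)\notin\varphi$ give $\varphi\vee\Theta(1,a),\,\varphi\vee\Theta(1,b)\geq\varphi^{+}$, both joins absorb $\varphi^{+}$, and then $(a,b)\in\varphi^{+}$ (the hypothesis you skipped) yields $\varphi\vee\Theta(1,a)=\varphi^{+}\vee\Theta(1,a)=\varphi^{+}\vee\Theta(1,b)=\varphi\vee\Theta(1,b)$, after which the orderability characterization with $\alpha=\varphi$ finishes as you intended. (The paper instead passes to the subdirectly irreducible quotient $\mathbf{A}/\varphi$ and invokes Proposition \ref{natord-mu}; your patched route is a genuine alternative.)

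In the direction $(2)\Rightarrow(1)$ the gap is fatal as written, because $(a,b)\in\varphi^{+}$ is exactly what you must secure before you may invoke $(2)$, and you manufacture it from the same false equality $\varphi\vee\Theta(1,a)=\varphi\vee\Theta(1,b)=\varphi^{+}$. If you pick $\varphi$ merely as some completely meet irreducible congruence above $\alpha$ omitting $(a,b)$ (Birkhoff), there is no reason for $(a,b)$ to lie in $\varphi^{+}$, and without that, the case $(1,a),(1,b)\notin\varphi$ cannot be closed: $(2)$ simply does not apply. The paper's proof avoids this by choosing $\varphi$ \emph{maximal} among congruences containing $\alpha$ and omitting $(a,b)$ (Zorn's lemma): maximality forces every congruence strictly above $\varphi$ to contain $(a,b)$, so $\varphi$ is completely meet irreducible with $\varphi^{+}=\varphi\vee\Theta_{\mathbf{A}}(a,b)\ni(a,b)$, and then your case analysis (which is otherwise correct and matches the paper's) goes through. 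So: replace the bare Birkhoff selection by the maximal choice, and delete every claim that the joins equal $\varphi^{+}$, keeping only $\geq\varphi^{+}$ plus absorption.
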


\begin{proof}
$(1)\Rightarrow(2)$. Let $\varphi\in\mathsf{Cm}\left(  \mathbf{A}\right)  $,
$a,b\in A$ and $\left(  a,b\right)  \in\varphi^{+}$. Then $\mathbf{A}/\varphi$
is subdirectly irreducible with the monolith $\varphi^{+}/\varphi$. Assuming
that $\left(  a,b\right)  \notin\varphi$, we get $\{a/\varphi,b/\varphi\}$ is
a two-element and $(a/\varphi,b/\varphi)\in\varphi^{+}/\varphi$. By
Proposition \ref{natord-mu}, $1/\varphi\in\{a/\varphi,b/\varphi\}$, and so
$\left(  1,a\right)  \in\varphi$ or $\left(  1,b\right)  \in\varphi$.

$(2)\Rightarrow(1)$. Choose $\alpha\in\mathsf{Con}\left(  \mathbf{A}\right)  $
and $a,b\in A$ such that $\alpha\vee\Theta_{\mathbf{A}}\left(  1,a\right)
=\alpha\vee\Theta_{\mathbf{A}}\left(  1,b\right)  $. Suppose that $\left(
a,b\right)  \notin\alpha$. Then we can find $\varphi\in\mathsf{Cm}\left(
\mathbf{A}\right)  $ such that $\alpha\leq\varphi$ and $\left(  a,b\right)
\in\varphi^{+}\backslash\varphi$. Thus $\varphi\vee\Theta_{\mathbf{A}}\left(
1,a\right)  =\varphi\vee\Theta_{\mathbf{A}}\left(  1,b\right)  $, and either
$\left(  1,a\right)  \in\varphi$ or $\left(  1,b\right)  \in\varphi$. Hence
$\left(  a,b\right)  \in\varphi$, a contradiction.
\end{proof}

Moreover, observe that if $\mathbf{A}$ is $1$-regular, then it is easy to
check that $\mathrm{H}(\mathbf{A})$ is $1-$regular as well. In consequence,
$\mathrm{H}(\mathbf{A})$ is Fregean iff $\mathrm{H}(\mathbf{A})$ is congruence
orderable and $\mathbf{A}$ is $1$-regular. It follows from \cite{Hag73} that
$1$-regular varieties (and so Fregean varieties) are congruence modular. While
we need congruence modularity in our applications, the assumption that the
whole variety generated by $\mathbf{A}$ is $1$-regular seems to be too strong.
Thus, in the sequel, we shall assume that $\mathrm{H}(\mathbf{A})$ is Fregean,
but the variety generated by $\mathbf{A}$ is just congruence modular. From
\cite[Theorem 2.3]{IdzSloWro09} it follows that all such algebras satisfy the
condition (SC1):

\begin{theorem}
\label{FreSC1}Let $\mathbf{A}$ be an algebra from a congruence modular variety
and let $\mathrm{H}(\mathbf{A})$ is Fregean. Then $\mathbf{A}$ satisfies
$(SC1)$.
\end{theorem}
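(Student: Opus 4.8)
The plan is to establish that the condition (SC1), namely $(\varphi:\varphi^{+})\leq\varphi^{+}$ for every $\varphi\in\mathsf{Cm}(\mathbf{A})$, follows from the hypothesis that $\mathrm{H}(\mathbf{A})$ is Fregean together with congruence modularity of the variety. Since the statement is attributed to \cite[Theorem 2.3]{IdzSloWro09}, the work is to reconstruct that argument in the present language. I would begin by reducing to the local picture at a single completely meet irreducible congruence $\varphi$. Fix $\varphi\in\mathsf{Cm}(\mathbf{A})$ and pass to the quotient $\mathbf{A}/\varphi$, which is subdirectly irreducible with monolith $\varphi^{+}/\varphi$. The centralizer condition $(\varphi:\varphi^{+})\leq\varphi^{+}$ translates, in the quotient, into the assertion that the centralizer of the monolith is contained in the monolith, i.e. the monolith is not central unless it coincides with the whole centralizing congruence below $\varphi^{+}$.

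First I would invoke Proposition \ref{natord-mu}: since $\mathbf{A}/\varphi$ is subdirectly irreducible and congruence orderable (as a member of $\mathrm{H}(\mathbf{A})$), its monolith $\varphi^{+}/\varphi$ has the special shape where $|1/\varphi^{+}\text{-coset}|=2$ inside $1/\varphi$ and all other cosets of the monolith over $\varphi$ are singletons. Concretely, there is a unique element $c\neq 1$ with $(1,c)$ generating the monolith modulo $\varphi$, and $c$ is the largest non-unit element of $\mathbf{A}/\varphi$. Next I would argue by contradiction: suppose $(\varphi:\varphi^{+})\nleq\varphi^{+}$, so that there is a pair $(a,b)\in(\varphi:\varphi^{+})\setminus\varphi^{+}$. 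The centralizing hypothesis $C(\varphi^{+},(\varphi:\varphi^{+});\varphi)$ supplies, via the term condition of commutator theory, a strong compatibility: any polynomial that collapses the monolith pair on one coordinate must do so uniformly. I would feed the distinguished generating pair $(1,c)$ of the monolith into the term condition together with the pair $(a,b)$ witnessing the centralizer, and use the orderability characterization of Proposition \ref{altern} — that $(x,y)\in\varphi^{+}$ forces one of $(x,y),(1,x),(1,y)$ into $\varphi$ — to pin down the images under the relevant polynomials.

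The heart of the matter, and the step I expect to be the main obstacle, is converting the abstract centralizer relation into a contradiction with the rigid two-element structure of the monolith coset. The term condition gives that for a polynomial $p$ with $p(1,\bar u)\mathrel{\varphi}p(c,\bar u)$ one also has $p(1,\bar v)\mathrel{\varphi}p(c,\bar v)$ whenever $\bar u\mathrel{(\varphi:\varphi^{+})}\bar v$; the difficulty is choosing $p$ and the tuples so that the conclusion forces the witnessing pair $(a,b)$ into $\varphi^{+}$ after all. I would exploit that $c$ is the \emph{largest} non-unit element (so the natural order is tightly constrained) to control which polynomials can separate $1$ from $c$ modulo $\varphi$, and then play this against $1$-regularity, which guarantees that distinct congruences have distinct $1$-cosets and hence prevents the centralizer from sneaking in extra collapsing without affecting $1/\varphi^{+}$. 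Assembling these, the centralizer pair must already lie in $\varphi^{+}$, contradicting the choice of $(a,b)$ and yielding $(\varphi:\varphi^{+})\leq\varphi^{+}$, which is exactly (SC1). Since $\varphi$ was arbitrary, the conclusion holds for all completely meet irreducible congruences, completing the proof.
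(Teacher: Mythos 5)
You have set up the right frame but not given a proof: everything after the reduction is a description of intentions, and the decisive step is missing. The reduction itself is fine --- passing to the subdirectly irreducible quotient $\mathbf{B}=\mathbf{A}/\varphi$ with monolith $\mu=\varphi^{+}/\varphi$, invoking Proposition \ref{natord-mu} to get $1/\mu=\{1,c\}$ with all other $\mu$-classes singletons, and supposing $(0:\mu)\nleq\mu$ for contradiction. But at the exact point where the contradiction has to be produced, you write only what you ``would'' do (``I would exploit\dots'', ``play this against $1$-regularity\dots'') and then assert the conclusion (``the centralizer pair must already lie in $\varphi^{+}$''); no term is exhibited, no tuples are chosen, and no computation is carried out. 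You flag this step yourself as ``the main obstacle''. Note also that the paper does not prove Theorem \ref{FreSC1} at all --- it quotes it from \cite{IdzSloWro09} --- so this core step is precisely the content a proof must supply, and it is absent. Finally, the feature you propose to exploit, that $c$ is the \emph{largest} non-unit element, is not what the argument turns on; what matters is that every $\mu$-class other than $\{1,c\}$ is a singleton.

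To make the gap concrete, here is the kind of argument that is needed. Suppose $\delta:=(0:\mu)\nleq\mu$. Since $\mathbf{B}$ is subdirectly irreducible, $\delta\geq\mu$, so $[\mu,\mu]\leq[\delta,\mu]=0$ and $\mu$ is Abelian. Since $\mathrm{H}(\mathbf{A})$ is Fregean (not merely congruence orderable), $\mathbf{B}$ is $1$-regular; as $\mu\leq\delta$ and $\delta\neq\mu$, this yields an element $e\in 1/\delta$ with $e\notin\{1,c\}$ --- this is where $1$-regularity genuinely enters, namely to place a witness of $\delta>\mu$ inside the $1$-class. Now let $d$ be a difference term for the congruence modular variety, so that $d(x,x,y)=y$ always and $d(x,y,y)=x$ whenever $(x,y)$ lies in an Abelian congruence. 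Then $d(1,1,e)=e$ and $d(c,1,e)\equiv_{\mu}d(1,1,e)=e$, hence $d(c,1,e)=e$ because $e/\mu$ is a singleton. The term condition $C(\delta,\mu;0)$, which is what $[\delta,\mu]=0$ amounts to in a modular variety, applied to the term $d$ with the $\delta$-pair $(e,1)$ in the third variable and the $\mu$-pairs $(c,1)$ and $(1,1)$ in the first two, transfers the equality $d(c,1,e)=d(1,1,e)$ to $d(c,1,1)=d(1,1,1)$, i.e.\ $c=1$ (using $d(c,1,1)=c$, as $\mu$ is Abelian), a contradiction. Some explicit computation of this sort --- a named term, chosen tuples, and the singleton $\mu$-classes doing the work --- is exactly what your proposal lacks.
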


Thus, it seems natural to distinguish this class of algebras.

\begin{definition}
We call an algebra $\mathbf{A}$ \emph{strongly Fregean} if:

\begin{itemize}
\item $\mathrm{H}(\mathbf{A})$ is Fregean;

\item $\mathcal{V}(\mathbf{A})$, the variety generated by $\mathbf{A}$, is
congruence modular.
\end{itemize}
\end{definition}

Note that if $\mathbf{A}$ is from Fregean variety, then $\mathbf{A}$ is
strongly Fregean. However, the reverse implication is not true. To show this,
it is enough to consider two-element lattice with the greatest element $1$.
From now on, unless otherwise stated, we shall assume that $\mathbf{A}$ is
strongly Fregean.

Now let us go back to the study of the relation PIP. We will show that each
equivalence class $\eta/\!\!\sim$ in $\mathsf{Cm}\left(  \mathbf{A}\right)  $,
supplemented with a unit element $\eta^{+}$, forms a Boolean group with the
complement of symmetric difference restricted to $\eta^{+}$.

Let $\varphi,\psi\in\mathsf{Cm}\left(  \mathbf{A}\right)  $ and $\varphi
^{+}=\psi^{+}$. Define
\[
\varphi\bullet\psi:=\left(  \varphi\div\psi\right)  ^{\prime}\cap\varphi
^{+}\text{.}%
\]

\begin{theorem}
\label{Boolean}Let $\mathbf{A}$ be strongly Fregean and let $U\in
\mathsf{Cm}\left(  \mathbf{A}\right)  /\!\!\sim$. Then $\left(  \overline
{U},\bullet\right)  $ is a Boolean group.
\end{theorem}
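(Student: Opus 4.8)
The plan is to realise $\bullet$ as the restriction of a single Boolean group operation on all subsets of $\eta^{+}$, and then to reduce the whole statement to a closure property. Throughout I regard each $\varphi\in U$ as the set of pairs $\{(a,b):(a,b)\in\varphi\}\subseteq\eta^{+}$, which is legitimate since $\varphi\leq\varphi^{+}=\eta^{+}$ by Lemma~\ref{centralmon}.2; because $\varphi\div\psi\subseteq\eta^{+}$ whenever $\varphi,\psi\leq\eta^{+}$, the given formula simplifies to $\varphi\bullet\psi=\eta^{+}\setminus(\varphi\div\psi)$, and this is the operation I extend to all of $\overline{U}$. I first dispose of the degenerate case: if $\eta^{+}$ is not Abelian over $\eta$, then Lemma~\ref{centralmon}.1 gives $U=\{\eta\}$, so $\overline{U}=\{\eta,\eta^{+}\}$ with $\eta\bullet\eta=\eta^{+}$ and $\eta\bullet\eta^{+}=\eta$, a copy of $\mathbb{Z}_{2}$. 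Hence I may assume $[\eta^{+},\eta^{+}]\leq\varphi$ for every $\varphi\in U$.

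For the ambient group, define $X\bullet Y:=\eta^{+}\setminus(X\div Y)$ on the power set of $\eta^{+}$. Passing to indicator functions over $\mathbb{Z}_{2}$ one gets
\[
\chi_{X\bullet Y}=\mathbf{1}+\chi_{X}+\chi_{Y}\qquad(\mathbf{1}:=\chi_{\eta^{+}}),
\]
from which associativity, commutativity, the identity law $X\bullet\eta^{+}=X$, and the involution law $X\bullet X=\eta^{+}$ are immediate; equivalently $X\mapsto\eta^{+}\setminus X$ is an isomorphism onto $(\mathcal{P}(\eta^{+}),\div)$. Thus $(\mathcal{P}(\eta^{+}),\bullet)$ is a Boolean group with neutral element $\eta^{+}$, and any subset containing $\eta^{+}$ and closed under $\bullet$ is automatically a subgroup (inverses being free by the involution law). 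Since $\eta^{+}\in\overline{U}$, the theorem reduces to showing that $\overline{U}$ is closed under $\bullet$.

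For closure I take distinct $\varphi,\psi\in U$ (the cases $\varphi=\psi$ or a factor equal to $\eta^{+}$ being trivial), put $\delta:=\varphi\wedge\psi$ and $\theta:=\varphi\bullet\psi=\eta^{+}\setminus(\varphi\div\psi)$, and must prove $\theta\in U$. Since $\varphi\prec\eta^{+}$ and $\psi\prec\eta^{+}$ share the cover $\eta^{+}$, modularity gives $\varphi\vee\psi=\eta^{+}$ and $\delta\prec\varphi,\psi$, while $[\eta^{+},\eta^{+}]\leq\varphi\wedge\psi=\delta$. Passing to $\mathbf{B}:=\mathbf{A}/\delta$, the congruence $\bar\eta^{+}=\eta^{+}/\delta$ is Abelian over $0$ with atoms $\bar\varphi,\bar\psi$ satisfying $\bar\varphi\wedge\bar\psi=0$ and $\bar\varphi\vee\bar\psi=\bar\eta^{+}$. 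By commutator theory the class $G:=\bar1/\bar\eta^{+}$ is an abelian group with neutral element $\bar1$, and, using that $\mathbf{B}\in\mathrm{H}(\mathbf{A})$ is $1$-regular, $\bar\alpha\mapsto\bar1/\bar\alpha$ embeds $[0,\bar\eta^{+}]$ into the subgroup lattice of $G$. Proposition~\ref{natord-mu} forces $\bar1/\bar\varphi$ and $\bar1/\bar\psi$ to have index $2$; together with $\bar\varphi\wedge\bar\psi=0$ and $\bar\varphi\vee\bar\psi=\bar\eta^{+}$ this yields $G\cong\mathbb{Z}_{2}^{2}$, and $\bar1/\theta=G\setminus(\bar1/\bar\varphi\div\bar1/\bar\psi)$ is the third index-$2$ subgroup.

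The main obstacle is exactly here: a priori $\theta$ is only a symmetric reflexive set of pairs, and I must see that this third subgroup is a \emph{submodule}, so that $\theta$ is a genuine congruence. Writing $\bar1/\bar\varphi=\{\bar1,v\}$ and $\bar1/\bar\psi=\{\bar1,w\}$, this comes down to the two simple modules $\bar\varphi,\bar\psi$ being isomorphic, which I obtain from $\varphi\sim\psi$: the intervals $I[\delta,\varphi]$ and $I[\delta,\psi]$ transpose up to $I[\psi,\eta^{+}]$ and $I[\varphi,\eta^{+}]$, and these last two are projective; since projectivity preserves the induced module (as used in the proof of Lemma~\ref{centralmon}), the ring action fixes the diagonal and $\theta$ is a congruence with $\theta\wedge\varphi=\delta$, $\theta\vee\varphi=\eta^{+}$, $\theta\prec\eta^{+}$. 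It remains to place $\theta$ in $U$. To get $\theta\in\mathsf{Cm}(\mathbf{A})$ I would show $\mathbf{A}/\theta$ is subdirectly irreducible: it is Fregean (being in $\mathrm{H}(\mathbf{A})$), and the unique nontrivial class of the Abelian atom $\eta^{+}/\theta$ supplies a largest non-unit element, so the criterion after Proposition~\ref{natord-mu} applies and $\theta^{+}=\eta^{+}$. Finally $I[\theta,\eta^{+}]\searrow I[\delta,\psi]\nearrow I[\varphi,\eta^{+}]$ witnesses $\theta\sim\varphi$, so $\theta\in\eta/\!\!\sim\,=U$. This establishes closure, and with the second paragraph $(\overline{U},\bullet)$ is a subgroup of $(\mathcal{P}(\eta^{+}),\bullet)$, hence a Boolean group.
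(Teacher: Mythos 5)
Your reduction of the theorem to closure of $\overline{U}$ under $\bullet$ (via the ambient Boolean group on subsets of $\eta^{+}$) is fine, as is the degenerate non-Abelian case, and your analysis in $\mathbf{B}=\mathbf{A}/\delta$ — that $G=\bar{1}/\bar{\eta}^{+}$ is an abelian group via the difference term, that $1$-regularity embeds congruences below $\bar{\eta}^{+}$ into its subgroups, and that Proposition \ref{natord-mu} forces $G\cong\mathbb{Z}_{2}^{2}$ — is essentially sound. But the step you yourself flag as ``the main obstacle'' is never actually proved, and it is precisely where the entire content of the theorem lies. What you need is: for every unary polynomial $p$ of $\mathbf{B}$ mapping $G$ into $G$, the induced endomorphism $\hat{p}$ of $G$ satisfies $\hat{p}(v)=0$ if and only if $\hat{p}(w)=0$, i.e.\ the two-element modules $\bar{1}/\bar{\varphi}$ and $\bar{1}/\bar{\psi}$ have equal annihilators under the ring of polynomial-induced endomorphisms of $G$. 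Your sole justification is the clause ``projectivity preserves the induced module (as used in the proof of Lemma \ref{centralmon})''. This misquotes that proof: Lemma \ref{centralmon} uses only that projectivity preserves centralizers and Abelianness, which is far weaker and says nothing about polynomial actions on blocks. Projectivity of $I[\varphi,\eta^{+}]$ and $I[\psi,\eta^{+}]$ is a purely lattice-theoretic hypothesis; converting it into an equivariant isomorphism of induced modules is a substantial piece of commutator theory (in the spirit of Freese--McKenzie similarity), complicated by the fact that the modules naturally attached to the transposed quotients live in the different algebras $\mathbf{B}/\bar{\varphi}$ and $\mathbf{B}/\bar{\psi}$, while your claim concerns polynomials of $\mathbf{B}$ itself acting on the single block $G$, and by the fact that the projectivity chain between $I[\varphi,\eta^{+}]$ and $I[\psi,\eta^{+}]$ passes through arbitrary intervals $I[\alpha,\beta]$, not Abelian prime quotients. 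None of this is supplied. The paper sidesteps this difficulty entirely: it never attempts to verify that the set $\varphi\bullet\psi$ is compatible with the operations, but instead first manufactures an honest congruence $\gamma=(\varphi\wedge\psi)\vee\beta$ from the transposition given by Lemma \ref{triplearrow}, and only then proves the set equality $\gamma=\varphi\bullet\psi$, applying orderability (Proposition \ref{altern}) and $1$-regularity inside the quotient $\mathbf{A}/\gamma$ — that is, the Fregean hypotheses are used on a quotient by a congruence already known to exist.

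There is a second, smaller gap: your argument that $\theta\in\mathsf{Cm}(\mathbf{A})$ is circular. You assert that the non-unit element of the unique nontrivial $\eta^{+}/\theta$-class ``supplies a largest non-unit element'' of $\mathbf{A}/\theta$; but being largest in the natural order means exactly that its principal congruence $\eta^{+}/\theta$ lies below every nonzero congruence of $\mathbf{A}/\theta$, i.e.\ that $\mathbf{A}/\theta$ is subdirectly irreducible with monolith $\eta^{+}/\theta$ — which is the very thing to be established. Unlike the first gap, this one is repairable with the paper's own technique: write $\theta=\bigwedge\Gamma$ with $\Gamma\subseteq\mathsf{Cm}(\mathbf{A})$ by Birkhoff's theorem, use $\operatorname{card}I[\theta,\eta^{+}]=2$ to pick $\mu\in\Gamma$ with $\eta^{+}\nleq\mu$, and argue as in Step 2 of the paper's proof that $\theta=\mu\in U$. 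But without a genuine proof that the set $\theta$ is a congruence in the first place, the proposal does not establish the theorem.
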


\begin{proof}
Let $\eta\in\mathsf{Cm}\left(  \mathbf{A}\right)  $ such that $U=\eta
/\!\!\sim$, and $\varphi,\psi\in U$. It follows from Lemma \ref{centralmon}.2
that $\varphi^{+}=\psi^{+}=\eta^{+}$. It suffices to show that $\varphi
\bullet\psi\in U$ for $\varphi\neq\psi$, as $\varphi\bullet\varphi=\eta^{+}$.
By Theorem \ref{FreSC1} and Lemma \ref{triplearrow}, there exist $\alpha
,\beta\in\mathsf{Con}\left(  \mathbf{A}\right)  $ such that $I\left[
\varphi,\eta^{+}\right]  \searrow I\left[  \alpha,\beta\right]  \nearrow
I\left[  \psi,\eta^{+}\right]  $. Put $\gamma:=\left(  \varphi\wedge
\psi\right)  \vee\beta$. It is enough to show that $\varphi\bullet\psi=\gamma$
and $\gamma\in U$.

Step 1. Clearly $\alpha\leq\varphi\wedge\psi$, and so, by modularity,
$\varphi\wedge\gamma=\left(  \varphi\wedge\psi\right)  \vee(\varphi\wedge
\beta)=\left(  \varphi\wedge\psi\right)  \vee\alpha=\varphi\wedge\psi$.
Analogously, $\psi\wedge\gamma=\varphi\wedge\psi$. Hence $\gamma\subset
\varphi\bullet\psi$, since $\varphi\wedge\psi=\varphi\wedge\gamma=\psi
\wedge\gamma$. Moreover $\varphi\vee\gamma=\eta^{+}=\psi\vee\gamma$.
Consequently, $I\left[  \varphi,\eta^{+}\right]  \searrow I\left[
\varphi\wedge\psi,\gamma\right]  \nearrow I\left[  \psi,\eta^{+}\right]  $ and
$I\left[  \varphi,\eta^{+}\right]  \searrow I\left[  \varphi\wedge\psi
,\psi\right]  \nearrow I\left[  \gamma,\eta^{+}\right]  $. Now, we show that
$\gamma\supset\varphi\bullet\psi$. On the contrary, suppose that there exists
$\left(  x,y\right)  \in\left(  \varphi\bullet\psi\right)  \backslash\gamma$.
Then $\left(  x,y\right)  \in\eta^{+}\backslash(\varphi\cup\psi)$. Applying
Proposition \ref{altern} we deduce that $\operatorname*{card}\left(  \left\{
\left(  1,x\right)  ,\left(  1,y\right)  \right\}  \cap\varphi\right)
=\operatorname*{card}\left(  \left\{  \left(  1,x\right)  ,\left(  1,y\right)
\right\}  \cap\psi\right)  =1$, and so $\left(  1,x\right)  ,\left(
1,y\right)  \in\eta^{+}$. In consequence, $\gamma\vee\Theta_{\mathbf{A}%
}\left(  1,x\right)  ,\gamma\vee\Theta_{\mathbf{A}}\left(  1,y\right)  \in
I\left[  \gamma,\eta^{+}\right]  $. From projectivity, we have
$\operatorname*{card}I\left[  \gamma,\eta^{+}\right]  =\operatorname*{card}%
I\left[  \varphi,\eta^{+}\right]  =2$. From orderability of $\mathbf{A/}%
\gamma\in\mathrm{H}(\mathbf{A})$ we get $\gamma\vee\Theta_{\mathbf{A}}\left(
1,x\right)  \neq\gamma\vee\Theta_{\mathbf{A}}(1,y)$, since otherwise $\left(
x,y\right)  \in\gamma$, a contradiction. Thus either $\gamma\vee
\Theta_{\mathbf{A}}\left(  1,x\right)  =\gamma$ or $\gamma\vee\Theta
_{\mathbf{A}}\left(  1,y\right)  =\gamma$. Without loss of generality we can
assume that $\gamma\vee\Theta_{\mathbf{A}}\left(  1,x\right)  =\gamma$, and so
$\left(  1,x\right)  \in\gamma$. Then $\left(  1,y\right)  \notin\gamma$.
Hence $\left(  1,x\right)  \in\gamma\wedge\varphi\wedge\psi$, and so $\left(
1,y\right)  \notin\gamma\cup\varphi\cup\psi$. From $1$-regularity there exists
$s\in A$ such that $\left(  1,s\right)  \in\gamma\backslash\left(
\varphi\wedge\psi\right)  $. Then $\left(  1,s\right)  \notin\varphi\cup\psi$,
and we can apply again Proposition \ref{altern} obtaining $\left(  s,y\right)
\in\varphi\wedge\psi\leq\gamma$. Hence $\left(  1,y\right)  \in\gamma$, a contradiction.

Step 2. We show that $\gamma\in U$. Let $\gamma=\bigwedge\Gamma$, where
$\Gamma\subset\mathsf{Cm}\left(  \mathbf{A}\right)  $. Since
$\operatorname*{card}I\left[  \gamma,\eta^{+}\right]  =2$, there exists
$\mu\in\Gamma$ such that $\eta^{+}\nleq\mu$, and so $\gamma=\eta^{+}\wedge\mu
$. Hence $I\left[  \gamma,\eta^{+}\right]  \nearrow I\left[  \mu,\eta^{+}%
\vee\mu\right]  $. From $\operatorname*{card}I\left[  \mu,\eta^{+}\vee
\mu\right]  =2$ we get $\mu^{+}=\eta^{+}\vee\mu$. As the intervals $I\left[
\varphi,\eta^{+}\right]  $, $I\left[  \gamma,\eta^{+}\right]  $ are
projective, we deduce that $\mu\sim\varphi$. Lemma \ref{centralmon}.1 gives us
$\eta^{+}=\mu^{+}$. Thus $\gamma=\mu^{+}\wedge\mu=\mu\in U$, as desired.
\end{proof}

\begin{corollary}
\label{iff}Let $\mathbf{A}$ be strongly Fregean, $\varphi,\psi\in
\mathsf{Cm}\left(  \mathbf{A}\right)  $, $\varphi\neq\psi$ and $\varphi
^{+}=\psi^{+}$. Then the following conditions are equivalent:

\begin{enumerate}
\item $\varphi\sim\psi$;

\item $\varphi\bullet\psi\in\mathsf{Cm}\left(  \mathbf{A}\right)  $;

\item $\varphi\bullet\psi\in\mathsf{Con}\left(  \mathbf{A}\right)  $.
\end{enumerate}
\end{corollary}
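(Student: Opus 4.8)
The plan is to prove the cycle $(1)\Rightarrow(2)\Rightarrow(3)\Rightarrow(1)$. The implication $(1)\Rightarrow(2)$ is essentially already contained in Theorem \ref{Boolean}: if $\varphi\sim\psi$ then $\varphi,\psi$ lie in a common class $U=\eta/\!\!\sim$, and the computation in that proof shows that $\varphi\bullet\psi$ equals the congruence $\gamma=(\varphi\wedge\psi)\vee\beta\in U\subseteq\mathsf{Cm}(\mathbf{A})$. The implication $(2)\Rightarrow(3)$ is immediate, since $\mathsf{Cm}(\mathbf{A})\subseteq\mathsf{Con}(\mathbf{A})$. Thus the whole content of the corollary lies in $(3)\Rightarrow(1)$.

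For $(3)\Rightarrow(1)$ I would first record the set-theoretic description of $\gamma:=\varphi\bullet\psi$. Since $(\varphi\div\psi)'$ consists exactly of the pairs on which membership in $\varphi$ and in $\psi$ agree, one has $\gamma=(\varphi\wedge\psi)\cup(\varphi^{+}\setminus(\varphi\cup\psi))$, and in particular $\gamma\subseteq\varphi^{+}$. Intersecting with $\varphi$ annihilates the second summand, so $\varphi\cap\gamma=\varphi\wedge\psi$, and symmetrically $\psi\cap\gamma=\varphi\wedge\psi$. Since meets in $\mathsf{Con}(\mathbf{A})$ are intersections, assuming as in $(3)$ that $\gamma$ is a congruence we obtain $\varphi\wedge\gamma=\psi\wedge\gamma=\varphi\wedge\psi$.

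The heart of the argument is to show that $\gamma\nleq\varphi$, equivalently that $\varphi^{+}\setminus(\varphi\cup\psi)\neq\emptyset$. Here I would invoke Proposition \ref{natord-mu}. Write $T:=1/\varphi^{+}=1/\psi^{+}$. The monolith $\varphi^{+}/\varphi$ of the subdirectly irreducible $\mathbf{A}/\varphi$ splits $T$ into exactly two $\varphi$-classes, $1/\varphi$ and a second class $Q$, and likewise $\psi$ splits $T$ into $1/\psi$ and a second class $S$; moreover $\varphi$ and $\psi$ both coincide with $\varphi^{+}$ off $T\times T$, so any witness to $\varphi^{+}\setminus(\varphi\cup\psi)$ must be sought inside $T$. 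As $\varphi\neq\psi$, $1$-regularity forces $1/\varphi\neq1/\psi$, while $1$ lies in both $1/\varphi$ and $1/\psi$. Comparing the two two-block partitions of $T$ then produces a pair separated by $\varphi$ and by $\psi$ simultaneously: if $Q\cap S\neq\emptyset$ take $x=1$ and $y\in Q\cap S$; otherwise $Q\subseteq1/\psi$ and $S\subseteq1/\varphi$, and one pairs an element of $S$ with an element of $Q$. Such a pair lies in $\varphi^{+}\setminus(\varphi\cup\psi)$, as required, so $\gamma\supsetneq\varphi\wedge\psi$.

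Finally, from $\gamma\leq\varphi^{+}$, $\gamma\nleq\varphi$ and the covering $\varphi\prec\varphi^{+}$ I conclude $\varphi\vee\gamma=\varphi^{+}$, and symmetrically $\psi\vee\gamma=\psi^{+}=\varphi^{+}$. Combined with the meet identities, this yields the transpositions
\[
I[\varphi,\varphi^{+}]\searrow I[\varphi\wedge\psi,\gamma]\nearrow I[\psi,\psi^{+}],
\]
so the two prime intervals are projective and $\varphi\sim\psi$. The main obstacle is the middle step: one must exploit the precise two-element structure of the monolith cosets from Proposition \ref{natord-mu} together with $1$-regularity to guarantee the ``doubly separated'' pair. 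This is genuinely necessary, since incomparable equivalence relations can have a transitive union when their nontrivial parts sit on disjoint supports; what saves the argument is that both nontrivial parts live inside the single class $T$ and share the element $1$.
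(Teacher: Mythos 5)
Your proof is correct, and its skeleton coincides with the paper's: $(1)\Rightarrow(2)$ via Theorem \ref{Boolean}, $(2)\Rightarrow(3)$ trivially, and $(3)\Rightarrow(1)$ by exhibiting a pair in $\varphi^{+}\setminus(\varphi\cup\psi)$, noting $\varphi\wedge\gamma=\psi\wedge\gamma=\varphi\wedge\psi$ and $\varphi\vee\gamma=\psi\vee\gamma=\varphi^{+}$, and reading off the transpositions
\[
I[\varphi,\varphi^{+}]\searrow I[\varphi\wedge\psi,\gamma]\nearrow I[\psi,\psi^{+}].
\]
The one place you genuinely diverge is how the doubly separated pair is produced. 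The paper gets it from $1$-regularity alone: $\varphi$ and $\psi$ are incomparable (if $\varphi<\psi$ then $\psi\geq\varphi^{+}=\psi^{+}>\psi$), and in a $1$-regular algebra $1/\alpha\subseteq 1/\beta$ forces $\alpha\leq\beta$ (since $1/(\alpha\wedge\beta)=1/\alpha$ gives $\alpha\wedge\beta=\alpha$); hence there exist $a,b$ with $(1,a)\in\varphi\setminus\psi$ and $(1,b)\in\psi\setminus\varphi$, and transitivity puts $(a,b)$ in $\varphi^{+}\setminus(\varphi\cup\psi)$. You instead invoke Proposition \ref{natord-mu} to obtain the two-block partitions of $T=1/\varphi^{+}$ and argue by cases; this is sound (your Case 2 pair is exactly the paper's pair $(a,b)$, and Case 1 is handled by $(1,y)$ with $y\in Q\cap S$), but it is heavier machinery than needed, since the monolith coset structure can be bypassed entirely by the order-reflection property of $1$-classes just mentioned. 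What your version buys is explicitness: it makes visible that the nontrivial parts of both $\varphi$ and $\psi$ live inside the single block $T$ and share the element $1$, which is the geometric reason the argument cannot fail; the paper's version is shorter but leaves the existence of $a$ and $b$ as an unjustified ``observe.''
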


\begin{proof}
$(1)\Rightarrow(2)$ follows form the theorem above, and $(2)\Rightarrow(3)$ is
obvious. To prove $(3)\Rightarrow(1)$, observe that there exist $a,b\in A$
such that $(1,a)\in\varphi\backslash\psi$ and $(1,b)\in\psi\backslash\varphi$.
Hence $(a,b)\notin\varphi\cup\psi$ and $(a,b)\in\varphi^{+}$. Consequently,
$(a,b)\in\varphi\bullet\psi$, and so $\left(  \varphi\bullet\psi\right)
\vee\varphi=\varphi^{+}=\psi^{+}=\left(  \varphi\bullet\psi\right)  \vee\psi$.
Thus $I\left[  \varphi,\varphi^{+}\right]  \searrow I\left[  \varphi\wedge
\psi,\varphi\bullet\psi\right]  \nearrow I\left[  \psi,\psi^{+}\right]  $, as desired.
\end{proof}

The following lemma and theorem describe the location of the equivalence
classes of the relation PIP in the congruence lattice.

\begin{lemma}
\label{atom}Let $\mathbf{A}$ be strongly Fregean, $\mu\in\mathsf{Cm}\left(
\mathbf{A}\right)  $, $U=\mu/\!\!\sim$, and $a\in A$. If $\left(  1,a\right)
\in\mu^{+}\backslash0_{U}$, then $\Theta_{\mathbf{A}}\left(  1,a\right)
\vee0_{U}$ is an atom in $\left[  0_{U},\mu^{+}\right]  $.
\end{lemma}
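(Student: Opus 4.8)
The plan is to show that \(\delta:=\Theta_{\mathbf A}(1,a)\vee 0_U\) is an atom of \([0_U,\mu^+]\) by producing a single element \(\varphi\in U\) of this interval with \(\varphi\wedge\delta=0_U\) and \(\varphi\vee\delta=\mu^+\), and then transposing onto a prime interval. First note that \(\delta\) genuinely lies in \((0_U,\mu^+]\): since \((1,a)\in\mu^+\) we have \(\Theta_{\mathbf A}(1,a)\le\mu^+\), hence \(\delta\le\mu^+\), while \((1,a)\notin 0_U\) gives \(\delta>0_U\). Because \((1,a)\notin 0_U=\bigwedge U\), I may fix \(\varphi\in U\) with \((1,a)\notin\varphi\); by Lemma~\ref{centralmon}.2 we have \(\varphi^+=\mu^+\), so \(I[\varphi,\mu^+]\) is prime. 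Since \((1,a)\in\delta\setminus\varphi\) we get \(\delta\not\le\varphi\), and the defining property of \(\varphi^+\) then forces \(\varphi\vee\delta=\varphi^+=\mu^+\).

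The heart of the argument is the equality \(\varphi\wedge\delta=0_U\), for which it suffices to prove \(\varphi\wedge\delta\le\psi\) for every \(\psi\in U\), so that \(\varphi\wedge\delta\le\bigwedge U=0_U\) (the reverse inequality being automatic). I would split on whether \((1,a)\in\psi\). If \((1,a)\in\psi\), then \(\delta\le\psi\), and a fortiori \(\varphi\wedge\delta\le\psi\). The remaining case \((1,a)\notin\psi\), with \(\psi\ne\varphi\) (the subcase \(\psi=\varphi\) being trivial), is where the Boolean-group structure enters: by Theorem~\ref{Boolean} the product \(\varphi\bullet\psi\) lies in \(U\), and since \((1,a)\in\mu^+\) while \((1,a)\) lies in neither \(\varphi\) nor \(\psi\), the definition \(\varphi\bullet\psi=(\varphi\div\psi)'\cap\mu^+\) gives \((1,a)\in\varphi\bullet\psi\); hence \(\delta\le\varphi\bullet\psi\). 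Invoking the identity \(\varphi\wedge(\varphi\bullet\psi)=\varphi\wedge\psi\) established in Step~1 of the proof of Theorem~\ref{Boolean}, I obtain \(\varphi\wedge\delta\le\varphi\wedge(\varphi\bullet\psi)=\varphi\wedge\psi\le\psi\), as required.

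With \(\varphi\wedge\delta=0_U\) and \(\varphi\vee\delta=\mu^+\) in hand, the conclusion follows from modularity: the transposition \(I[0_U,\delta]=I[\varphi\wedge\delta,\delta]\nearrow I[\varphi,\varphi\vee\delta]=I[\varphi,\mu^+]\) is an isomorphism of intervals, and the right-hand interval is prime, so \(\delta\) covers \(0_U\); being an element of \([0_U,\mu^+]\) lying just above its bottom, \(\delta\) is therefore an atom. The main obstacle is the middle step — controlling \(\varphi\wedge\delta\) against \emph{all} \(\psi\in U\) at once. The tempting belief that \(\varphi\) is the \emph{unique} element of \(U\) omitting \((1,a)\) is false, since these elements form a coset of an index-two subgroup of \((\overline U,\bullet)\); one therefore genuinely needs the group operation to relocate each troublesome \(\psi\) via \(\varphi\bullet\psi\) into the region where \(\delta\) already sits, which is precisely the point at which Theorem~\ref{Boolean}, rather than bare lattice theory, is indispensable.
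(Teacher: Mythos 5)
Your proof is correct and follows essentially the same route as the paper's: fix $\varphi\in U$ omitting $(1,a)$, use Lemma~\ref{centralmon}.2 to get $\varphi^{+}=\mu^{+}$, use Theorem~\ref{Boolean} together with the identity $\varphi\wedge(\varphi\bullet\psi)=\varphi\wedge\psi$ to force $\varphi\wedge(\Theta_{\mathbf{A}}(1,a)\vee 0_{U})=0_{U}$, and finish by transposing $I\left[0_{U},\Theta_{\mathbf{A}}(1,a)\vee 0_{U}\right]$ up to the prime interval $I\left[\varphi,\mu^{+}\right]$. The only difference is presentational: the paper argues by contradiction with a witness pair $(x,y)\in\varphi\wedge\Theta_{\mathbf{A}}(1,a)\setminus 0_{U}$, whereas you verify $\varphi\wedge\delta\leq\psi$ directly for each $\psi\in U$ by a case split, which is the same argument in contrapositive form.
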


\begin{proof}
Since $\left(  1,a\right)  \notin0_{U}$, it follows that there exists
$\varphi\in U$ such that $\left(  1,a\right)  \notin\varphi$. Clearly, from
Lemma \ref{centralmon}.2 we have $\varphi^{+}=\mu^{+}$. We show that
$\varphi\wedge\Theta_{\mathbf{A}}\left(  1,a\right)  \leq0_{U}$. On the
contrary, suppose that there exists $(x,y)\in\varphi\wedge\Theta_{\mathbf{A}%
}\left(  1,a\right)  $ and $(x,y)\notin0_{U}$. Hence there exists $\psi\in U$
such that $(x,y)\notin\psi$. In consequence, $\psi\neq\varphi$ and $\left(
1,a\right)  \notin\psi$. From Theorem \ref{Boolean} we get $\left(
1,a\right)  \in\varphi\bullet\psi$, and so $(x,y)\in\varphi\wedge
(\varphi\bullet\psi)=\varphi\wedge\psi\leq\psi$, a contradiction. Thus, from
modularity, $\varphi\wedge(\Theta_{\mathbf{A}}\left(  1,a\right)  \vee
0_{U})=0_{U}$. In consequence, $I\left[  0_{U},\Theta_{\mathbf{A}}\left(
1,a\right)  \vee0_{U}\right]  \nearrow I\left[  \varphi,\varphi^{+}\right]  $.
Hence $\Theta_{\mathbf{A}}\left(  1,a\right)  \vee0_{U}$ covers $0_{U}$, as desired.
\end{proof}

\begin{theorem}
\label{location}Let $\mathbf{A}$ be strongly Fregean, $\mu\in\mathsf{Cm}%
\left(  \mathbf{A}\right)  $ and $U=\mu/\!\!\sim$. Then

\begin{enumerate}
\item $U=\left\{  \beta\in\mathsf{Con}\left(  \mathbf{A}\right)  :0_{U}%
\leq\beta\prec\mu^{+}\right\}  $ ;

\item $U=\mathsf{Cm}\left(  \mathbf{A}\right)  \cap I\left[  0_{U},\mu
^{+}\right)  $ ;

\item If $0_{U}\leq\alpha\in\mathsf{Con}\left(  \mathbf{A}\right)  $, then
either $\alpha\leq\mu^{+}$ or $\mu^{+}\leq\alpha$.
\end{enumerate}
\end{theorem}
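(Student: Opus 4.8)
The plan is to split on whether $\mu^{+}$ is Abelian over $\mu$, dispose of the degenerate case by hand, and then reduce all three assertions to a single structural fact about the interval $\left[0_{U},\mu^{+}\right]$. By Lemma \ref{centralmon}, if $\mu^{+}$ is not Abelian over $\mu$ then $U=\mu/\!\!\sim\,=\left\{\mu\right\}$, so $0_{U}=\mu$ and $\left[0_{U},\mu^{+}\right]=\left[\mu,\mu^{+}\right]$ is prime; here all three statements are immediate from the fact that $\mu^{+}$ is the least congruence strictly above $\mu$, since anything $\ge\mu=0_{U}$ either equals $\mu$ (hence $\le\mu^{+}$) or lies strictly above $\mu$ (hence $\ge\mu^{+}$). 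So I assume from now on that $\mu^{+}$ is Abelian over $\mu$; by Proposition \ref{abeovezer} this gives $\left[\mu^{+},\mu^{+}\right]\le 0_{U}$, i.e. $\left[0_{U},\mu^{+}\right]$ is an Abelian interval.

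I would prove (3) first, as it is the keystone, and it is almost purely lattice-theoretic. Take $0_{U}\le\alpha$ with $\alpha\nleq\mu^{+}$; I want $\mu^{+}\le\alpha$. For each $\varphi\in U$ we have $\varphi^{+}=\mu^{+}$ (Lemma \ref{centralmon}.2) and $\varphi\le\mu^{+}$, so $\alpha\nleq\varphi$ and hence $\alpha\vee\varphi>\varphi$; as $\varphi^{+}$ is the least congruence above $\varphi$, this forces $\alpha\vee\varphi\ge\mu^{+}$. Modularity (using $\varphi\le\mu^{+}$) then yields $\mu^{+}=\mu^{+}\wedge\left(\alpha\vee\varphi\right)=\left(\alpha\wedge\mu^{+}\right)\vee\varphi$, so $\delta:=\alpha\wedge\mu^{+}$ satisfies $\delta\vee\varphi=\mu^{+}$ for every $\varphi\in U$. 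Thus $\delta$ lies below no element of $U$ (if $\delta\le\varphi_{0}\in U$ then $\delta\vee\varphi_{0}=\varphi_{0}<\mu^{+}$). The proof of (3) is therefore complete once I know the geometric fact that every $\delta$ with $0_{U}\le\delta<\mu^{+}$ lies below some coatom belonging to $U$: this forces $\delta=\mu^{+}$, i.e. $\mu^{+}\le\alpha$.

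This geometric fact is the real obstacle, and the place where the Fregean hypotheses enter. I would establish it by analysing the Abelian interval via commutator theory: passing to $\mathbf{B}=\mathbf{A}/0_{U}$, the congruence $\theta=\mu^{+}/0_{U}$ satisfies $\left[\theta,\theta\right]=0_{\mathbf{B}}$, so by \cite{FreMcK87} the interval $\left[0_{\mathbf{B}},\theta\right]$ is isomorphic to the submodule lattice of a module; the orderability of $\mathrm{H}(\mathbf{A})$ (Proposition \ref{altern}), together with the Boolean group structure of $\overline{U}$ (Theorem \ref{Boolean}) and the description of atoms as $\Theta_{\mathbf{A}}\left(1,a\right)\vee0_{U}$ (Lemma \ref{atom}), should force this module to be a vector space over $\mathbb{Z}_{2}$. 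Hence $\left[0_{U},\mu^{+}\right]$ is the subspace lattice of a $\mathbb{Z}_{2}$-space: a complemented modular lattice in which every proper element lies below a coatom, the meet irreducible elements are exactly the coatoms, and — via a perspectivity $I\left[\beta,\mu^{+}\right]\searrow\cdots\nearrow I\left[\varphi,\mu^{+}\right]$ through a height-two subinterval — every coatom is PIP-equivalent to $\mu$, hence lies in $U$. I expect this identification of the coatoms with $U$ to be the most delicate point, since it is precisely what breaks the apparent circularity between (1) and (3).

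With (3) and this description in hand, (1) and (2) follow quickly. For (1) the inclusion $U\subseteq\left\{\beta:0_{U}\le\beta\prec\mu^{+}\right\}$ is immediate, as $\varphi\ge0_{U}$ and $\varphi\prec\varphi^{+}=\mu^{+}$; conversely, if $0_{U}\le\beta\prec\mu^{+}$ then (3) makes $\mu^{+}$ the unique cover of $\beta$ (any cover $\gamma$ of $\beta$ is $\ge0_{U}$, hence comparable with $\mu^{+}$, whence $\gamma=\mu^{+}$), so $\beta\in\mathsf{Cm}(\mathbf{A})$ is a coatom of the interval and therefore lies in $U$. For (2) the inclusion $U\subseteq\mathsf{Cm}(\mathbf{A})\cap I\left[0_{U},\mu^{+}\right)$ is clear, and conversely any $\varphi\in\mathsf{Cm}(\mathbf{A})$ with $0_{U}\le\varphi<\mu^{+}$ is meet irreducible inside $\left[0_{U},\mu^{+}\right]$, hence a coatom of that complemented modular interval, so $\varphi\prec\mu^{+}$ and (1) gives $\varphi\in U$.
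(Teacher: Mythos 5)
Your reduction of (3) is fine as far as it goes: the degenerate non-Abelian case is handled correctly, and the modularity computation showing that $\delta:=\alpha\wedge\mu^{+}$ satisfies $\delta\vee\varphi=\mu^{+}$ for every $\varphi\in U$ is correct. But the ``geometric fact'' you then invoke is not a lemma you prove --- it essentially \emph{is} the theorem. The claim that $\left[0_{U},\mu^{+}\right]$ is the subspace lattice of a $\mathbb{Z}_{2}$-vector space whose coatoms are exactly $U$ contains parts (1) and (2) verbatim (the coatoms of that interval are precisely the $\beta$ with $0_{U}\leq\beta\prec\mu^{+}$), and you explicitly defer its proof (``should force'', ``I expect this identification \ldots to be the most delicate point''). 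The appeal to \cite{FreMcK87} does not close this gap: there is no off-the-shelf theorem there identifying the congruence interval below an Abelian congruence with the full submodule lattice of a module (the standard results concern Abelian \emph{algebras}; for an Abelian congruence the correspondence is much looser), and even granting some module representation, nothing in your sketch shows that a coatom $\beta$ of the interval is completely meet irreducible in $\mathsf{Con}\left(\mathbf{A}\right)$, nor that it is PIP-related to $\mu$ --- sharing the top $\mu^{+}$ is not enough, as the distributive case shows (Proposition \ref{distributive}, Corollary \ref{iff}, and the Hilbert-algebra example at the end of the paper, where two coatoms under a common cover are not projective). A smaller but real flaw: your inference ``$\mu^{+}$ is the unique cover of $\beta$, so $\beta\in\mathsf{Cm}\left(\mathbf{A}\right)$'' is invalid in general algebraic lattices (in $\mathsf{Con}\left(\mathbb{Z}\right)\times\mathbf{2}$ the bottom has the unique cover $\left(\mathbf{0},1\right)$ yet is the meet of the elements $\left(n\mathbb{Z},\mathbf{0}\right)$); it is repairable from (3), since (3) forces every congruence strictly above $\beta$ to contain $\mu^{+}$, but membership of $\beta$ in $U$ still does not follow from (3) alone.

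The paper fills exactly this hole, and in the opposite order of deduction, with two tools your proposal never uses. Part (1) is proved first and directly: given $0_{U}\leq\beta\prec\mu^{+}$, pick $\varphi\in\mathsf{Cm}\left(\mathbf{A}\right)$ above $\beta$ but not above $\mu^{+}$ (so $\mu^{+}\wedge\varphi=\beta$); $1$-regularity yields $x$ with $\left(1,x\right)\in\mu^{+}\backslash\beta$, Lemma \ref{atom} makes $\gamma=\Theta_{\mathbf{A}}\left(1,x\right)\vee0_{U}$ an atom over $0_{U}$, and the explicit chain $I\left[\nu,\nu^{+}\right]\searrow I\left[0_{U},\gamma\right]\nearrow I\left[\beta,\mu^{+}\right]\nearrow I\left[\varphi,\varphi^{+}\right]$ (for $\nu\in U$ avoiding $\left(1,x\right)$) gives $\varphi\sim\nu$, hence $\varphi^{+}=\mu^{+}$ and $\beta=\mu^{+}\wedge\varphi=\varphi\in U$; this is how complete meet irreducibility of $\beta$ and its membership in $U$ are obtained simultaneously, breaking the circularity you worried about. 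Parts (2) and (3) then follow from (1) by a centralizer computation that is also absent from your proposal: abelianness of $\mu^{+}$ over $0_{U}$ (Proposition \ref{abeovezer}) gives $\left[\mu^{+}\vee\eta,\eta^{+}\right]\leq\eta$ for the relevant $\eta\in\mathsf{Cm}\left(\mathbf{A}\right)$, whence $\mu^{+}\vee\eta\leq\left(\eta:\eta^{+}\right)\leq\eta^{+}$ by (SC1) (Theorem \ref{FreSC1}); this forces $\mu^{+}\wedge\eta\prec\mu^{+}$, so $\eta\in U$ by (1), and modularity finishes (3). Your plan could in principle be completed, but only by supplying precisely these arguments; as written, it assumes its hardest step.
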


\begin{proof}
1. Clearly, if $\beta\in U$, then $0_{U}\leq\beta\prec\mu^{+}$. Let $\beta
\in\mathsf{Con}\left(  \mathbf{A}\right)  $ be such that $0_{U}\leq\beta
\prec\mu^{+}$. If we pick $\varphi\in\mathsf{Cm}\left(  \mathbf{A}\right)  $
over $\beta$ but not over $\mu^{+}$, then $\mu^{+}\wedge\varphi=\beta$, and so
$I\left[  \beta,\mu^{+}\right]  \nearrow I\left[  \varphi,\varphi^{+}\right]
$. From $1$-regularity we deduce that there is $x\in A$ such that $\left(
1,x\right)  \in\mu^{+}\backslash\beta$. Clearly, $\left(  1,x\right)
\notin0_{U}$, and so there exists $\nu\in U$ such that $\left(  1,x\right)
\notin\nu$. Put $\gamma=\Theta_{\mathbf{A}}\left(  1,x\right)  \vee0_{U}$. It
follows from Lemma \ref{atom} that $0_{U}\prec\gamma$. Then $I\left[  \nu
,\nu^{+}\right]  \searrow I\left[  0_{U},\gamma\right]  \nearrow I\left[
\beta,\mu^{+}\right]  \nearrow I\left[  \varphi,\varphi^{+}\right]  $. Hence
$\varphi\sim\nu$ and $\varphi^{+}=\nu^{+}=\mu^{+}$. Thus $\beta=\mu^{+}%
\wedge\varphi=\varphi\in U$.

2. If $\operatorname*{card}U=1$, then the equality is obvious. Let us assume
that $\operatorname*{card}U\geq2$. Take $\gamma\in\mathsf{Cm}\left(
\mathbf{A}\right)  \cap I\left[  0_{U},\mu^{+}\right)  $. Clearly, $\gamma
^{+}\leq\mu^{+}$. From Lemma \ref{centralmon} we deduce that $\mu^{+}$ is
Abelian over $\mu$. Consequently, by Proposition \ref{abeovezer}, $\mu^{+}$ is
Abelian over $0_{U}$. We get $\left[  \gamma^{+},\mu^{+}\right]  \leq\left[
\mu^{+},\mu^{+}\right]  \leq0_{U}\leq\gamma$, and so $\mu^{+}\leq\left(
\gamma:\gamma^{+}\right)  $. On the other hand, it follows from Theorem
\ref{FreSC1} that $\left(  \gamma:\gamma^{+}\right)  \leq\gamma^{+}$. Thus
$\mu^{+}\leq\gamma^{+}$, and so $\mu^{+}=\gamma^{+}$. In consequence,
$0_{U}\leq\gamma\prec\mu^{+}$ and from (1) we get $\gamma\in U$.

3. Let $\alpha\in\mathsf{Con}\left(  \mathbf{A}\right)  $, $0_{U}\leq\alpha$.
As in the proof of (2) we consider two cases: $\operatorname*{card}U=1$ and
$\operatorname*{card}U\geq2$. In the former case, we have $U=\left\{
\mu\right\}  $, and so $\mu=0_{U}\leq\alpha$. Hence, either $\alpha=\mu
<\mu^{+}$, or $\mu<\alpha$, and so $\mu^{+}\leq\alpha$. In the latter case, we
see from Lemma \ref{centralmon} that $\mu^{+}$ is Abelian over $\mu$. Assume
that $\mu^{+}\nleq\alpha$. Then $0_{U}\leq\alpha\wedge\mu^{+}<\mu^{+}$. We
choose $\eta\in\mathsf{Cm}\left(  \mathbf{A}\right)  $ such that $\alpha
\wedge\mu^{+}\leq\eta$ and $\mu^{+}\nleq\eta$. Since $\left[  \mu^{+},\mu
^{+}\right]  \leq0_{U}\leq\eta$, we have $\left[  \mu^{+}\vee\eta,\eta
^{+}\right]  \leq\left[  \mu^{+}\vee\eta,\mu^{+}\vee\eta\right]  \leq\eta$.
Thus $\mu^{+}\vee\eta\leq(\eta:\eta^{+})$. Using (SC1) condition, we get
$(\eta:\eta^{+})\leq\eta^{+}$, and so $\mu^{+}\vee\eta\leq\eta^{+}$.
Consequently, $\mu^{+}\vee\eta=\eta^{+}$, and so $I[\mu^{+}\wedge\eta,\mu
^{+}]\nearrow I[\eta,\eta^{+}]$. Then $0_{U}\leq\mu^{+}\wedge\eta\prec\mu^{+}%
$, and from (1) we deduce that $\mu^{+}\wedge\eta\in U\subset\mathsf{Cm}%
\left(  \mathbf{A}\right)  $. Hence $\eta=\mu^{+}\wedge\eta\in U$ and
$\eta^{+}=\mu^{+}$. Then, from modularity, $\eta=\eta\vee\left(  \alpha
\wedge\mu^{+}\right)  =\eta\vee\left(  \alpha\wedge\eta^{+}\right)  =\left(
\eta\vee\alpha\right)  \wedge\eta^{+}$, and so $\alpha\leq\eta<\eta^{+}%
=\mu^{+}$.
\end{proof}

Applying Theorem \ref{FreSC1}, we can significantly strengthen Theorem
\ref{ddecomposition}. We start from the observation describing the relation
between the Boolean operation and meet operation in a given coset of the
relation PIP.

\begin{lemma}
\label{triple}Let $\mathbf{A}$ be strongly Fregean, $U\in\mathsf{Cm}\left(
\mathbf{A}\right)  /\!\!\sim$ and $\eta,\nu_{1},\nu_{2}\in U$, $\eta\neq
\nu_{1}\neq\nu_{2}\neq\eta$. Then $\nu_{1}\bullet\nu_{2}=\eta$ if and only if
$\nu_{1}\wedge\nu_{2}\leq\eta$.
\end{lemma}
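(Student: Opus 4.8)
The plan is to read off both implications directly from the explicit set-theoretic definition $\nu_1\bullet\nu_2=(\nu_1\div\nu_2)'\cap\mu^+$ (writing $\mu^+$ for the common value $\eta^+=\nu_1^+=\nu_2^+$, equal by Lemma~\ref{centralmon}.2), together with the fact that all three of $\eta,\nu_1,\nu_2$ are coatoms of $\mu^+$. Before starting I would record the structural facts I need: by Theorem~\ref{Boolean} the element $\delta:=\nu_1\bullet\nu_2$ again lies in $U$ (as $\nu_1\neq\nu_2$), and by Theorem~\ref{location}.1 every element of $U$, in particular $\eta,\nu_1,\nu_2,\delta$, satisfies $0_U\leq(\cdot)\prec\mu^+$. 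I would also note the two covering relations $\nu_1\wedge\nu_2\prec\nu_1$ and $\nu_1\wedge\nu_2\prec\nu_2$: since $\nu_1,\nu_2$ are distinct coatoms we have $\nu_1\vee\nu_2=\mu^+$, and transposing $I[\nu_1\wedge\nu_2,\nu_2]\nearrow I[\nu_1,\mu^+]$ (length one) gives $\nu_1\wedge\nu_2\prec\nu_2$, and symmetrically $\nu_1\wedge\nu_2\prec\nu_1$.

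For the forward implication I would argue purely from the definition. Assuming $\nu_1\bullet\nu_2=\eta$, take any $(x,y)\in\nu_1\wedge\nu_2$. Then $(x,y)\in\nu_1\subseteq\mu^+$ and $(x,y)$ lies in both $\nu_1$ and $\nu_2$, so $(x,y)\notin\nu_1\div\nu_2$; hence $(x,y)\in(\nu_1\div\nu_2)'\cap\mu^+=\nu_1\bullet\nu_2=\eta$. This shows $\nu_1\wedge\nu_2\leq\eta$ with no further machinery (one could equivalently invoke the identity $\nu_1\wedge(\nu_1\bullet\nu_2)=\nu_1\wedge\nu_2$ established inside the proof of Theorem~\ref{Boolean}, but the direct computation is shorter).

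The reverse implication is the one carrying the real content. Write $W:=\nu_1\wedge\nu_2$ and assume $W\leq\eta$. First I would pin down the meets of $\eta$ with $\nu_1$ and $\nu_2$: since $W\leq\eta$ and $W\leq\nu_1$ we have $W\leq\nu_1\wedge\eta\leq\nu_1$, and as $W\prec\nu_1$ the interval $[W,\nu_1]$ is two-element, so $\nu_1\wedge\eta\in\{W,\nu_1\}$; the value $\nu_1$ is excluded because $\eta,\nu_1$ are distinct coatoms and hence incomparable, leaving $\nu_1\wedge\eta=W$, and symmetrically $\nu_2\wedge\eta=W$. With this in hand I would show $\eta\subseteq\delta$ by a short pair-chase: for $(x,y)\in\eta\subseteq\mu^+$, if $(x,y)$ belonged to exactly one of $\nu_1,\nu_2$, say to $\nu_1$ but not $\nu_2$, then $(x,y)\in\nu_1\wedge\eta=W=\nu_1\wedge\nu_2\subseteq\nu_2$, a contradiction; the opposite case is symmetric. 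Thus $(x,y)\notin\nu_1\div\nu_2$, so $(x,y)\in(\nu_1\div\nu_2)'\cap\mu^+=\delta$, giving $\eta\leq\delta$.

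To finish I would upgrade this containment to equality using the coatom property: $\eta\leq\delta\prec\mu^+$ while $\eta\prec\mu^+$ forces $\delta\in[\eta,\mu^+]=\{\eta,\mu^+\}$, and $\delta\neq\mu^+$ (as $\delta\in U$), so $\delta=\eta$, i.e. $\nu_1\bullet\nu_2=\eta$. The main obstacle is not any single hard computation but the correct routing: the crux is extracting $\nu_1\wedge\eta=\nu_2\wedge\eta=W$ from the hypothesis $W\leq\eta$ via the covering relations, since this is exactly what makes the symmetric-difference pair-chase go through; once $\eta\leq\delta$ is secured, the passage to equality is immediate from the fact, supplied by Theorem~\ref{location}.1, that the members of $U$ are precisely the coatoms below $\mu^+$.
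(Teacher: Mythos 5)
Your proof is correct, and its skeleton coincides with the paper's: the forward implication is immediate from the definition of $\bullet$ (the paper disposes of it with ``clearly $\nu_{1}\wedge\nu_{2}\leq\nu_{1}\bullet\nu_{2}$''), and for the converse both you and the paper invoke Theorem \ref{Boolean} to place $\delta:=\nu_{1}\bullet\nu_{2}$ in $U$, reduce the problem to showing $\eta\leq\delta$, and finish by incomparability of the elements of $U$. Where you genuinely diverge is the mechanism for proving $\eta\leq\delta$. The paper supposes $(a,b)\in\eta\setminus\delta$, notes that then $(a,b)\in\nu_{1}\div\nu_{2}$, say $(a,b)\in\nu_{1}\setminus\nu_{2}$, and runs a single modular-law computation with the principal congruence of the offending pair: $\nu_{1}=\nu_{1}\wedge\eta^{+}=\nu_{1}\wedge\bigl(\delta\vee\Theta_{\mathbf{A}}(a,b)\bigr)=\bigl(\nu_{1}\wedge\delta\bigr)\vee\Theta_{\mathbf{A}}(a,b)=\bigl(\nu_{1}\wedge\nu_{2}\bigr)\vee\Theta_{\mathbf{A}}(a,b)\leq\eta$, contradicting $\nu_{1}\neq\eta$. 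You instead let modularity enter through the transpositions $I[\nu_{1}\wedge\nu_{2},\nu_{2}]\nearrow I[\nu_{1},\mu^{+}]$ and its mirror image, which give the covers $\nu_{1}\wedge\nu_{2}\prec\nu_{i}$; from these and the hypothesis you pin down $\eta\wedge\nu_{1}=\eta\wedge\nu_{2}=\nu_{1}\wedge\nu_{2}$, after which the element chase excludes the case ``$(x,y)$ lies in exactly one of $\nu_{1},\nu_{2}$'' without ever forming a principal congruence. Both routes are sound and of comparable difficulty: the paper's is more compact, while yours isolates the reusable structural fact that under the hypothesis the three congruences pairwise meet in $\nu_{1}\wedge\nu_{2}$, and keeps the whole argument at the level of the lattice (covers and transposed prime intervals) rather than mixing in congruence generation. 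Your appeals to Theorem \ref{Boolean}, Lemma \ref{centralmon} and Theorem \ref{location} are legitimate, since all three precede Lemma \ref{triple} in the paper and none of their proofs uses it, so there is no circularity.
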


\begin{proof}
Clearly $\nu_{1}\wedge\nu_{2}\leq\nu_{1}\bullet\nu_{2}$. Assume that $\nu
_{1}\wedge\nu_{2}\leq\eta$. By Theorem \ref{Boolean} we know that $\nu
_{1}\bullet\nu_{2}\in U$. It is enough to show that $\eta\leq\nu_{1}\bullet
\nu_{2}$, since all elements in $U$ are incomparable. On the contrary, suppose
that there exists $\left(  a,b\right)  \in\eta\backslash\left(  \nu_{1}%
\bullet\nu_{2}\right)  $. Then either $\left(  a,b\right)  \in\nu
_{1}\backslash\nu_{2}$ or $\left(  a,b\right)  \in\nu_{2}\backslash\nu_{1}$.
There is no loss of generality in assuming that $\left(  a,b\right)  \in
\nu_{1}\backslash\nu_{2}$. Using modularity of $\mathsf{Con}\left(
\mathbf{A}\right)  $ we get $\nu_{1}=\nu_{1}\wedge\eta^{+}=\nu_{1}%
\wedge\left(  \left(  \nu_{1}\bullet\nu_{2}\right)  \vee\Theta_{\mathbf{A}%
}\left(  a,b\right)  \right)  =\left(  \nu_{1}\wedge\left(  \nu_{1}\bullet
\nu_{2}\right)  \right)  \vee\Theta_{\mathbf{A}}\left(  a,b\right)  =\left(
\nu_{1}\wedge\nu_{2}\right)  \vee\Theta_{\mathbf{A}}\left(  a,b\right)
\leq\eta$. Hence $\nu_{1}=\eta$, a contradiction.
\end{proof}

\begin{proposition}
\label{baslem}Let $\mathbf{A}$ be strongly Fregean, $\eta\in\mathsf{Cm}\left(
\mathbf{A}\right)  $, and $\alpha_{1},\ldots,\alpha_{n}%
\in\mathsf{Con}\left(  \mathbf{A}\right)  $ be such that $\alpha_{1}\wedge
\cdots\wedge\alpha_{n}\leq\eta$. Put $U:=\eta/\!\!\sim$. Then there exist $\mu_{1},\ldots,\mu_{n}\in
U\cup\left\{  \mathbf{1}_{\mathbf{A}}\right\}  $ such that $\alpha_{i}\leq
\mu_{i}$ for $i=1,\ldots,n$, $\mu_{1}\wedge\cdots\wedge\mu_{n}\leq\eta$, and
$\eta$ belongs to a subuniverse of $\left(  \overline{U},\bullet\right)  $
generated by $\left\{  \mu_{i}:1,\ldots,n\right\}  \backslash\left\{
\mathbf{1}_{\mathbf{A}}\right\}  $, i.e., $\eta=\mu_{i_{1}}\bullet
\cdots\bullet\mu_{i_{k}}$ for some $1\leq i_{1}<\ldots<i_{k}\leq n$.
\end{proposition}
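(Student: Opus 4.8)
The plan is to argue by induction on $n$, peeling off one congruence at a time with Theorem \ref{ddecomposition} and then using the Boolean group structure of $(\overline{U},\bullet)$ (Theorem \ref{Boolean}) together with Lemma \ref{triple} to bookkeep how $\eta$ is recovered as a $\bullet$-product. For the base case $n=1$ we simply have $\alpha_1\leq\eta$, and $\mu_1:=\eta$ does the job: it is in $U$, satisfies $\mu_1\wedge\cdots\leq\eta$, and $\eta=\mu_1$ lies in the subuniverse generated by $\{\mu_1\}$.

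For the inductive step I would set $\beta:=\alpha_2\wedge\cdots\wedge\alpha_n$, so that $\alpha_1\wedge\beta\leq\eta$. Theorem \ref{ddecomposition} (its $n=2$ instance) then supplies $\mu_1,\nu\in U\cup\{\mathbf{1}_{\mathbf{A}}\}$ with $\alpha_1\leq\mu_1$, $\beta\leq\nu$ and $\mu_1\wedge\nu\leq\eta$. I then split on $\nu$. If $\nu=\mathbf{1}_{\mathbf{A}}$, then $\mu_1\leq\eta$; since a completely meet irreducible congruence is never $\mathbf{1}_{\mathbf{A}}$, we must have $\mu_1\in U$, and the incomparability of distinct members of $U$ (noted after Lemma \ref{centralmon}) forces $\mu_1=\eta$. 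In this case I take $\mu_2=\cdots=\mu_n=\mathbf{1}_{\mathbf{A}}$; all requirements hold and $\eta=\mu_1$. If instead $\nu\in U$, then $\nu/\!\!\sim\,=U$ and $\alpha_2\wedge\cdots\wedge\alpha_n\leq\nu$, so the inductive hypothesis applied to $\nu$ and the $n-1$ congruences $\alpha_2,\ldots,\alpha_n$ yields $\mu_2,\ldots,\mu_n\in U\cup\{\mathbf{1}_{\mathbf{A}}\}$ with $\alpha_i\leq\mu_i$, $\mu_2\wedge\cdots\wedge\mu_n\leq\nu$, and $\nu=\mu_{j_1}\bullet\cdots\bullet\mu_{j_l}$ for certain non-top $\mu_j$ with $2\leq j_1<\cdots<j_l\leq n$. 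The meet condition is immediate, since $\mu_1\wedge\cdots\wedge\mu_n\leq\mu_1\wedge\nu\leq\eta$.

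It remains to write $\eta$ as a $\bullet$-product of non-top $\mu_i$, and this is where the case bookkeeping lives. If $\mu_1=\mathbf{1}_{\mathbf{A}}$, then $\nu\leq\eta$ forces $\nu=\eta$, so $\eta=\mu_{j_1}\bullet\cdots\bullet\mu_{j_l}$ and we are done. If $\mu_1\in U$, I distinguish whether $\mu_1,\nu,\eta$ are pairwise distinct: when they are not, incomparability immediately gives $\eta=\mu_1$ or $\eta=\nu$; when they are, Lemma \ref{triple} converts $\mu_1\wedge\nu\leq\eta$ into $\mu_1\bullet\nu=\eta$, whence $\eta=\mu_1\bullet\mu_{j_1}\bullet\cdots\bullet\mu_{j_l}$, a product over the distinct, increasing indices $1<j_1<\cdots<j_l$. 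In every subcase $\eta$ belongs to the subuniverse of $(\overline{U},\bullet)$ generated by the non-top $\mu_i$, completing the induction.

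The main obstacle I anticipate is exactly this final combination: ensuring that the $\bullet$-expression for $\nu$ coming from the inductive hypothesis merges correctly with the relation between $\mu_1$, $\nu$ and $\eta$, and in particular verifying that Lemma \ref{triple} is applicable. That lemma requires the three congruences to be pairwise distinct, so the degenerate collisions ($\mu_1=\nu$, $\eta=\mu_1$, or $\eta=\nu$) must be absorbed separately, which is precisely what the incomparability of distinct elements of $U$ lets me do cleanly.
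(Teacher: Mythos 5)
Your proof is correct and takes essentially the same route as the paper's: induction on $n$, splitting off a single congruence via Theorem \ref{ddecomposition}, applying the induction hypothesis to the element of $U$ lying above the remaining meet, and then combining via Lemma \ref{triple} together with the incomparability of distinct elements of $U$. The only cosmetic difference is that the paper first reduces to the case $\alpha_i \nleq \eta$ for all $i$, which turns the degenerate collisions into outright contradictions, whereas you absorb those collisions into your case analysis; both treatments are sound.
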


\begin{proof}
The proof is by induction on $n$. For $n=1$ the assertion is obvious. Suppose
that $n\geq2$ and the assertion is true for every $k<n$. We can assume that
$\alpha_{i}\nleq\eta$ $\left(  i=1,\ldots,n\right)  $, since otherwise we put
$\mu_{i}=\eta$ and $\mu_{j}=\mathbf{1}_{\mathbf{A}}$ for $j\neq i$. It follows
from Lemma \ref{ddecomposition} that there exist $\upsilon_{1},\upsilon_{2}\in
U\cup\left\{  \mathbf{1}_{\mathbf{A}}\right\}  $ such that $\alpha_{1}%
\wedge\cdots\wedge\alpha_{n-1}\leq\upsilon_{1}$ and $\alpha_{n}\leq
\upsilon_{2}$ and $\upsilon_{1}\wedge\upsilon_{2}\leq\eta$. If $\upsilon
_{1}=\mathbf{1}_{\mathbf{A}}$ or $\upsilon_{1}=\upsilon_{2}$, or $\upsilon
_{2}=\eta$, then $\alpha_{n}\leq\eta$, a contradiction. From $\upsilon_{1}%
\neq\mathbf{1}_{\mathbf{A}}$, by the induction hypothesis, it follows that
there exist $\mu_{1},\ldots,\mu_{n-1}\in U\cup\left\{  \mathbf{1}_{\mathbf{A}%
}\right\}  $ such that $\alpha_{i}\leq\mu_{i}$ for $i=1,\ldots,n-1$, $\mu
_{1}\wedge\cdots\wedge\mu_{n-1}\leq\upsilon_{1}$, and $\upsilon_{1}$ belongs
to a subuniverse of $\left(  \overline{U},\bullet\right)  $ generated by
$\left\{  \mu_{i}:1,\ldots,n-1\right\}  \backslash\left\{  \mathbf{1}%
_{\mathbf{A}}\right\}  $. If $\upsilon_{1}=\eta$, which covers also the case
$\upsilon_{2}=\mathbf{1}_{\mathbf{A}}$, we can put $\mu_{n}=\mathbf{1}%
_{\mathbf{A}}$, and the assertion holds. If $\eta\notin\left\{  \upsilon
_{1},\upsilon_{2}\right\}  $, we get from Lemma \ref{triple} that
$\eta=\upsilon_{1}\bullet\upsilon_{2}$. Putting $\mu_{n}=\upsilon_{2}%
\neq\mathbf{1}_{\mathbf{A}}$, we obtain the assertion.
\end{proof}

By the \textsl{length} $\delta(L)$ of a finite modular lattice $L$ we mean the
length of an arbitrary maximal chain in $L$ (where the length of a finite
chain with $n+1$ element is defined to be $n$). This quantity can be
characterized with the help of the PIP relation.

\begin{theorem}
\label{dimension}Let $\mathbf{A}$ be finite and strongly Fregean. Then
\[
\delta\left(  \mathsf{Con}\left(  \mathbf{A}\right)  \right)  =\sum
\{\dim\overline{U}:U\in\mathsf{Cm}\left(  \mathbf{A}\right)  /\!\!\sim
\}\text{,}%
\]
where $\dim\overline{U}$ denotes the dimension of $\left(  \overline
{U},\bullet\right)  $ treated as a vector space over the field $\mathbb{Z}%
_{2}$.
\end{theorem}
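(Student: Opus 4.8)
The plan is to combine the classical Jordan--Hölder--Dedekind theory of maximal chains in a finite modular lattice with the description of the PIP cosets obtained above. First I would recall that in a finite modular lattice any two maximal chains have the same length and, more precisely, that their prime intervals are matched up to projectivity and to permutation; hence for each projectivity class $\mathcal{C}$ of prime intervals of $\mathsf{Con}(\mathbf{A})$ the number $m(\mathcal{C})$ of members of $\mathcal{C}$ occurring in a maximal chain is an invariant, and $\delta(\mathsf{Con}(\mathbf{A}))=\sum_{\mathcal{C}}m(\mathcal{C})$ (cf.\ \cite{Aic18}). Next I would observe that every prime interval $I[\beta,\beta']$ transposes up to $I[\nu,\nu^{+}]$ for some $\nu\in\mathsf{Cm}(\mathbf{A})$ (choose $\nu\geq\beta$ with $\nu\ngeqslant\beta'$), that the class $\nu/\!\!\sim$ does not depend on this choice, and that $I[\nu,\nu^{+}]$ and $I[\nu',\nu'^{+}]$ are projective exactly when $\nu\sim\nu'$; this sets up a bijection $U\mapsto\mathcal{C}_{U}$ between the cosets $U\in\mathsf{Cm}(\mathbf{A})/\!\!\sim$ and the projectivity classes of prime intervals. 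Thus it suffices to prove $m(\mathcal{C}_{U})=\dim\overline{U}$ for each $U$.

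To compute $m(\mathcal{C}_{U})$ for $U=\mu/\!\!\sim$ I would evaluate it along a maximal chain running from the least congruence up to $0_{U}$, then through a maximal chain of $[0_{U},\mu^{+}]$, and finally up to $\mathbf{1}_{\mathbf{A}}$. A prime interval $I[\beta,\beta']$ lying below $0_{U}$ transposes up to some $I[\nu,\nu^{+}]$ with $\nu\ngeqslant\beta'\leq 0_{U}$, so $\nu\notin U$ (otherwise $\nu\geq 0_{U}\geq\beta'$); dually, a prime interval above $\mu^{+}$ yields $\nu\geq\mu^{+}$, hence again $\nu\notin U$. Consequently no prime interval of the outer parts belongs to $\mathcal{C}_{U}$. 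On the other hand, for a prime interval $I[\beta,\beta']$ inside $[0_{U},\mu^{+}]$ the meet irreducible $\nu$ with $I[\beta,\beta']\nearrow I[\nu,\nu^{+}]$ satisfies $0_{U}\leq\nu$ and $\nu\ngeqslant\mu^{+}$, so $\nu\in U$ by Theorem \ref{location}(2); hence every prime interval of $[0_{U},\mu^{+}]$ lies in $\mathcal{C}_{U}$. Counting along this chain gives $m(\mathcal{C}_{U})=\delta([0_{U},\mu^{+}])$, and by the invariance above this value is independent of the chain.

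It remains to identify $\delta([0_{U},\mu^{+}])$ with $\dim\overline{U}$, and this is the step I expect to be the main obstacle. My plan is to exhibit an order-reversing bijection $\sigma\colon[0_{U},\mu^{+}]\to\mathrm{Sub}(\overline{U})$ onto the lattice of $\mathbb{Z}_{2}$-subspaces of the Boolean group $(\overline{U},\bullet)$ of Theorem \ref{Boolean}, defined by $\sigma(\beta)=\{\nu\in\overline{U}:\beta\leq\nu\}$. That each $\sigma(\beta)$ is a subspace follows from $\nu_{1}\wedge\nu_{2}\leq\nu_{1}\bullet\nu_{2}$ (Lemma \ref{triple}); surjectivity follows from Proposition \ref{baslem}, which shows that the elements of $U$ lying above a meet $\bigwedge S$ of a subset $S\subseteq U$ are exactly the $\bullet$-combinations of members of $S$, that is, the subspace generated by $S$. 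The delicate point is injectivity, equivalently that every element of $[0_{U},\mu^{+}]$ is the meet of the elements of $U$ above it. For this I would first show that $[0_{U},\mu^{+}]$ is atomistic: since $\mathbf{A}$ is $1$-regular, every $\beta$ in the interval equals $\bigvee\{\Theta_{\mathbf{A}}(1,a)\vee 0_{U}:(1,a)\in\beta\}$, and each nontrivial joinand is an atom of the interval by Lemma \ref{atom}. A finite atomistic modular lattice is complemented, hence also coatomistic, so every $\beta$ is indeed a meet of coatoms, which by Theorem \ref{location}(1) are precisely the elements of $U$. Therefore $\sigma$ is an anti-isomorphism, whence $\delta([0_{U},\mu^{+}])=\delta(\mathrm{Sub}(\overline{U}))=\dim\overline{U}$. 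Summing over all $U\in\mathsf{Cm}(\mathbf{A})/\!\!\sim$ yields the claimed formula.
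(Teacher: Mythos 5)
Your proof is correct, but it takes a genuinely different route from the paper's. The paper argues by induction on $\delta\left(\mathsf{Con}\left(\mathbf{A}\right)\right)$: it splits off an atom $\Theta_{\mathbf{A}}\left(1,a\right)$, passes to the quotient $\mathbf{A}/\Theta_{\mathbf{A}}\left(1,a\right)$ (whose congruence lattice is the interval above $\Theta_{\mathbf{A}}\left(1,a\right)$), uses Corollary \ref{iff} to identify the PIP classes of the quotient with the classes of $\mathbf{A}$ intersected with $M(a)$, and invokes Proposition \ref{hyperplane} (a forward reference to the next section) to record the dimension drop $\dim\overline{V\cap M(a)}+1=\dim\overline{V}$ in the single affected class $V$. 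You avoid both the induction and the quotient algebra: you rely on the Jordan--H\"{o}lder--Dedekind matching of prime intervals of maximal chains up to projectivity, so that the multiplicity $m(\mathcal{C}_{U})$ is chain-independent, and then compute it along a chain through $0_{U}$ and $\mu^{+}$, reducing the whole theorem to the anti-isomorphism between $\left[0_{U},\mu^{+}\right]$ and $\mathrm{Sub}\left(\overline{U}\right)$. Your route buys a stronger structural byproduct---each interval $\left[0_{U},\mu^{+}\right]$ is the subspace lattice of a $\mathbb{Z}_{2}$-vector space, i.e., a projective geometry over $\mathbb{Z}_{2}$---and it removes the paper's forward dependence on Proposition \ref{hyperplane}; the price is an appeal to classical lattice-theoretic facts the paper never needs (the JHD theorem with projectivity of matched quotients, and the chain ``finite atomistic modular $\Rightarrow$ complemented $\Rightarrow$ dually atomistic''). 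Two minor patches are worth recording: to place the transposed-up $\nu$ of a prime interval of $\left[0_{U},\mu^{+}\right]$ inside $U$ you need Theorem \ref{location}(3) in addition to \ref{location}(2), since part (3) is what converts $\mu^{+}\nleq\nu$ into $\nu<\mu^{+}$; and in the surjectivity step you should note that distinct members of $U$ are incomparable, so the congruences $\mu_{i}$ produced by Proposition \ref{baslem} with $\alpha_{i}\leq\mu_{i}$ are either $\alpha_{i}$ itself or $\mathbf{1}_{\mathbf{A}}$, which is exactly why the resulting $\bullet$-combination lies in the subgroup generated by $S$.
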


\begin{proof}
The proof is by induction on $\delta\left(  \mathsf{Con}\left(  \mathbf{A}%
\right)  \right)  $. If $\delta\left(  \mathsf{Con}\left(  \mathbf{A}\right)
\right)  =0$, then the theorem is trivial. Let now $\delta\left(
\mathsf{Con}\left(  \mathbf{A}\right)  \right)  >0$. From $1$-regularity, we
can find $a\in A$ such that $\Theta_{\mathbf{A}}\left(  1,a\right)  $ is an
atom of $\mathsf{Con}\left(  \mathbf{A}\right)  $. Take $\mu\in\mathsf{Cm}%
\left(  \mathbf{A}\right)  $ such that $\Theta_{\mathbf{A}}\left(  1,a\right)
\nleqslant\mu$. As $\mu\wedge\Theta_{\mathbf{A}}\left(  1,a\right)
=\mathbf{0}_{\mathbf{A}}$, we have $\mu^{+}=\mu\vee\Theta_{\mathbf{A}}\left(
1,a\right)  $. Put $V:=\mu/\!\!\sim$. It is easy to observe that,
$\Theta_{\mathbf{A}}\left(  1,a\right)  \leq\varphi$ for every $\varphi
\in\mathsf{Cm}\left(  \mathbf{A}\right)  \backslash V$, since otherwise
$I\left[  \varphi,\varphi^{+}\right]  \searrow I\left[  \mathbf{0}%
_{\mathbf{A}},\Theta_{\mathbf{A}}\left(  1,a\right)  \right]  \nearrow
I\left[  \mu,\mu^{+}\right]  $, a contradiction. Then $M(a):=\left\{  \eta
\in\mathsf{Cm}\left(  \mathbf{A}\right)  :\left(  1,a\right)  \in\eta\right\}
=(M\left(  a\right)  \cap V)\cup(\mathsf{Cm}\left(  \mathbf{A}\right)
\backslash V)$. Clearly, $\mathsf{Con}(\mathbf{A/}\Theta_{\mathbf{A}}\left(
1,a\right)  )$ is isomorphic to the interval $I\left[  \Theta_{\mathbf{A}%
}\left(  1,a\right)  ,\mathbf{1}_{\mathbf{A}}\right]  $ in $\mathsf{Con}%
\left(  \mathbf{A}\right)  $, and so $\mathsf{Cm}(\mathbf{A/}\Theta
_{\mathbf{A}}\left(  1,a\right)  )=\{\eta/\Theta_{\mathbf{A}}\left(
1,a\right)  :\eta\in M(a)\}$. We use Corollary \ref{iff} to deduce that
$\varphi\sim\psi$ in $\mathsf{Con}(\mathbf{A})$ if and only if $\varphi
\mathbf{/}\Theta_{\mathbf{A}}\left(  1,a\right)  \sim\psi/\Theta_{\mathbf{A}%
}\left(  1,a\right)  $ in $\mathsf{Con}(\mathbf{A/}\Theta_{\mathbf{A}}\left(
1,a\right)  )$ for $\varphi,\psi\in M(a)$. Thus, taking the quotient set
$\mathsf{Cm}\left(  \mathbf{A/}\Theta_{\mathbf{A}}\left(  1,a\right)  \right)
/\!\!\sim$ we get, in fact, the same equivalence classes (up to the natural
isomorphism) as in $M(a)/\!\!\sim$. What is more, for any $W\in\mathsf{Cm}%
\left(  \mathbf{A}\right)  /\!\!\sim$, $W\neq V$, we have $W\cap M(a)=W$.

Moreover, in the next section we show (Proposition \ref{hyperplane}) that
$\overline{V\cap M\left(  a\right)  }$ is a hyperplane in $\left(
\overline{V},\bullet\right)  $, and so $\dim\overline{V\cap M(a)}%
+1=\dim\overline{V}$. Combining all these facts, and using induction
assumptions for $\mathbf{A/}\Theta_{\mathbf{A}}\left(  1,a\right)  $, we get:
\begin{align*}
\delta\left(  \mathsf{Con}\left(  \mathbf{A}\right)  \right)   &
=\delta\left(  \mathsf{Con}(\mathbf{A/}\Theta_{\mathbf{A}}\left(  1,a\right)
)\right)  +1\\
&  =\sum\{\dim\overline{U}:U\in\mathsf{Cm}\left(  \mathbf{A/}\Theta
_{\mathbf{A}}\left(  1,a\right)  \right)  /\!\!\sim\}+1\\
&  =\dim\overline{V\cap M(a)}+1+\sum\{\dim\overline{W}:W\in\mathsf{Cm}\left(
\mathbf{A}\right)  /\!\!\sim,W\neq V\}\\
&  =\sum\{\dim\overline{W}:W\in\mathsf{Cm}\left(  \mathbf{A}\right)
/\!\!\sim\}\text{.}%
\end{align*}

\end{proof}

\section{Representations}

Let us start by recalling the well-known construction. Let $\mathcal{P}%
=(P,\leq)$ be a poset. For $S\subset P$ we write $S\!\uparrow\,:=\left\{
\mu\in P:\mu\geq\varphi\text{ for some }\varphi\in S\right\}  $ and
$S\!\downarrow\,:=\left\{  \mu\in P:\mu\leq\varphi\text{ for some }\varphi\in
S\right\}  $. The set $Up\left(  \mathcal{P}\right)  =\left\{  S\subset
P:S=S\uparrow\right\}  $ has the natural structure of Heyting algebra with the
operations: $\cup$, $\cap$, $0:=\emptyset$, $1:=P$, and $\rightarrow$ defined
by $S\rightarrow T:=\left(  \left(  S\backslash T\right)  \downarrow\right)
^{\prime}$ for $S,T\in Up\left(  \mathcal{P}\right)  $. Then the equivalence
operation $\leftrightarrow$ in $Up\left(  \mathcal{P}\right)  $ is given by
\[
S\leftrightarrow T:=(S\rightarrow T)\cap(T\rightarrow S)=\left(  \left(  S\div
T\right)  \downarrow\right)  ^{\prime}%
\]
for $S,T\in Up\left(  \mathcal{P}\right)  $. Alternatively we can also define
$S\leftrightarrow T$ as the largest $C\in Up\left(  \mathcal{P}\right)  $
fulfilling $C\cap S=C\cap T$.

In this section we apply the above construction to the poset $\mathsf{Cm}%
\left(  \mathbf{A}\right)  $. Now, let $\mathbf{A}$ be an arbitrary algebra.
Then, it is well known that the map
\[
M:\mathsf{Con}\left(  \mathbf{A}\right)  \ni\varphi\rightarrow M\left(
\varphi\right)  :=\left\{  \mu\in\mathsf{Cm}\left(  \mathbf{A}\right)
:\varphi\leq\mu\right\}  \in Up\left(  \mathsf{Cm}\left(  \mathbf{A}\right)
\right)
\]
is one-to-one, as $\varphi=\bigwedge M\left(  \varphi\right)  $ holds for
every $\varphi\in\mathsf{Con}\left(  \mathbf{A}\right)  $ (Birkhoff's theorem).

In the next proposition that shows how the principal congruences behave under
the map $M$, we merely assume that $\mathrm{H}(\mathbf{A})$ is congruence orderable.

\begin{proposition}
\label{eqv}Let $\mathrm{H}(\mathbf{A})$ be congruence orderable (with respect
to a constant term $1$). Then
\[
M(\Theta_{\mathbf{A}}(a,b))=M(a)\leftrightarrow M(b)\text{,}%
\]
for $a,b\in A$, where $M(c):=M(\Theta_{\mathbf{A}}(1,c))=\{\mu\in
\mathsf{Cm}\left(  \mathbf{A}\right)  :(1,c)\in\mu\}$ for $c\in A$.
\end{proposition}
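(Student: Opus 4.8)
The plan is to prove the equality pointwise: for each $\mu\in\mathsf{Cm}(\mathbf{A})$ I would establish the chain of equivalences $\mu\in M(\Theta_{\mathbf{A}}(a,b))\iff(a,b)\in\mu\iff\cdots\iff\mu\in M(a)\leftrightarrow M(b)$, after which set equality is immediate since both sides are subsets of $\mathsf{Cm}(\mathbf{A})$. The two organizing facts are the injectivity of $M$ together with the identity $M(\varphi\vee\psi)=M(\varphi)\cap M(\psi)$ for $\varphi,\psi\in\mathsf{Con}(\mathbf{A})$ (immediate from the equivalence of $\varphi\vee\psi\leq\nu$ with $\varphi\leq\nu$ and $\psi\leq\nu$), and the characterization of congruence orderability of $\mathrm{H}(\mathbf{A})$ recalled before Proposition \ref{altern}: for every $\alpha\in\mathsf{Con}(\mathbf{A})$ and $a,b\in A$ we have $\alpha\vee\Theta_{\mathbf{A}}(1,a)=\alpha\vee\Theta_{\mathbf{A}}(1,b)$ iff $(a,b)\in\alpha$.

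First I would unwind the right-hand side. Using the description of $\leftrightarrow$ as the largest $C\in Up(\mathsf{Cm}(\mathbf{A}))$ with $C\cap M(a)=C\cap M(b)$ --- equivalently, $\mu\in M(a)\leftrightarrow M(b)$ iff $\nu\in M(a)\iff\nu\in M(b)$ for every $\nu\in\mathsf{Cm}(\mathbf{A})$ with $\nu\geq\mu$ --- and noting that $\{\nu\in\mathsf{Cm}(\mathbf{A}):\nu\geq\mu\}=M(\mu)$, the membership $\mu\in M(a)\leftrightarrow M(b)$ becomes exactly the equality $M(\mu)\cap M(a)=M(\mu)\cap M(b)$.

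Then I would connect this to the left-hand side. Applying $M(\varphi\vee\psi)=M(\varphi)\cap M(\psi)$ with $\varphi=\mu$ rewrites $M(\mu)\cap M(a)=M(\mu\vee\Theta_{\mathbf{A}}(1,a))$ and likewise for $b$; by injectivity of $M$, the equality of these up-sets is equivalent to $\mu\vee\Theta_{\mathbf{A}}(1,a)=\mu\vee\Theta_{\mathbf{A}}(1,b)$. The orderability criterion, applied with $\alpha=\mu$, converts the latter into $(a,b)\in\mu$, that is $\Theta_{\mathbf{A}}(a,b)\leq\mu$, i.e. $\mu\in M(\Theta_{\mathbf{A}}(a,b))$. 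Chaining these equivalences yields the claim.

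The step requiring the most care is the single use of the hypothesis on $\mathrm{H}(\mathbf{A})$, namely the invocation of the orderability criterion with $\alpha=\mu$; everything else is formal manipulation of up-sets. In particular I would double-check the one direction that genuinely needs structure --- that $M(\mu)\cap M(a)=M(\mu)\cap M(b)$ forces $(a,b)\in\mu$, rather than merely forcing $(1,a)$ and $(1,b)$ to lie in the same meet-irreducibles above $\mu$ --- since this is precisely where congruence orderability (and not mere $1$-regularity) enters. By contrast, the reverse inclusion $M(\Theta_{\mathbf{A}}(a,b))\subseteq M(a)\leftrightarrow M(b)$ uses only that each $\mu$ is a congruence: if $(a,b)\in\mu$ and $(1,a)\in\mu$ then $(1,b)\in\mu$ by transitivity, so $M(\Theta_{\mathbf{A}}(a,b))\cap M(a)=M(\Theta_{\mathbf{A}}(a,b))\cap M(b)$, and maximality of $\leftrightarrow$ gives the inclusion.
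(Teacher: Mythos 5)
Your proof is correct, and it rests on exactly the same two pillars as the paper's --- Birkhoff's theorem (which you invoke as injectivity of $M$) and the characterization of congruence orderability of $\mathrm{H}(\mathbf{A})$ recalled before Proposition \ref{altern} --- but the decomposition is different. The paper proves directly that $M(\Theta_{\mathbf{A}}(a,b))$ is the largest $C\in Up\left(\mathsf{Cm}\left(\mathbf{A}\right)\right)$ with $C\cap M(a)=C\cap M(b)$: the easy half is transitivity, and maximality is proved by contradiction, where from $(a,b)\notin\mu$ for some $\mu\in C$ one direction of the orderability criterion gives $\mu\vee\Theta_{\mathbf{A}}(1,a)\neq\mu\vee\Theta_{\mathbf{A}}(1,b)$, and Birkhoff's theorem then produces an explicit witness $\upsilon\in C\cap M(a)\setminus M(b)$ contradicting the hypothesis on $C$. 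You instead localize: membership $\mu\in M(a)\leftrightarrow M(b)$ is unwound (correctly, via $S\leftrightarrow T=\left(\left(S\div T\right)\downarrow\right)^{\prime}$, since the up-set of $\mu$ inside $\mathsf{Cm}\left(\mathbf{A}\right)$ is precisely $M(\mu)$) to the equality $M(\mu)\cap M(a)=M(\mu)\cap M(b)$; the identity $M(\varphi\vee\psi)=M(\varphi)\cap M(\psi)$ turns this into $M(\mu\vee\Theta_{\mathbf{A}}(1,a))=M(\mu\vee\Theta_{\mathbf{A}}(1,b))$; injectivity of $M$ gives equality of the joins; and both directions of the orderability criterion finish. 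Your version buys a proof that is a single chain of biconditionals, with no contradiction and no loss-of-generality step, and with a transparent accounting of where the hypothesis on $\mathrm{H}(\mathbf{A})$ enters (exactly one link of the chain, as you note); the paper's version uses Birkhoff's theorem in its bare separation form rather than through the derived identities for $M$. Unwinding your injectivity step would reproduce the paper's witness $\upsilon$, so the two arguments coincide at bottom, but your packaging is genuinely tidier.
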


\begin{proof}
Let $a,b\in A$. It is enough to show that $M\left(  \Theta_{\mathbf{A}%
}(a,b)\right)  $ is the largest $C\in Up\left(  \mathsf{Cm}\left(
\mathbf{A}\right)  \right)  $ such that $C\cap M\left(  a\right)  =C\cap
M\left(  b\right)  $. Clearly, $M\left(  \Theta_{\mathbf{A}}(a,b)\right)  \cap
M\left(  a\right)  =M\left(  \Theta_{\mathbf{A}}(a,b)\right)  \cap M\left(
b\right)  $. Let $C\in Up\left(  \mathsf{Cm}\left(  \mathbf{A}\right)
\right)  $ fulfill $C\cap M\left(  a\right)  =C\cap M\left(  b\right)  $.
Assume that there exists $\mu\in C\backslash M\left(  \Theta_{\mathbf{A}%
}(a,b)\right)  $. Then $(a,b)\notin\mu$. From the fact that $\mathrm{H}%
(\mathbf{A})$ is congruence orderable we get $\mu\vee\Theta_{\mathbf{A}%
}\left(  1,a\right)  \neq\mu\vee\Theta_{\mathbf{A}}\left(  1,b\right)  $.
Without loss of generality we can assume that $\mu\vee\Theta_{\mathbf{A}%
}\left(  1,a\right)  \nleq\mu\vee\Theta_{\mathbf{A}}\left(  1,b\right)  $.
Then there exists $\upsilon\in\mathsf{Cm}\left(  \mathbf{A}\right)  $ such
that $\mu\vee\Theta_{\mathbf{A}}\left(  1,a\right)  \leq\upsilon$ and $\left(
1,b\right)  \notin\upsilon$. Hence $\upsilon\in C\cap M\left(  a\right)  $ and
$\upsilon\notin M\left(  b\right)  $, a contradiction.
\end{proof}

From now on, as in the preceding section, we assume that $\mathbf{A}$ is
strongly Fregean. Let us consider the following subfamily of $Up\left(
\mathsf{Cm}\left(  \mathbf{A}\right)  \right)  $, which for finite algebras
can be identified with $\mathsf{Con}\left(  \mathbf{A}\right)  $.

\begin{definition}
$\mathcal{S}\left(  \mathbf{A}\right)  :=\{S\in Up\left(  \mathsf{Cm}\left(
\mathbf{A}\right)  \right)  :\overline{S\cap U}$ is a subalgebra of $\left(
\overline{U},\bullet\right)  $ for every $U\in\mathsf{Cm}\left(
\mathbf{A}\right)  /\!\!\sim\}$.
\end{definition}

From Proposition \ref{baslem} it is easy to deduce the following lemma.

\begin{lemma}
\label{herlem}Let $\mathbf{A}$ be strongly Fregean, $S$ be a finite set from
$\mathcal{S}\left(  \mathbf{A}\right)  $ and $\mu\in\mathsf{Cm}\left(
\mathbf{A}\right)  $ be such that $\bigwedge S\leq\mu$. Then $\mu\in S$.
\end{lemma}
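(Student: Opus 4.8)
The plan is to reduce the statement directly to Proposition \ref{baslem}, exploiting crucially that $S$ is upward closed in the poset $\mathsf{Cm}(\mathbf{A})$. Using finiteness, write $S=\{\alpha_{1},\ldots,\alpha_{n}\}$, so that $\bigwedge S=\alpha_{1}\wedge\cdots\wedge\alpha_{n}\leq\mu$. Setting $\eta:=\mu$ and $U:=\mu/\!\!\sim$, I would apply Proposition \ref{baslem} to obtain $\mu_{1},\ldots,\mu_{n}\in U\cup\{\mathbf{1}_{\mathbf{A}}\}$ with $\alpha_{i}\leq\mu_{i}$ for each $i$, together with a decomposition $\mu=\mu_{i_{1}}\bullet\cdots\bullet\mu_{i_{k}}$ for some $1\leq i_{1}<\cdots<i_{k}\leq n$, where each factor $\mu_{i_{j}}\in U$ (the factors equal to $\mathbf{1}_{\mathbf{A}}$ having been dropped).

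The key step is to show that every factor $\mu_{i_{j}}$ already lies in $S\cap U$. Indeed, $\mu_{i_{j}}\in U\subseteq\mathsf{Cm}(\mathbf{A})$, and Proposition \ref{baslem} gives $\alpha_{i_{j}}\leq\mu_{i_{j}}$ with $\alpha_{i_{j}}\in S$. Since $S$ is an upward closed subset of $\mathsf{Cm}(\mathbf{A})$, membership of $\alpha_{i_{j}}$ combined with $\alpha_{i_{j}}\leq\mu_{i_{j}}$ forces $\mu_{i_{j}}\in S$, hence $\mu_{i_{j}}\in S\cap U$. This is the only place where the hypothesis $S\in Up(\mathsf{Cm}(\mathbf{A}))$ is used, and I expect it to be the crux: Proposition \ref{baslem} only guarantees the inequality $\alpha_{i}\leq\mu_{i}$, and upward closure is precisely what upgrades this inequality into membership of the dominating completely meet irreducible congruence.

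It then remains to invoke the defining property of $\mathcal{S}(\mathbf{A})$. Since $S\in\mathcal{S}(\mathbf{A})$, the set $\overline{S\cap U}=(S\cap U)\cup\{\mu^{+}\}$ is a subalgebra of the Boolean group $(\overline{U},\bullet)$, in particular closed under $\bullet$. As all the factors $\mu_{i_{1}},\ldots,\mu_{i_{k}}$ belong to $S\cap U\subseteq\overline{S\cap U}$, their product $\mu=\mu_{i_{1}}\bullet\cdots\bullet\mu_{i_{k}}$ lies in $\overline{S\cap U}$. Finally, $\mu\prec\mu^{+}$ yields $\mu\neq\mu^{+}$, so $\mu\in S\cap U\subseteq S$, as desired. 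No serious obstacle is anticipated beyond the upward-closure step; the remainder is routine bookkeeping with the operation $\bullet$ and the decomposition furnished by Proposition \ref{baslem}.
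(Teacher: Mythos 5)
Your proposal is correct and follows essentially the same route as the paper's own proof: both enumerate $S=\{\alpha_{1},\ldots,\alpha_{n}\}$, apply Proposition \ref{baslem} with $\eta=\mu$, use upward closure of $S$ to place the non-trivial $\mu_{i}$ in $S\cap U$, and then invoke closure of $\overline{S\cap U}$ under $\bullet$ to conclude $\mu\in S$. Your explicit remark that $\mu\neq\mu^{+}$ (so that $\mu\in\overline{S\cap U}$ actually forces $\mu\in S\cap U$) is a detail the paper leaves implicit, but the argument is the same.
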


\begin{proof}
Let $S=\left\{  \alpha_{1},\ldots,\alpha_{n}\right\}  \subset\mathsf{Cm}%
\left(  \mathbf{A}\right)  $ and $\alpha_{1}\wedge\cdots\wedge\alpha_{n}%
\leq\mu$. According to Proposition \ref{baslem} there exist $\mu_{1}%
,\ldots,\mu_{n}\in\mu/\!\!\sim\cup\left\{  \mathbf{1}_{\mathbf{A}}\right\}  $
such that $\alpha_{i}\leq\mu_{i}$ for $i=1,\ldots,n$, $\mu_{1}\wedge
\cdots\wedge\mu_{n}\leq\mu$, and $\mu=\mu_{i_{1}}\bullet\cdots\bullet
\mu_{i_{k}}$ for some $1\leq i_{1}<\ldots<i_{k}\leq n$. Put $U=\mu/\!\!\sim$.
From $S=S\uparrow$ we get $\left\{  \mu_{1},\ldots,\mu_{n}\right\}
\backslash\left\{  \mathbf{1}_{\mathbf{A}}\right\}  \subset S\cap U$. Since
$\overline{S\cap U}$ is a subgroup of $\left(  \overline{U},\bullet\right)  $
we deduce that $\mu\in\overline{S\cap U}$.\ Hence $\mu\in S$.
\end{proof}

As a consequence, we get

\begin{theorem}
\label{S(A)}Let $\mathbf{A}$ be strongly Fregean. Then

\begin{enumerate}
\item $M(\mathsf{Con}\left(  \mathbf{A}\right)  )\subset\mathcal{S}\left(
\mathbf{A}\right)  $.

\item If $\mathbf{A}$ is finite, then $M$ establishes a one-to-one
correspondence between $\mathsf{Con}\left(  \mathbf{A}\right)  $ and
$\mathcal{S}\left(  \mathbf{A}\right)  $.
\end{enumerate}
\end{theorem}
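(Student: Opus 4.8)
The plan is to prove both statements by leveraging the map $M$ and the characterization provided by Lemma~\ref{herlem}. For part (1), I would take an arbitrary $\varphi\in\mathsf{Con}(\mathbf{A})$ and fix a PIP class $U=\eta/\!\!\sim$. I need to show that $\overline{M(\varphi)\cap U}$ is a subgroup of $(\overline{U},\bullet)$. Since we work over $\mathbb{Z}_2$, it suffices to show closure under $\bullet$: if $\psi_1,\psi_2\in M(\varphi)\cap U$ with $\psi_1\neq\psi_2$, then $\varphi\leq\psi_1\wedge\psi_2\leq\psi_1\bullet\psi_2$ (using $\psi_1\wedge\psi_2\leq\psi_1\bullet\psi_2$ from the proof of Lemma~\ref{triple}), whence $\psi_1\bullet\psi_2\in M(\varphi)\cap U\subset\overline{M(\varphi)\cap U}$. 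The unit $\eta^+$ is automatically adjoined in passing to $\overline{\phantom{S}}$, so $\overline{M(\varphi)\cap U}$ is closed and hence a subgroup, giving $M(\varphi)\in\mathcal{S}(\mathbf{A})$.

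\textbf{For part (2),} assume $\mathbf{A}$ is finite. I already know from the general discussion preceding Proposition~\ref{eqv} that $M$ is one-to-one, so only surjectivity onto $\mathcal{S}(\mathbf{A})$ remains. Given $S\in\mathcal{S}(\mathbf{A})$, I would set $\varphi:=\bigwedge S$ and claim $M(\varphi)=S$. The inclusion $S\subseteq M(\varphi)$ is immediate, since $\varphi=\bigwedge S\leq\mu$ for every $\mu\in S$. The reverse inclusion $M(\varphi)\subseteq S$ is exactly where Lemma~\ref{herlem} does the work: if $\mu\in M(\varphi)$, then $\bigwedge S=\varphi\leq\mu$, and finiteness lets me apply Lemma~\ref{herlem} (which requires $S$ finite and $S\in\mathcal{S}(\mathbf{A})$) to conclude $\mu\in S$. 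Thus $M(\varphi)=S$ and $M$ is surjective onto $\mathcal{S}(\mathbf{A})$.

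\textbf{The main obstacle} is really contained in part (1), in verifying that $\psi_1\bullet\psi_2$ lands back in $M(\varphi)$, i.e. that $\varphi$ remains below the Boolean product. This is not a formal triviality: $\bullet$ is defined via symmetric difference and complementation relative to $\eta^+$, so one must confirm that the meet $\psi_1\wedge\psi_2$ sits below $\psi_1\bullet\psi_2$. Fortunately this inequality is established inside the proof of Lemma~\ref{triple} (where it is noted that $\nu_1\wedge\nu_2\leq\nu_1\bullet\nu_2$ always holds), so the argument reduces to citing that fact together with the observation $\varphi\leq\psi_1\wedge\psi_2$. Once part (1) secures that $M$ maps into $\mathcal{S}(\mathbf{A})$, part (2) is essentially a clean application of Lemma~\ref{herlem} under the finiteness hypothesis, with no further obstruction.
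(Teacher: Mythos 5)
Your proposal is correct and follows essentially the same route as the paper: part (1) is exactly the observation that $\varphi\leq\psi_1\wedge\psi_2\leq\psi_1\bullet\psi_2$ (immediate from the set-theoretic definition of $\bullet$, combined with Theorem~\ref{Boolean} to ensure $\psi_1\bullet\psi_2\in\overline{U}$), and part (2) is the paper's argument verbatim, namely $S\subset M\left(\bigwedge S\right)$ with the reverse inclusion supplied by Lemma~\ref{herlem} and injectivity by Birkhoff's theorem.
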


\begin{proof}
It follows immediately from the definition of the operation `$\bullet$' that
the image of $M$ is contained in $\mathcal{S}\left(  \mathbf{A}\right)  $. To
prove (2) assume that $S\in\mathcal{S}\left(  \mathbf{A}\right)  $. Clearly,
$S\subset M\left(  \bigwedge S\right)  $. From Lemma \ref{herlem} we deduce
that $S=M\left(  \bigwedge S\right)  $, which completes the proof.
\end{proof}

Our ultimate aim here is to characterize, for $\mathbf{A}$ with a Malcev term,
those sets from $Up\left(  \mathsf{Cm}\left(  \mathbf{A}\right)  \right)  $
that correspond to congruences $\left\{  \Theta_{\mathbf{A}}\left(
1,a\right)  :a\in A\right\}  $, or in other words, due to $1$-regularity of
$\mathbf{A}$, to elements of $A$. We start from the definition of the family
of hereditary sets in $Up\left(  \mathsf{Cm}\left(  \mathbf{A}\right)
\right)  $ representing the principal congruences.

Let $\eta\in\mathsf{Cm}\left(  \mathbf{A}\right)  $ and let $U=\eta/$%
\noindent$\sim$. We put $U^{+}:=U\!\!\uparrow\!\!\backslash\hspace{0.02in}U$.
Then%
\begin{align*}
U^{+}  &  =\left\{  \vartheta\in\mathsf{Cm}\left(  \mathbf{A}\right)
:\vartheta>\eta\right\} \\
&  =\left\{  \vartheta\in\mathsf{Cm}\left(  \mathbf{A}\right)  :\vartheta
>\varphi\text{ for all }\varphi\in U\right\} \\
&  =\left\{  \vartheta\in\mathsf{Cm}\left(  \mathbf{A}\right)  :\vartheta
\geq\eta^{+}\right\} \\
&  =M(\eta^{+})\text{.}%
\end{align*}

\begin{definition}
Let $Z\subset\mathsf{Cm}\left(  \mathbf{A}\right)  $. We say that $Z$ is
\textsl{hereditary}, if:

(I) $Z=Z\!\!\uparrow$;

(II) for all $U\in\mathsf{Cm}\left(  \mathbf{A}\right)  /\!\!\sim$\ , if
$U^{+}\subset Z$, then $\overline{Z\cap U}=\overline{U}$ or $\overline{Z\cap
U}$ is \linebreak a hyperplane in $\left(  \overline{U},\bullet\right)  $.

(We use the word `hyperplane' because we can interpret a Boolean group as a
vector space over the field $\mathbb{Z}_{2}$.)

We denote the set of all hereditary subsets of $\mathsf{Cm}\left(
\mathbf{A}\right)  $ by $\mathcal{H}\left(  \mathbf{A}\right)  $. Clearly,
$\mathcal{H}\left(  \mathbf{A}\right)  \subset\mathcal{S}\left(
\mathbf{A}\right)  $.
\end{definition}

\begin{remark}
Observe that if $U\in\mathsf{Cm}\left(  \mathbf{A}\right)  /\!\!\sim$,
$Z\in\mathcal{H}\left(  \mathbf{A}\right)  $\ and $U^{+}\nsubseteq Z$, then
$Z\cap U=\emptyset$. Converse implication is true, if $\left|  U\right|  >1$.
Moreover, if $U=\eta/$\noindent$\sim$, $\eta\in\mathsf{Cm}\left(
\mathbf{A}\right)  $, and $U^{+}\subset Z$, then $\bigwedge Z\leq\eta^{+}$.
\end{remark}

\begin{remark}
It follows from Proposition \ref{distributive} that if we assume additionally
that $\mathsf{Con}\left(  \mathbf{A}\right)  $ is distributive, then
$\mathcal{H}\left(  \mathbf{A}\right)  =\mathcal{S}\left(  \mathbf{A}\right)
=Up\left(  \mathsf{Cm}\left(  \mathbf{A}\right)  \right)  $.
\end{remark}

\begin{proposition}
\label{hyperplane}Let $\mathbf{A}$ be strongly Fregean and let $a,b\in A$.
Then $M(\Theta_{\mathbf{A}}(a,b))\in\mathcal{H}\left(  \mathbf{A}\right)  $.
\end{proposition}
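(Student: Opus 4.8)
The plan is to verify the two defining conditions of a hereditary set for $Z := M(\Theta_{\mathbf{A}}(a,b))$. Condition (I) is immediate: $Z$ lies in the image of the map $M$, hence $Z = Z\!\!\uparrow$. The whole content therefore sits in condition (II): for each class $U = \eta/\!\!\sim$ with $U^{+}\subset Z$ I must show that $\overline{Z\cap U}$ is either the full group $\overline{U}$ or a hyperplane in $\left(\overline{U},\bullet\right)$.

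First I would extract the arithmetic meaning of the hypothesis $U^{+}\subset Z$. Since $U^{+}=M(\eta^{+})$, the inclusion $M(\eta^{+})\subset M(\Theta_{\mathbf{A}}(a,b))$ says that every $\mu\in\mathsf{Cm}(\mathbf{A})$ with $\eta^{+}\leq\mu$ satisfies $(a,b)\in\mu$. Taking the meet over all such $\mu$ and using $\eta^{+}=\bigwedge M(\eta^{+})$ (Birkhoff's theorem), I conclude $(a,b)\in\eta^{+}$. This single membership is exactly what forces the indicator of $(a,b)$ to behave additively on the coset.

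The heart of the argument is to introduce $f:\overline{U}\to\mathbb{Z}_{2}$ defined by $f(\mu)=0$ if $(a,b)\in\mu$ and $f(\mu)=1$ otherwise, and to show that $f$ is $\mathbb{Z}_{2}$-linear. Unwinding the definition $\mu\bullet\nu=(\mu\div\nu)^{\prime}\cap\eta^{+}$, a pair lies in $\mu\bullet\nu$ precisely when it lies in $\eta^{+}$ and lies in $\mu$ if and only if it lies in $\nu$. Because $(a,b)\in\eta^{+}$, this yields $(a,b)\in\mu\bullet\nu$ iff $\bigl((a,b)\in\mu\Leftrightarrow(a,b)\in\nu\bigr)$, that is $f(\mu\bullet\nu)=f(\mu)+f(\nu)$ in $\mathbb{Z}_{2}$; and $f(\eta^{+})=0$ since $(a,b)\in\eta^{+}$, so $f$ carries the group identity to $0$. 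Hence $f$ is a linear functional on the $\mathbb{Z}_{2}$-space $\overline{U}$ supplied by Theorem \ref{Boolean}. Since $\overline{Z\cap U}=(Z\cap U)\cup\{\eta^{+}\}=\{\mu\in\overline{U}:(a,b)\in\mu\}=\ker f$, this kernel is either all of $\overline{U}$ (when $f\equiv 0$) or a codimension-one subspace (when $f\not\equiv 0$), i.e.\ a hyperplane. This verifies (II), and with (I) gives $Z\in\mathcal{H}(\mathbf{A})$.

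I expect the point demanding the most care to be the passage from the set-theoretic hypothesis $U^{+}\subset Z$ to the algebraic statement $(a,b)\in\eta^{+}$, because it is this membership alone that underwrites the additivity of $f$: were $(a,b)\notin\eta^{+}$, the pair $(a,b)$ could never lie in any $\mu\bullet\nu$ and $f$ would be constantly $1$ on the image of $\bullet$, destroying linearity. Everything else is a direct computation from the definition of $\bullet$ and the Boolean-group structure of $\overline{U}$ from Theorem \ref{Boolean}.
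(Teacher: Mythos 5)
Your proof is correct, and it hinges on the same two facts as the paper's own argument: the passage from $U^{+}\subset Z$ to $(a,b)\in\eta^{+}$ (which you derive exactly as the paper does, via $U^{+}=M(\eta^{+})$ and Birkhoff's theorem), and the observation that, once $(a,b)\in\eta^{+}$, membership of $(a,b)$ behaves additively with respect to $\bullet$. The difference is organizational, and it is a genuine (if modest) improvement in packaging. The paper first cites Theorem \ref{S(A)} to know that $\overline{Z\cap U}$ is a subgroup of $\left(\overline{U},\bullet\right)$, and then splits into cases: if $(a,b)\in 0_{U}$ then $Z\cap U=U$, while if $(a,b)\notin 0_{U}$ it checks that $\mu\bullet\varphi\in\overline{Z\cap U}$ for any $\mu,\varphi\in U\setminus Z$, which together with the subgroup property forces $\overline{Z\cap U}$ to be a hyperplane. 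Your indicator functional $f$ subsumes all of this at once: its kernel is automatically a subgroup, so Theorem \ref{S(A)} is never needed, and the dichotomy ``all of $\overline{U}$ or a hyperplane'' is just the dichotomy ``$f\equiv 0$ or not,'' with no case split on $0_{U}$. What the paper's route buys is brevity, since $\mathcal{S}\left(\mathbf{A}\right)$-membership was already established; what yours buys is a self-contained, uniform argument (resting only on Theorem \ref{Boolean}) that makes transparent why a hyperplane, rather than some smaller subgroup, must appear. Your closing caveat is also well placed: $(a,b)\in\eta^{+}$ is precisely the fact the paper establishes first, and without it the additivity of $f$ would indeed break down.
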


\begin{proof}
Let $a,b\in A$. From Theorem \ref{S(A)} we deduce that, $M(\Theta_{\mathbf{A}%
}(a,b))\in\mathcal{S}\left(  \mathbf{A}\right)  $. Let $U=\eta/\!\!\sim$ for
some $\eta\in\mathsf{Cm}\left(  \mathbf{A}\right)  $. Assume that
$U^{+}\subset M(\Theta_{\mathbf{A}}(a,b))$. Clearly, $U^{+}=M(\eta^{+})$. Then
$\Theta_{\mathbf{A}}(a,b)=\bigwedge M(\Theta_{\mathbf{A}}(a,b))\leq\bigwedge
U^{+}=\eta^{+}$. If $\left(  a,b\right)  \in0_{U}$, then $M\left(
\Theta_{\mathbf{A}}(a,b)\right)  \cap U=U$. On the other hand, if $\left(
a,b\right)  \notin0_{U}$, then $M\left(  \Theta_{\mathbf{A}}(a,b)\right)  \cap
U\varsubsetneq U$. For $\mu,\varphi\in U\backslash M(\Theta_{\mathbf{A}%
}(a,b))$, we have $\mu\bullet\varphi\in\overline{M(\Theta_{\mathbf{A}%
}(a,b))\cap U}$. In consequence, $\overline{M\left(  \Theta_{\mathbf{A}%
}(a,b)\right)  \cap U}$ is a hyperplane in $\left(  \overline{U}%
,\bullet\right)  $, as desired.
\end{proof}

It turns out that $\mathcal{H}\left(  \mathbf{A}\right)  $ inherits a natural
equivalence operation from the Heyting algebra $(Up\left(  \mathsf{Cm}\left(
\mathbf{A}\right)  \right)  ,\cup,\cap,\emptyset,\mathsf{Cm}\left(
\mathbf{A}\right)  ,\rightarrow)$.

\begin{theorem}
\label{H(A)eq}Let $\mathbf{A}$ be strongly Fregean. Then $\mathcal{H}\left(
\mathbf{A}\right)  $ is closed under the equivalence operation
$\leftrightarrow$ in $Up\left(  \mathsf{Cm}\left(  \mathbf{A}\right)  \right)
$.
\end{theorem}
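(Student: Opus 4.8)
The plan is to set $W:=S\leftrightarrow T$ and to verify directly that $W$ satisfies conditions (I) and (II) of the definition of a hereditary set. Condition (I) is immediate, since $W\in Up(\mathsf{Cm}(\mathbf{A}))$ by the very construction of $\leftrightarrow$. I shall also use the alternative description of $W$ as the largest $C\in Up(\mathsf{Cm}(\mathbf{A}))$ with $C\cap S=C\cap T$, so that in particular $W\cap S=W\cap T$.

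For condition (II), fix a class $U=\eta/\!\!\sim$ with $U^{+}\subseteq W$ and write $V:=\overline{U}$. The crucial preliminary step is to describe $W\cap U$ explicitly. Since every $\mu\in U$ satisfies $\mu^{+}=\eta^{+}$, the only elements of $\mathsf{Cm}(\mathbf{A})$ lying above $\mu$ are $\mu$ itself and the members of $U^{+}=M(\eta^{+})$; that is, $\mu\!\!\uparrow\cap\,\mathsf{Cm}(\mathbf{A})=\{\mu\}\cup U^{+}$. On the other hand, from $U^{+}\subseteq W$ and $W\cap S=W\cap T$ I get $U^{+}\cap(S\div T)=\emptyset$, so no element of $U^{+}$ can witness membership of $\mu$ in $(S\div T)\!\!\downarrow$. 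Hence for $\mu\in U$ one has $\mu\in W$ iff $\mu\notin S\div T$, giving $W\cap U=U\setminus(S\div T)$. Since for $\mu\in U$ membership in $S$ (resp. $T$) coincides with membership in $A:=\overline{S\cap U}$ (resp. $B:=\overline{T\cap U}$), and since $\eta^{+}\in A\cap B$, we obtain
\[
\overline{W\cap U}=V\setminus(A\div B).
\]

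It remains to see that $V\setminus(A\div B)$ is a subgroup of index at most $2$ in the Boolean group $V$, i.e. that it equals $V$ or is a hyperplane, and here I distinguish two cases according to whether $U^{+}\subseteq S$. If $U^{+}\nsubseteq S$, then (since $U^{+}\subseteq W$ forces $U^{+}\cap S=U^{+}\cap T$) also $U^{+}\nsubseteq T$, and the Remark following the definition of $\mathcal{H}(\mathbf{A})$ gives $S\cap U=T\cap U=\emptyset$; thus $A=B=\{\eta^{+}\}$, $A\div B=\emptyset$, and $\overline{W\cap U}=V=\overline{U}$. If instead $U^{+}\subseteq S$, then $U^{+}\subseteq T$ as well, so condition (II) for the hereditary sets $S$ and $T$ applies and tells us that $A$ and $B$ are each either $V$ or a hyperplane, hence subgroups of index at most $2$. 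For such a subgroup the indicator map $\chi_{A}:V\to\mathbb{Z}_{2}$ (with $\chi_{A}(x)=0$ iff $x\in A$) is a group homomorphism with kernel $A$, and likewise for $\chi_{B}$; consequently
\[
\overline{W\cap U}=\{x\in V:\chi_{A}(x)=\chi_{B}(x)\}=\ker(\chi_{A}+\chi_{B}),
\]
which is a subgroup of index at most $2$, i.e. $\overline{U}$ or a hyperplane. This verifies (II) and completes the argument.

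The main obstacle is the first displayed identity $\overline{W\cap U}=V\setminus(A\div B)$: everything hinges on showing that passing to $W$ replaces $S\cap U$ and $T\cap U$ by the complement of their symmetric difference inside $V$, and this in turn relies on the ``no contamination from above'' fact $U^{+}\cap(S\div T)=\emptyset$ together with the poset description of $\mu\!\!\uparrow$. Once this identity is in place the reduction to the homomorphism $\chi_{A}+\chi_{B}$ is routine, the only genuine subtlety being the case distinction on $U^{+}\subseteq S$, which is forced because the codimension information about $A$ and $B$ is supplied by heredity of $S$ and $T$ precisely when $U^{+}$ lies below the class in question.
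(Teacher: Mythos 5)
Your proof is correct and takes essentially the same route as the paper's: both arguments hinge on the observation that $U^{+}\subseteq S\leftrightarrow T$ forces $U^{+}\cap(S\div T)=\emptyset$, hence $(S\leftrightarrow T)\cap U=(S\div T)'\cap U$, followed by the same case split on whether $U^{+}\subseteq S$ (the paper's cases 1 and 2). Your two deviations are purely presentational, though tidier: you derive the identity $(S\leftrightarrow T)\cap U=(S\div T)'\cap U$ uniformly from $\mu\!\uparrow\cap\,\mathsf{Cm}(\mathbf{A})=\{\mu\}\cup U^{+}$ before splitting into cases (the paper reproves it separately in each case), and you package the group-theoretic step as $\ker(\chi_{A}+\chi_{B})$ instead of the paper's element-wise check that $\mu\bullet\nu$ stays in the agreement set.
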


\begin{proof}
Let $S,T\in\mathcal{H}(\mathbf{A})$. Then $S\leftrightarrow T=\left(  \left(
S\div T\right)  \downarrow\right)  ^{\prime}\in Up\left(  \mathsf{Cm}\left(
\mathbf{A}\right)  \right)  $. To prove that $S\leftrightarrow T$ is
hereditary it is enough to show for all $U\in\mathsf{Cm}\left(  \mathbf{A}%
\right)  /\!\!\sim$\ , if $U^{+}\subset S\leftrightarrow T$, then
$\overline{(S\leftrightarrow T)\cap U}=\overline{U}$ or $\overline
{(S\leftrightarrow T)\cap U}$ is a hyperplane in $\left(  \overline{U}%
,\bullet\right)  $. Let $U\in\mathsf{Cm}\left(  \mathbf{A}\right)  /\!\!\sim$
and $U^{+}\subset S\leftrightarrow T$. We know that $U^{+}\cap S=U^{+}\cap T$.
Let us consider two possibilities: 1) $U^{+}\nsubseteq S$ and $U^{+}\nsubseteq
T$ (and then $U\cap S=U\cap T=\emptyset$); 2) $U^{+}\subset S\cap T$.

1) In this situation we show that $U\subset S\leftrightarrow T$. Assume that
$\mu\in U\backslash(S\leftrightarrow T)$. Then $\mu\in\left(  S\div T\right)
\downarrow$, and so there is $\varphi\in\mathsf{Cm}\left(  \mathbf{A}\right)
$ such that $\mu\leq\varphi\in S\div T$. There is no loss of generality in
assuming that $\varphi\in S\backslash T$. Clearly $\mu<\varphi$, since
otherwise $\mu\in U\cap S=\emptyset$, a contradiction. Hence $\varphi\in
U^{+}\cap S$ and $\varphi\notin U^{+}\cap T$, which contradicts our assumption.

2) In this case $\overline{S\cap U}$ either equals $\overline{U}$ or is a
hyperplane in $\left(  \overline{U},\bullet\right)  $. The same is true for
$\overline{T\cap U}$. It is straightforward to prove that for $\mu\neq\nu$ and
$\mu,\nu\in\left(  S\div T\right)  ^{\prime}\cap U=(S\cap T\cap U)\cup
(S^{\prime}\cap T^{\prime}\cap U)$ or $\mu,\nu\in\left(  S\div T\right)  \cap
U$ we have $\mu\bullet\nu\in\left(  S\div T\right)  ^{\prime}\cap U$. Hence
$\overline{\left(  S\div T\right)  ^{\prime}\cap U}$ is either $\overline{U}$
or a hyperplane in $\left(  \overline{U},\bullet\right)  $. Now, observe that
$S\div T\subset\left(  S\div T\right)  \downarrow$. Then $(S\leftrightarrow
T)\cap U\subset\left(  S\div T\right)  ^{\prime}\cap U$. It is enough to show
that these sets are equal. Assume that $\mu\in\left(  S\div T\right)
^{\prime}\cap U$ and $\mu\notin S\leftrightarrow T$. Then $\mu\in\left(  S\div
T\right)  \downarrow$. Thus, there is $\varphi\in\mathsf{Cm}\left(
\mathbf{A}\right)  $ such that $\mu\leq\varphi\in S\div T$. Clearly, $\mu
\neq\varphi$, and so $\mu<\varphi$. In consequence, $\varphi\in U^{+}\subset
S\cap T$, a contradiction.
\end{proof}

In \cite[Theorem 3.8]{IdzSloWro09} the following result, showing that the
equivalential algebras form a paradigm of congruence permutable Fregean
varieties, was proven. Recall that equivalential algebras are defined as
equivalential subreducts of Heyting algebras \cite{KabWro75}. For more
information on equivalential algebras, see \cite{Slo08}.

\begin{theorem}
\label{Malcev}Let $\mathrm{H}(\mathbf{A})$ be congruence orderable with
respect to a constant term$~1$. Then the following conditions are equivalent:

\begin{enumerate}
\item $\mathbf{A}$ has a Malcev term;

\item $\mathbf{A}$ has a binary term $e$, satisfying one of two equivalent conditions:

\begin{enumerate}
\item $e$ is a principle congruence term, i.e., $\Theta_{A}\left(  a,b\right)
=\Theta_{A}\left(  1,e\left(  a,b\right)  \right)  $ for all $a,b\in A$;

\item $e$-reduct of $\mathbf{A}$ is an equivalential algebra.
\end{enumerate}
\end{enumerate}

Note that if these conditions are satisfied, then $\mathbf{A}$ is strongly Fregean.
\end{theorem}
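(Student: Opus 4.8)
The plan is to organize everything around one observation: a \emph{principal congruence term is unique as an operation on $\mathbf{A}$}. Indeed, if $e,e'$ both satisfy $\Theta_{\mathbf{A}}(a,b)=\Theta_{\mathbf{A}}(1,e(a,b))=\Theta_{\mathbf{A}}(1,e'(a,b))$, then congruence orderability of $\mathbf{A}\in\mathrm{H}(\mathbf{A})$ forces $e(a,b)=e'(a,b)$ for all $a,b$. So once a Malcev term $p$ is present there is essentially only one candidate, namely $e(x,y):=p(x,y,1)$, and the theorem reduces to relating this candidate to the equivalence connective. I would prove the cycle $(1)\Rightarrow(2a)\Rightarrow(2b)\Rightarrow(1)$ together with the per-term implication $(2b)\Rightarrow(2a)$: the last two implications give the equivalence of $(a)$ and $(b)$ for a fixed $e$, and the cycle gives $(1)\Leftrightarrow(2)$.

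For $(1)\Rightarrow(2a)$ I would set $e(x,y):=p(x,y,1)$, so that $e(x,x)=p(x,x,1)=1$ and $(1,e(a,b))=(p(a,a,1),p(a,b,1))\in\Theta_{\mathbf{A}}(a,b)$, giving $\Theta_{\mathbf{A}}(1,e(a,b))\leq\Theta_{\mathbf{A}}(a,b)$ for free. The reverse inclusion rests on the following detection lemma, which I expect to be the engine of this direction: \emph{in any congruence orderable algebra with a Malcev term $p$, $p(a,b,1)=1$ implies $a=b$}. To see it, assume $a\neq b$; by orderability $\Theta(1,a)\neq\Theta(1,b)$, and since each principal congruence is the meet of the members of $\mathsf{Cm}$ above it, one finds $\varphi\in\mathsf{Cm}(\mathbf{A})$ with, say, $(1,b)\in\varphi$ and $(1,a)\notin\varphi$; in $\mathbf{A}/\varphi$ the identity $p(x,y,y)=x$ gives $\overline{p(a,b,1)}=p(\bar a,\bar 1,\bar 1)=\bar a\neq\bar 1$, contradicting $p(a,b,1)=1$. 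Applying this lemma inside the congruence orderable quotient $\mathbf{A}/\Theta_{\mathbf{A}}(1,e(a,b))$, where $\overline{e(a,b)}=\bar 1$, yields $\bar a=\bar b$, i.e. $(a,b)\in\Theta_{\mathbf{A}}(1,e(a,b))$, so $e$ is a principal congruence term.

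Implication $(2b)\Rightarrow(2a)$ runs along exactly the same lines, the detection lemma being replaced by the quasi-identity $e(x,y)=1\Rightarrow x=y$ valid in every equivalential algebra (these are $\leftrightarrow$-subreducts of Heyting algebras, where $x\leftrightarrow y$ is the top element only when $x=y$); since $e(x,x)=1$, applying it in the equivalential quotient $\mathbf{A}/\Theta_{\mathbf{A}}(1,e(a,b))$ again gives $(a,b)\in\Theta_{\mathbf{A}}(1,e(a,b))$. For $(2b)\Rightarrow(1)$ I would invoke that the equivalential algebras form a congruence permutable variety \cite{Slo08,IdzSloWro09}, so $\leftrightarrow$ carries a Malcev term $m(x,y,z)$; substituting $e$ for $\leftrightarrow$ turns $m$ into a Malcev term of $\mathbf{A}$. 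The genuinely hard implication is $(2a)\Rightarrow(2b)$, and I expect it to be the main obstacle. My approach would be to use the subdirect decomposition $\mathbf{A}\hookrightarrow\prod_{\varphi}\mathbf{A}/\varphi$ over $\varphi\in\mathsf{Cm}(\mathbf{A})$: since the equivalential algebras are a variety, it suffices that each $e$-reduct $(\mathbf{A}/\varphi,e)$ is equivalential. Each $\mathbf{A}/\varphi$ is subdirectly irreducible and congruence orderable, so by Proposition \ref{natord-mu} its monolith block $1/\varphi^{+}$ has exactly two elements and all other $\varphi^{+}$-blocks are trivial. The crux is to show that this two-element top block, together with $e$ being a principal congruence term, forces the polynomial behaviour of $e$ on $\mathbf{A}/\varphi$ to match that of the two-element equivalential algebra, so that $(\mathbf{A}/\varphi,e)$ embeds into a Heyting algebra. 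This is where the commutator theory of \cite{FreMcK87} and $\mathrm{(SC1)}$ (Theorem \ref{FreSC1}) enter: one splits according to whether $\varphi^{+}$ is Abelian over $\varphi$ (Lemma \ref{centralmon}), the Abelian case endowing the block with an affine $\mathbb{Z}_{2}$-structure matching $\leftrightarrow$, while Proposition \ref{altern} controls the action of $e$ across distinct blocks; this is the technical core carried out in \cite{IdzSloWro09}.

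The closing remark is then immediate. A Malcev term makes $\mathcal{V}(\mathbf{A})$ congruence permutable, hence congruence modular. A principal congruence term makes $\mathbf{A}$ $1$-regular: if $1/\alpha=1/\beta$ and $(a,b)\in\alpha$, then $(1,e(a,b))\in\Theta_{\mathbf{A}}(a,b)\leq\alpha$, so $e(a,b)\in 1/\alpha=1/\beta$, whence $\Theta_{\mathbf{A}}(a,b)=\Theta_{\mathbf{A}}(1,e(a,b))\leq\beta$ and $(a,b)\in\beta$; by symmetry $\alpha=\beta$. Since $\mathbf{A}$ is congruence orderable (being in $\mathrm{H}(\mathbf{A})$) and $1$-regular, $\mathrm{H}(\mathbf{A})$ is Fregean, and therefore $\mathbf{A}$ is strongly Fregean.
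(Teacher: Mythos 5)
A preliminary remark on the comparison itself: the paper contains no proof of Theorem \ref{Malcev} --- it is quoted verbatim from \cite[Theorem 3.8]{IdzSloWro09} --- so your proposal has to stand entirely on its own, and its two load-bearing implications both fail. The first failure is in $(1)\Rightarrow(2a)$: your ``detection lemma'' (in a congruence orderable algebra with Malcev term $p$, $p(a,b,1)=1$ implies $a=b$) is false, and the error hides in the word ``say''. The Malcev identities simplify $p(\bar a,\bar b,\bar 1)$ only when $\bar b=\bar 1$; they say nothing about $p(\bar 1,\bar b,\bar 1)$. In fact, from $p(a,b,1)=1$ one gets $(1,a)\in\Theta_{\mathbf{A}}(1,b)$ for free (reduce modulo $\Theta_{\mathbf{A}}(1,b)$ and use $p(x,y,y)=x$), so if $a\neq b$ then $\Theta_{\mathbf{A}}(1,a)$ lies strictly below $\Theta_{\mathbf{A}}(1,b)$ and \emph{every} separating $\varphi\in\mathsf{Cm}(\mathbf{A})$ has $(1,a)\in\varphi$ and $(1,b)\notin\varphi$ --- precisely the case your argument cannot treat; the case you do treat is vacuous. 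Concretely, let $B=\{1,c\}$ and define a ternary $p$ on $B$ by the Malcev identities together with the two unconstrained values $p(1,c,1):=1$ and $p(c,1,c):=c$. Then $\mathbf{B}=(B;p,1)$ has $p$ as a Malcev term, and $\mathbf{B}$ and its quotients are trivially congruence orderable, yet $p(1,c,1)=1$ with $1\neq c$. Accordingly $e(x,y):=p(x,y,1)$ fails (2a): $\Theta_{\mathbf{B}}(1,e(1,c))=\mathbf{0}_{\mathbf{B}}$ while $\Theta_{\mathbf{B}}(1,c)=\mathbf{1}_{\mathbf{B}}$. The theorem is still true for $\mathbf{B}$, but via a different term, e.g.\ $p(p(1,x,y),1,p(1,y,x))$, which is the Boolean group operation on $B$; so passing from a Malcev term to a principal congruence term genuinely requires a more elaborate construction --- that is exactly the nontrivial content of \cite[Theorem 3.8]{IdzSloWro09}, and it also shows your framing that $p(x,y,1)$ is ``the only candidate'' is misleading (the principal congruence operation is unique, but it need not be $p(x,y,1)$).

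The second failure is $(2a)\Rightarrow(2b)$, which you yourself identify as the hard implication and then do not prove: your sketch ends by deferring to ``the technical core carried out in \cite{IdzSloWro09}'', i.e.\ to the statement being proved. Moreover, the sketch could not be executed as designed. Theorem \ref{FreSC1} and Lemma \ref{centralmon} presuppose that $\mathcal{V}(\mathbf{A})$ is congruence modular, and in your architecture modularity becomes available only once a Malcev term is known, i.e.\ once (1) is known --- but you obtain (1) only from (2b), so the route is circular. (From (2a) alone one does get $1$-regularity of all quotients, hence $\mathrm{H}(\mathbf{A})$ Fregean, but not congruence modularity of the generated variety.) The reduction to subdirectly irreducible quotients is fine, since equivalential algebras form a variety, but the claim that Proposition \ref{natord-mu} forces $e$ on $\mathbf{A}/\varphi$ to ``match the two-element equivalential algebra'' is not a workable step: $\mathbf{A}/\varphi$ may be arbitrarily large, and Proposition \ref{natord-mu} controls only the monolith cosets. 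For the record, the correct parts of your proposal are: the uniqueness of a principal congruence operation under congruence orderability; $(2b)\Rightarrow(2a)$ via the quasi-identity $e(x,y)=1\Rightarrow x=y$, valid in all equivalential algebras and inherited by quotient reducts because these algebras form a variety \cite{KabWro75}; $(2b)\Rightarrow(1)$ via congruence permutability of equivalential algebras; and the closing strong-Fregean remark. But with $(1)\Rightarrow(2a)$ broken and $(2a)\Rightarrow(2b)$ absent, neither the equivalence of (1) and (2) nor the internal equivalence of (a) and (b) is established.
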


Now, from Proposition \ref{eqv} and Theorem \ref{Malcev} we get

\begin{corollary}
\label{e}Let $\mathbf{A}$ be strongly Fregean with a principle congruence term
$e$. Then $M$ preserves equivalence operation, that is
\[
M(e(a,b))=M(a)\leftrightarrow M(b)
\]
for $a,b\in A$.
\end{corollary}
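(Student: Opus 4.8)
The plan is to chain together Proposition~\ref{eqv} and Theorem~\ref{Malcev}, since the substantive work has already been done; the only real content is to unwind the notational abbreviation $M(c):=M(\Theta_{\mathbf{A}}(1,c))$ correctly and to confirm that the hypotheses of the two cited results are in force.

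First I would note that since $\mathbf{A}$ is strongly Fregean, $\mathrm{H}(\mathbf{A})$ is Fregean and hence congruence orderable with respect to $1$; this is exactly the standing hypothesis needed for both Proposition~\ref{eqv} and Theorem~\ref{Malcev}. Because $e$ is a principle congruence term, condition (2a) of Theorem~\ref{Malcev} holds, giving the congruence identity $\Theta_{\mathbf{A}}(a,b)=\Theta_{\mathbf{A}}(1,e(a,b))$ for all $a,b\in A$. (Strictly, this is just the defining property of a principle congruence term as spelled out there, so no deeper part of Theorem~\ref{Malcev} is invoked.)

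Next I would unfold the definition of $M$ on elements: by definition $M(e(a,b))=M(\Theta_{\mathbf{A}}(1,e(a,b)))$, and this equals $M(\Theta_{\mathbf{A}}(a,b))$ by the congruence identity just obtained. Proposition~\ref{eqv} then yields $M(\Theta_{\mathbf{A}}(a,b))=M(a)\leftrightarrow M(b)$, and stringing the three equalities together gives the corollary.

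There is no genuine obstacle here: the statement is a formal consequence of the two preceding results. The only point requiring a moment's care is that the symbol $M$ is overloaded — applied both to congruences and, via the abbreviation $M(c)=M(\Theta_{\mathbf{A}}(1,c))$, to elements of $A$ — so one must check that the element-version $M(e(a,b))$ appearing on the left of the corollary expands to precisely the congruence $M(\Theta_{\mathbf{A}}(a,b))$ that is fed into Proposition~\ref{eqv}.
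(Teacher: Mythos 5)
Your proposal is correct and is precisely the paper's argument: the corollary is stated there as an immediate consequence of Proposition~\ref{eqv} and Theorem~\ref{Malcev}, which is exactly the chain $M(e(a,b))=M(\Theta_{\mathbf{A}}(1,e(a,b)))=M(\Theta_{\mathbf{A}}(a,b))=M(a)\leftrightarrow M(b)$ you spell out. Your care in checking that strong Fregeanness supplies the congruence orderability of $\mathrm{H}(\mathbf{A})$ needed for Proposition~\ref{eqv}, and in unwinding the overloaded notation $M(c)=M(\Theta_{\mathbf{A}}(1,c))$, fills in the details the paper leaves implicit.
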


In particular, the assumptions of Theorem \ref{Malcev} are true if
$\mathbf{A}$ is finite, congruence orderable and congruence permutable.
Namely, the following proposition holds:

\begin{proposition}
\label{H(A)}Let $\mathbf{A}$ be a finite, congruence orderable (with respect
to a constant term$~1$) and congruence permutable algebra. Then $\mathrm{H}%
(\mathbf{A})$ is congruence orderable.
\end{proposition}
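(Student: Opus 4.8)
The plan is to verify congruence orderability of $\mathrm{H}(\mathbf{A})$ through the criterion recorded just before Proposition \ref{altern}, namely \cite[Lemma 3.3]{IdzSlo01}: $\mathrm{H}(\mathbf{A})$ is congruence orderable iff for all $a,b\in A$ and $\alpha\in\mathsf{Con}(\mathbf{A})$ one has $\alpha\vee\Theta_{\mathbf{A}}(1,a)=\alpha\vee\Theta_{\mathbf{A}}(1,b)$ exactly when $(a,b)\in\alpha$. Only the direction ``$\alpha\vee\Theta_{\mathbf{A}}(1,a)=\alpha\vee\Theta_{\mathbf{A}}(1,b)$ forces $(a,b)\in\alpha$'' is nontrivial, and I would establish it by contradiction via an infinite-descent argument in the order that congruence orderability induces on $A$, using finiteness to produce the contradiction. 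Notably this route needs neither $1$-regularity nor a Malcev term.

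Set $x\le y$ iff $\Theta_{\mathbf{A}}(1,x)\subseteq\Theta_{\mathbf{A}}(1,y)$; congruence orderability of $\mathbf{A}$ is precisely antisymmetry of this relation, so $(A,\le)$ is a finite poset. Call a pair $(x,y)$ \emph{bad} if $\alpha\vee\Theta_{\mathbf{A}}(1,x)=\alpha\vee\Theta_{\mathbf{A}}(1,y)$ while $(x,y)\notin\alpha$, and suppose a bad pair exists. The engine of the argument is congruence permutability, which gives $\alpha\vee\Theta_{\mathbf{A}}(1,x)=\Theta_{\mathbf{A}}(1,x)\circ\alpha$. Since $(1,x),(1,y)$ lie in this join, so does $(x,y)$, and the factorization yields an element $r$ with $(x,r)\in\Theta_{\mathbf{A}}(1,x)$ and $(r,y)\in\alpha$. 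From $(x,r)\in\Theta_{\mathbf{A}}(1,x)$ we get $\Theta_{\mathbf{A}}(x,r)\le\Theta_{\mathbf{A}}(1,x)$, hence $\Theta_{\mathbf{A}}(1,r)\le\Theta_{\mathbf{A}}(1,x)\vee\Theta_{\mathbf{A}}(x,r)=\Theta_{\mathbf{A}}(1,x)$, i.e. $r\le x$; moreover $r\ne x$, because $(r,y)\in\alpha$ whereas $(x,y)\notin\alpha$, so by orderability $r<x$ strictly.

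It then remains to check that $(r,x)$ is again bad and that this lets the descent continue. Indeed $(r,y)\in\alpha$ gives $\alpha\vee\Theta_{\mathbf{A}}(1,r)=\alpha\vee\Theta_{\mathbf{A}}(1,y)=\alpha\vee\Theta_{\mathbf{A}}(1,x)$, and $(r,x)\notin\alpha$ since otherwise $(x,y)\in\alpha$. Applying the same construction to $(r,x)$ produces $r_{2}<r$ with $(r_{2},r)$ bad, and iterating yields a strictly descending chain $x>r>r_{2}>\cdots$ in the finite poset $(A,\le)$, the desired contradiction. I expect the crux to be the extraction of the strictly smaller element: permutability is exactly what permits writing the join as the single composition $\Theta_{\mathbf{A}}(1,x)\circ\alpha$ and thereby peeling off an $r$ lying below $x$, while congruence orderability is what upgrades $r\le x$ to $r<x$ and prevents the chain from stabilizing; finiteness then closes the argument. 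By the cited criterion this proves $\mathrm{H}(\mathbf{A})$ congruence orderable.
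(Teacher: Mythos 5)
Your proof is correct and takes essentially the same approach as the paper's: both reduce the claim to the criterion of \cite[Lemma 3.3]{IdzSlo01} quoted before Proposition \ref{altern}, use congruence permutability to factor the join $\alpha\vee\Theta_{\mathbf{A}}(1,x)$ as a relational composition so as to extract a witness element strictly closer to $1$ in the natural order, and then invoke finiteness of that order. The only difference is bookkeeping: you iterate the extraction to produce an infinite strictly monotone chain, while the paper chooses a maximal witness $c_{0}$ up front and collapses everything in one further round of the same permutability argument.
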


\begin{proof}
Let $\alpha\in\mathsf{Con}\left(  \mathbf{A}\right)  $, $a,b\in A$ fulfill
$\alpha\vee\Theta_{\mathbf{A}}\left(  1,a\right)  =\alpha\vee\Theta
_{\mathbf{A}}\left(  1,b\right)  $. It is enough to show that $(a,b)\in\alpha
$. On the contrary, suppose that $(a,b)\notin\alpha$. There is no loss of
generality in assuming that $a\nleqslant b$. Then, from congruence
permutability of $\mathbf{A}$, there exists $c\in A$ such that
$b\overset{\alpha}{\equiv}c\overset{\Theta_{\mathbf{A}}\left(  1,a\right)
}{\equiv}1$. Thus, $c>a$ and $\left(  b,c\right)  \in\alpha$. Let $c_{0}\in A$
be a maximal element with this property. Hence, $\left(  1,b\right)  \in
\alpha\vee\Theta_{\mathbf{A}}\left(  1,c_{0}\right)  $, and so $\left(
1,a\right)  \in\alpha\vee\Theta_{\mathbf{A}}\left(  1,c_{0}\right)  $.
Consequently, there is $d\in A$ such that $a\overset{\alpha}{\equiv
}d\overset{\Theta_{\mathbf{A}}\left(  1,c_{0}\right)  }{\equiv}1$. Clearly, we
have $d\geq c_{0}>a$ and $\left(  1,b\right)  \in\alpha\vee\Theta_{\mathbf{A}%
}\left(  1,d\right)  $, and so there exists $t\in A$ such that
$b\overset{\alpha}{\equiv}t\overset{\Theta_{\mathbf{A}}\left(  1,d\right)
}{\equiv}1$. Hence, $t\geq d\geq c_{0}>a$ and $(b,t)\in\alpha$. Finally, from
maximality of $c_{0}$ we get $t=c_{0}=d$, and so $a\overset{\alpha}{\equiv
}d=c_{0}\overset{\alpha}{\equiv}b$, a contradiction.
\end{proof}

\begin{remark}
In fact, the congruence permutability condition in the above proposition can
be weakened. We see from the proof that it suffices to assume that
$\mathbf{A}$ is \textsl{congruence }$1$\textsl{-permutable}, i.e.,
$(1,a)\in\alpha\vee\beta$ iff $(1,a)\in\alpha\circ\beta$ for $\alpha,\beta
\in\mathsf{Con}\left(  \mathbf{A}\right)  $, $a\in A$.
\end{remark}

The following theorem extends the result proven for equivalential algebras in
\cite[Theorem 5]{Slo05}.

\begin{theorem}
\label{main}Let $\mathbf{A}$ is finite, congruence orderable (with respect to
a constant term$~1$) with a Malcev term. Then

\begin{enumerate}
\item $\mathcal{V}(\mathbf{A)}$ is Fregean with a principle congruence term
$e$ that turns every algebra in $\mathcal{V}(\mathbf{A)}$ into an
equivalential algebra;

\item the map%
\[
A\ni a\rightarrow M\left(  a\right)  :=\left\{  \mu\in\mathsf{Cm}\left(
\mathbf{A}\right)  :\left(  1,a\right)  \in\mu\right\}  \in\mathcal{H}\left(
\mathbf{A}\right)
\]
establishes a one-to-one correspondence between $A$ and $\mathcal{H}\left(
\mathbf{A}\right)  $. Moreover, \linebreak$\left(  \mathcal{H}\left(
\mathbf{A}\right)  ,\leftrightarrow\right)  $ is an equivalential algebra
isomorphic with $(A,e)$.
\end{enumerate}
\end{theorem}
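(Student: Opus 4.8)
The plan is to derive (1) from Theorem~\ref{Malcev} and then feed it into the representation machinery of this section to prove (2). For (1): the Malcev term makes $\mathcal V(\mathbf A)$ congruence permutable, so $\mathbf A$ is finite, congruence orderable and congruence permutable, and Proposition~\ref{H(A)} yields that $\mathrm H(\mathbf A)$ is congruence orderable. This is exactly the hypothesis of Theorem~\ref{Malcev}, which therefore supplies a binary principal congruence term $e$ whose reduct turns $\mathbf A$ into an equivalential algebra and tells us $\mathbf A$ is strongly Fregean. Since equivalential algebras form a variety \cite{KabWro75}, the identities defining them in terms of $e$ hold throughout $\mathcal V(\mathbf A)$, so the $e$-reduct of every $\mathbf B\in\mathcal V(\mathbf A)$ is equivalential; the identities $e(x,x)=1$, $e(1,x)=x$ together with associativity and commutativity then force $e$ to remain a principal congruence term in each such $\mathbf B$ (for instance $(a,b)\in\Theta_{\mathbf B}(1,e(a,b))$ follows by applying the polynomial $x\mapsto e(x,a)$ to $(1,e(a,b))$ and computing $e(e(a,b),a)=b$). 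Invoking the description of congruence permutable Fregean varieties as expansions of equivalential algebras \cite{IdzSloWro09}, I conclude that $\mathcal V(\mathbf A)$ is Fregean with $e$ as its principal congruence term. The point that needs care here is that Fregeanness must be checked for the whole variety and not only for $\mathbf A$ and its quotients, which is where the equivalential expansion framework does the work.

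For (2) the homomorphism part is short. The map lands in $\mathcal H(\mathbf A)$ by Proposition~\ref{hyperplane} (take the pair $(1,a)$), and it is injective because $M$ is one-to-one on $\mathsf{Con}(\mathbf A)$ while $a\mapsto\Theta_{\mathbf A}(1,a)$ is injective by congruence orderability. By Theorem~\ref{H(A)eq}, $\mathcal H(\mathbf A)$ is closed under $\leftrightarrow$, and as a $\leftrightarrow$-subalgebra of the equivalential reduct of the Heyting algebra $Up(\mathsf{Cm}(\mathbf A))$ it is itself an equivalential algebra; Corollary~\ref{e} gives $M(e(a,b))=M(a)\leftrightarrow M(b)$, so $M\colon (A,e)\to(\mathcal H(\mathbf A),\leftrightarrow)$ is an embedding of equivalential algebras. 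It remains to prove surjectivity, which is the heart of the theorem.

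For surjectivity, take $Z\in\mathcal H(\mathbf A)\subset\mathcal S(\mathbf A)$; since $\mathbf A$ is finite, Theorem~\ref{S(A)} gives $Z=M(\alpha)$ with $\alpha=\bigwedge Z$, so everything reduces to showing that $\alpha$ is principal, i.e.\ $\alpha=\Theta_{\mathbf A}(1,a)$ for some $a$. By $1$-regularity this is equivalent to the downset $1/\alpha$ (downward closed in the natural order on $A$) possessing a greatest element. I would establish this by induction on $\delta(\mathsf{Con}(\mathbf A))$, following the pattern of the proof of Theorem~\ref{dimension}: choose an atom $\Theta_{\mathbf A}(1,c)$ and distinguish whether $\Theta_{\mathbf A}(1,c)\leq\alpha$. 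In the affirmative case I pass to $\mathbf A/\Theta_{\mathbf A}(1,c)$, where the $\mathrm{PIP}$-classes and the hereditary sets correspond as in that proof, apply the inductive hypothesis to the image of $Z$, and lift the resulting generator to an element above $c$. In the negative case I use the distinguished class $V=\mu/\!\!\sim$ with $(1,c)\notin\mu$ together with clause (II) of heredity to split off the atom and again induct. The main obstacle is precisely this last verification: that clause (II)---the hyperplane-or-whole-subgroup dichotomy for each $\overline{Z\cap U}$---is strong enough to prevent $1/\alpha$ from having two distinct maximal elements, and that the generator produced in the quotient can be lifted compatibly with $\leq$. Once $\alpha=\Theta_{\mathbf A}(1,a)$ is in hand, $Z=M(a)$, so $M$ is bijective and, with the previous paragraph, an isomorphism $(A,e)\cong(\mathcal H(\mathbf A),\leftrightarrow)$.
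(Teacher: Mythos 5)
Your part (1) has a genuine gap at its last step. From Theorem~\ref{Malcev} you correctly get that the $e$-reduct of $\mathbf{A}$ is equivalential, that this is an equational property and so holds for every $\mathbf{B}\in\mathcal{V}(\mathbf{A})$, and that $e$ remains a principal congruence term throughout the variety (although your parenthetical justification is wrong: $e(e(a,b),a)=b$ is the Boolean-group law and fails in equivalential algebras, e.g.\ in the $\leftrightarrow$-reduct of the three-element Heyting chain $0<m<1$ one has $e(e(0,m),0)=1\neq m$; the claim itself is the standard property $\Theta(a,b)=\Theta(1,e(a,b))$ of equivalential algebras \cite{KabWro75}). These facts yield $1$-regularity of every member of $\mathcal{V}(\mathbf{A})$, but they do \emph{not} yield congruence orderability of every member, which is the other half of Fregeanness. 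The result of \cite{IdzSloWro09} you invoke goes the opposite way: congruence permutable Fregean varieties consist of expansions of equivalential algebras, not conversely. The converse inference is simply false: expand the Boolean group $\mathbb{Z}_2\times\mathbb{Z}_2$ (with $1=(0,0)$ and $e=+$) by the coordinate swap $s(x,y)=(y,x)$. Every algebra in the variety generated by this expansion has an equivalential $e$-reduct with $e$ a principal congruence term, and that variety is congruence permutable and $1$-regular, yet the generating algebra is not congruence orderable, since $\Theta(1,(1,0))=\Theta(1,(0,1))$ is the full congruence. So the ``equivalential expansion framework'' cannot do the work you assign to it. The paper closes this gap by a different route: it first shows every $\mathbf{B}\in\mathrm{S}(\mathbf{A})$ is congruence orderable (a short computation with $\Theta_{\mathbf{B}}(1,a)\leq\Theta_{\mathbf{A}}(1,a)\cap B^{2}$), then applies Proposition~\ref{H(A)} to each finite subalgebra to get that $\mathrm{HS}(\mathbf{A})$ is congruence orderable, and finally invokes \cite[Theorem 2.10]{IdzSloWro09}, which for finite $\mathbf{A}$ upgrades ``$\mathrm{HS}(\mathbf{A})$ congruence orderable and $\mathcal{V}(\mathbf{A})$ $1$-regular'' to ``$\mathcal{V}(\mathbf{A})$ Fregean.'' Your proof never touches subalgebras, and without some such step Fregeanness of the variety remains unproven.

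In part (2) your homomorphism half (image in $\mathcal{H}(\mathbf{A})$ via Proposition~\ref{hyperplane}, injectivity via Theorem~\ref{S(A)}, preservation of $\leftrightarrow$ via Corollary~\ref{e} and Theorem~\ref{H(A)eq}) agrees with the paper, but surjectivity---which you rightly call the heart of the theorem---is only a plan whose key verification you yourself flag as unresolved; as written this is not a proof. Worse, the plan cannot be completed in the form described, because it never uses the term $e$: the paper's final Example (the three-element Hilbert algebra) is finite, strongly Fregean, has all PIP-classes singletons so that clause (II) is vacuous and $\mathcal{H}(\mathbf{A})=Up(\mathsf{Cm}(\mathbf{A}))$, and there surjectivity fails ($|\mathcal{H}(\mathbf{A})|=4$ while $|A|=3$). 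That example also shows exactly where your induction breaks: take $Z=\emptyset$, so $\bigwedge Z=\mathbf{1}_{\mathbf{A}}$, and the atom $\Theta(1,a)$; in the quotient $\mathbf{A}/\Theta(1,a)$ the inductive hypothesis produces the generator $b/\Theta(1,a)$, but lifting it only gives $\bigwedge Z=\Theta(1,a)\vee\Theta(1,b)$, and no single element $w$ satisfies $\Theta(1,w)=\bigwedge Z$. So ``lifting the generator'' is not a technicality; it is precisely the point at which $e$ and clause (II) must interact, and your sketch supplies no mechanism for that. The paper instead argues directly, without induction: choose $a$ minimal with $(1,a)\in\bigwedge Z$, suppose $Z\varsubsetneq M(a)$, take $\mu$ maximal in $M(a)\backslash Z$ with class $U$; the hyperplane dichotomy forces $M(a)\cap U=U$, hence $(1,a)\in 0_{U}$; one then finds a join irreducible $\beta=\Theta(1,b)\leq\bigwedge Z$ with $I[\mu,\mu^{+}]\searrow I[\beta^{-},\beta]$, forms the element $e(a,b)$, uses minimality of $a$ to get $M(e(a,b))\nsubseteq M(a)$, picks $\gamma$ maximal in $M(e(a,b))\backslash M(a)$, and derives a contradiction from the PIP relation in the two cases $(1,b)\in 0_{W}$ and $(1,b)\notin 0_{W}$. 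You would need this argument, or a genuinely completed alternative that uses the Malcev term, to finish (2).
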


\begin{proof}
(1) From \cite[Theorem 2.10]{IdzSloWro09} to show that $\mathcal{V}%
(\mathbf{A})$ is Fregean for finite $\mathbf{A}$, it is enough to prove that
$\mathrm{HS}(\mathbf{A})$ is congruence orderable and $\mathcal{V}%
(\mathbf{A})$ is $1$-regular. Let $\mathbf{B}\in\mathrm{S}(\mathbf{A})$ and
$\Theta_{\mathbf{B}}\left(  1,a\right)  =\Theta_{\mathbf{B}}\left(
1,b\right)  $ for $a,b\in B$. Clearly, $\Theta_{\mathbf{B}}\left(  1,a\right)
\leq$ $\Theta_{\mathbf{A}}\left(  1,a\right)  \cap B^{2}$, and so $\left(
1,b\right)  \in\Theta_{\mathbf{A}}\left(  1,a\right)  $. Consequently,
$\Theta_{\mathbf{A}}\left(  1,b\right)  \leq\Theta_{\mathbf{A}}\left(
1,a\right)  $, and, analogously, $\Theta_{\mathbf{A}}\left(  1,a\right)
\leq\Theta_{\mathbf{A}}\left(  1,b\right)  $. From congruence orderability of
$\mathbf{A}$ we get $a=b$. Thus $\mathrm{S}(\mathbf{A})$ is congruence
orderable. By Proposition \ref{H(A)}, $\mathrm{HS}(\mathbf{A})$ is congruence
orderable. To show that $\mathcal{V}(\mathbf{A})$ is $1$-regular, we use
Theorem \ref{Malcev}. Since $\mathrm{H}(\mathbf{A})$ is congruence orderable
and $\mathbf{A}$ has a Malcev term, we deduce that there exists a binary term
$e$ such that $e$ is a principle congruence term in $\mathbf{A}$, i.e.,
$\Theta_{A}\left(  a,b\right)  =\Theta_{A}\left(  1,e\left(  a,b\right)
\right)  $ for all $a,b\in A$, and $(A,e)$ belongs to the variety of
equivalential algebras \cite{KabWro75}. Hence, $(C,e)$ must be an
equivalential algebra for every $\mathbf{C}\in\mathcal{V}(\mathbf{A})$. Let
$\alpha\in\mathsf{Con}\left(  \mathbf{C}\right)  \subset\mathsf{Con}\left(
(C,e)\right)  $. Thus $\left(  c,d\right)  \in\alpha$ iff $(1,e(c,d))\in
\alpha$. In consequence, $\mathcal{V}(\mathbf{A})$ is $1$-regular with a
principle congruence term $e$.

(2) By (1) $\mathbf{A}$ is strongly Fregean with a principle congruence term.
Now, from Proposition \ref{hyperplane} we know that the image of $M$ is
contained in $\mathcal{H}\left(  \mathbf{A}\right)  $. On the other hand, the
injectivity of $M$ follows from Theorem \ref{S(A)}.2. Moreover, from Corollary
\ref{e} we deduce that $M$ preserved equivalence operation. Hence it is enough
to prove that $M$ is surjective.

Let $Z\in\mathcal{H}\left(  \mathbf{A}\right)  $. Assume that $a$ is a minimal
(with respect to the natural order `$\leq$' in $A$) element in $\left\{  c\in
A:\left(  1,c\right)  \in\bigwedge Z\right\}  $. We show that $Z=M\left(
a\right)  $. Clearly, $Z\subset M\left(  a\right)  $. Suppose that
$Z\varsubsetneq M\left(  a\right)  $. Let $\mu$ be a maximal element in
$\left\{  \eta\in\mathsf{Cm}\left(  \mathbf{A}\right)  :\eta\in M\left(
a\right)  \backslash Z\right\}  $. Put $U=\mu/\!\!\sim$. Then $Z\cap U\neq U$
and $U^{+}=\left\{  \vartheta\in\mathsf{Cm}\left(  \mathbf{A}\right)
:\vartheta>\mu\right\}  \subset Z$. Hence, as $Z\in\mathcal{H}\left(
\mathbf{A}\right)  $, we know that $\overline{Z\cap U}$ is a hyperplane in
$\left(  \overline{U},\bullet\right)  $. Moreover, $Z\cap U\varsubsetneq
M\left(  a\right)  \cap U$. Then $\overline{M\left(  a\right)  \cap U}$ is not
a hyperplane, and so $M\left(  a\right)  \cap U=U$. Thus $\left(  1,a\right)
\in0_{U}$. It follows from Lemma \ref{herlem} that $\mu\notin Z$ implies
$\bigwedge Z\nleq\mu$. Since $\bigwedge Z\leq\bigwedge U^{+}=\mu^{+}$, we get
$I\left[  \mu,\mu^{+}\right]  \searrow I\left[  \bigwedge Z\wedge\mu,\bigwedge
Z\right]  $ and consequently $\operatorname*{card}I\left[  \bigwedge
Z\wedge\mu,\bigwedge Z\right]  =2$. Take a join irreducible $\beta
\in\mathsf{Con}\left(  \mathbf{A}\right)  $ such that $\beta\leq\bigwedge Z$
and $\beta\nleq\bigwedge Z\wedge\mu$. Let $\beta^{-}$ denote the unique
subcover of $\beta$ in $\mathsf{Con}\left(  \mathbf{A}\right)  $. Then
$I\left[  \bigwedge Z\wedge\mu,\bigwedge Z\right]  \searrow I[\bigwedge
Z\wedge\mu\wedge\beta,\beta]=I[\beta^{-},\beta]$. Consequently, $I\left[
\mu,\mu^{+}\right]  \searrow I\left[  \beta^{-},\beta\right]  $. From the fact
that $\mathbf{A}$ is $1$-regular and $\beta$ is join irreducible we deduce
that there exists $b\in A$ such that $\beta=\Theta_{\mathbf{A}}\left(
1,b\right)  $, and so $(1,b)\notin\mu$ and $(1,b)\in\bigwedge Z$. In
consequence, $(a,b)\notin\mu$ and $(a,b)\in\bigwedge Z$. From the fact that
$e$ is a binary principle congruence term in $\mathbf{A}$ we get
$\Theta_{\mathbf{A}}\left(  a,b\right)  =\Theta_{\mathbf{A}}\left(  1,e\left(
a,b\right)  \right)  $. Thus $\left(  1,e\left(  a,b\right)  \right)
\notin\mu$ and $\left(  1,e\left(  a,b\right)  \right)  \in\bigwedge Z$.
Hence, $M(a)\neq M\left(  e\left(  a,b\right)  \right)  $, and so from the
minimality of $a$ in $\left\{  c\in A:\left(  1,c\right)  \in\bigwedge
Z\right\}  $, we get $M\left(  e\left(  a,b\right)  \right)  \nsubseteq
M\left(  a\right)  $. Let $\gamma$ be a maximal element in $\left\{
\varphi\in\mathsf{Cm}\left(  \mathbf{A}\right)  :\varphi\in M\left(  e\left(
a,b\right)  \right)  \backslash M\left(  a\right)  \right\}  $ and
$W:=\gamma/\!\!\sim$. Then for $\upsilon\in\mathsf{Cm}\left(  \mathbf{A}%
\right)  $ we know that $\upsilon>\gamma$ implies $\upsilon\in M\left(
a\right)  $, which gives $\left(  1,a\right)  \in\gamma^{+}$, and, since
$\left(  1,e\left(  a,b\right)  \right)  \in\gamma^{+}$, we get $\left(
1,b\right)  \in\gamma^{+}$. Note that $\left(  1,a\right)  \notin0_{W}$ as
$\left(  1,a\right)  \notin\gamma$, and so $W\neq U$. Let us consider two
cases: $\left(  1,b\right)  \in0_{W}$ and $\left(  1,b\right)  \notin0_{W}$.
If $\left(  1,b\right)  \in0_{W}$, then $\left(  a,b\right)  ,\left(
1,b\right)  \in\gamma$, and we obtain $\left(  1,a\right)  \in\gamma$, a
contradiction. If $\left(  1,b\right)  \notin0_{W}$, then $\left(  1,b\right)
\in\gamma^{+}\backslash0_{W}$, and so there exists $\psi\in\mathsf{Con}\left(
\mathbf{A}\right)  $ such that $\left(  1,b\right)  \notin\psi\in W$. Then
$I\left[  \beta^{-},\beta\right]  \nearrow I\left[  \psi,\psi^{+}\right]  $
which gives $\mu\sim\psi$, and consequently, $U=W$, a contradiction.
\end{proof}

The assumption that $\mathbf{A}$ has a Malcev term cannot be dropped. If
$\mathbf{A}$ does not come from congruence permutable variety, then we can
represent congruences of the form $\Theta_{\mathbf{A}}(1,a)$ as elements of
$\mathcal{H}\left(  \mathbf{A}\right)  $, however, the latter set can be
larger than the image of $M$. To show this it is enough to consider the
following simple example from \cite{PrzSlo17}.

\begin{example}
Take $\mathbf{A}=(A,\rightarrow)$, where $A:=\{a,b,1\}$, and $\rightarrow$ is
given by $x\rightarrow y:=1$, where $x=y$, and $x\rightarrow y:=y$ where
$x\neq y$. In fact, $\mathbf{A}$ is a Hilbert algebra as the implicative
subreduct of the Boolean algebra $\mathbf{B}:=\mathbf{2}\times\mathbf{2}$,
where $\mathbf{2}$ denotes the two-element Boolean algebra, and so
$\mathcal{V}(\mathbf{A})$ is Fregean and congruence distributive. Then
$\mathsf{Cm}\left(  \mathbf{A}\right)  $ is consisted of two incomparable
congruences $\alpha:=\Theta_{\mathbf{A}}\left(  1,a\right)  $ and
$\beta:=\Theta_{\mathbf{A}}\left(  1,b\right)  $. Hence $\mathcal{H}\left(
\mathbf{A}\right)  =Up\left(  \mathsf{Cm}\left(  \mathbf{A}\right)  \right)
=\{\emptyset,\{\alpha\},\{\beta\},\{\alpha,\beta\}\}$ has four elements,
whereas $\left\vert A\right\vert =3$.
\end{example}

\end{document}